\documentclass[a4paper,UKenglish,cleveref,autoref,thm-restate]{lipics-v2021}

\hideLIPIcs  

\usepackage{tikz}
\usetikzlibrary{positioning, fit, calc, shapes.geometric}
\newcommand{\NP}{{\sf NP}}
\newcommand{\sP}{{\sf P}}
\newcommand{\ssi}{\subseteq_i}

\theoremstyle{plain}
\newtheorem{open}[theorem]{Open Problem}

\title{Optimal b-Colourings and Fall Colourings in  \texorpdfstring{$H$}{H}-Free Graphs\footnote{An extended abstract of this paper will appear in the Proceedings of WG 2026~\cite{AELMMP26WG}.}}

\titlerunning{Optimal b-Colourings and Fall Colourings in \texorpdfstring{$H$}{H}-Free Graphs}

\author{Jungho Ahn}{Department of Computer Engineering, Inha University, Incheon, South Korea}{junghoahn@inha.ac.kr}{https://orcid.org/0000-0003-0511-1976}{Supported by Leverhulme Trust Research Project Grant RPG-2024-182 and INHA University Research Grant.}

\author{Tala Eagling-Vose}{Department of Computer Science, Durham University, Durham, UK}{tala.j.eagling-vose@durham.ac.uk}{https://orcid.org/0009-0008-0346-7032}{}

\author{Felicia Lucke}{ENS Lyon, Lyon, France}{felicia.lucke@ens-lyon.fr}{https://orcid.org/0000-0002-9860-2928}{Supported by Postdoc.Mobility Grant 230578.}

\author{David Manlove}{School of Computing Science, University of Glasgow, Glasgow, UK}{david.manlove@glasgow.ac.uk}{https://orcid.org/0000-0001-6754-7308}{}

\author{Fabricio Mendoza Granada}{School of Computing Science, University of Glasgow, Glasgow, UK}{f.mendoza-granada.1@research.gla.ac.uk}{https://orcid.org/0009-0007-9123-1726}{Supported by a scholarship from the College of Science and Engineering, University of Glasgow, and Becas Carlos Antonio Lopez (BECAL), Paraguay.}

\author{Dani\"el Paulusma}{Department of Computer Science, Durham University, Durham, UK}{daniel.paulusma@durham.ac.uk}{https://orcid.org/0000-0001-5945-9287}{Supported by Leverhulme Trust Research Project Grant RPG-2024-182.}

\authorrunning{J. Ahn, T. Eagling-Vose, F. Lucke, D. Manlove, F. Mendoza, D. Paulusma} 

\Copyright{Jungho Ahn, Tala Eagling-Vose, Felicia Lucke, David Manlove, Fabricio Mendoza, Dani\"el Paulusma}

\ccsdesc[500]{Mathematics of computing~Graph theory}
\ccsdesc[500]{Theory of computation~Graph algorithms analysis}
\ccsdesc[500]{Theory of computation~Problems, reductions and completeness}

\keywords{b-chromatic number, tight graph, fall achromatic number, fall chromatic number, \texorpdfstring{$H$}{H}-free graph} 

\acknowledgements{The authors would like to thank the reviewers for their valuable comments.}

\nolinenumbers

\begin{document}

\maketitle

\begin{abstract}
In a colouring of a graph, a vertex is 
\emph{b-chromatic} if it is adjacent to a vertex of every other colour.  We consider
four well-studied colouring problems:
{\sc b-Chromatic Number}, {\sc Tight b-Chromatic Number},
{\sc Fall Chromatic Number} and {\sc Fall Achromatic Number},
which fit into a framework based on  whether every colour class has (i) at least one b-chromatic vertex, (ii) exactly one b-chromatic vertex, or (iii) all of its vertices being b-chromatic.
By combining known and new results, we fully classify the computational complexity of {\sc b-Chromatic Number}, 
{\sc Fall Chromatic Number} and {\sc Fall Achromatic Number}
in $H$-free graphs.  
For {\sc Tight b-Chromatic Number} in $H$-free graphs, we develop a general technique to determine
new graphs~$H$, for which the problem is polynomial-time solvable, and we
also determine new graphs $H$, for which the problem is still \NP-complete.  
We show, for the first time, the existence of a graph~$H$ such that
in $H$-free graphs, {\sc b-Chromatic Number} is \NP-hard, while {\sc Tight b-Chromatic Number} is polynomial-time solvable.
\end{abstract}

\section{Introduction}\label{s-intro}
Colouring is an extensively studied area of graph theory, mostly focussing on minimising the number of colours used~\cite{JT95,Lewis21}.
Namely, a \emph{colouring} in a graph $G$ is a mapping $c:V\to \mathbb{Z}^+$ such that $c(u)\neq c(v)$ for every edge $uv\in E$. If $|c(V)|=k$, then $c$ is a {\it $k$-colouring}, and the {\it chromatic number} is the smallest $k$ such that $G$ has a $k$-colouring.
However, there have also been well-studied graph colouring concepts that involve maximising the number of colours $k$ subject to additional conditions, such as the achromatic number~\cite{kortsarz2007improved}, {b-chromatic number}~\cite{IM99,jakovac2018b}, Grundy number~\cite{bonnet2018complexity} and {fall achromatic number}~\cite{dunbar2000fall}.
In particular, the \emph{b-chromatic number} of a graph $G$ is the maximum integer $k$ for which $G$ admits a $k$-colouring where every colour $i$ has a vertex $v_i$ of colour $i$ that is adjacent to a vertex of every colour $j\neq i$.
Such a vertex~$v_i$ is called
\emph{b-chromatic},
and a colouring where every colour class has a b-chromatic vertex is called a \emph{b-colouring}.
See Figure~\ref{f-examples} for an example of a b-colouring.

\begin{figure}[t]
    \centering
    \begin{minipage}{0.4\textwidth}
        \centering
        \includegraphics[width=.75\textwidth]{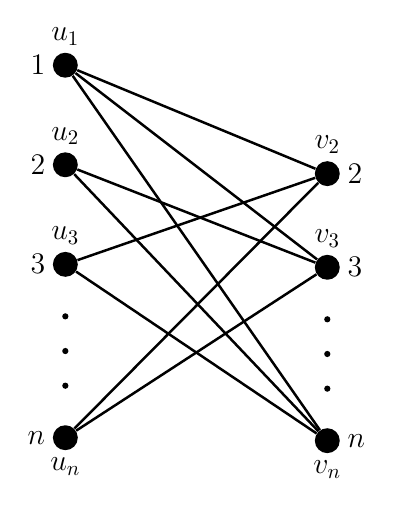}
    \end{minipage}
    \hfill
    \begin{minipage}{0.4\textwidth}
        \centering
        \includegraphics[width=.75\textwidth]{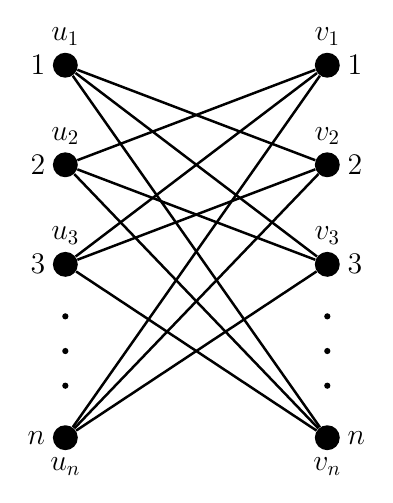}
    \end{minipage}
    \caption{An example of a b-colouring of a $(2n-1)$-vertex complete bipartite graph minus a matching of size $n-1$ (left) with $n$ colours that is not a fall colouring, and an example of a fall colouring of a $2n$-vertex complete bipartite graph minus a perfect matching (right) with $n$ colours.}\label{f-examples}
\end{figure}

The b-chromatic number was introduced by Irving and Manlove in 1999~\cite{IM99} as the worst- case behaviour of a natural heuristic for approximating the chromatic number of a graph $G$.
Starting with an arbitrary colouring, we iteratively try to improve it:
at each step, pick a colour $i$ and try
to reduce the number of colours by recolouring
each vertex $v$ of colour $i$ by a colour of $G$ that is not in $v$'s neighbourhood.  If such a recolouring is not possible for colour $i$, try another colour.  The process terminates when every colour has a b-chromatic vertex, and also shows that every graph has a b-colouring.  Other applications of the b-chromatic number arise in
graph clustering~\cite{effantin2006distributed}, postal mail sorting~\cite{gaceb2008improvement} and analysing hospital stays~\cite{elghazel2006new}.

In this paper we study colourings with b-chromatic vertices: in particular, we consider the three natural cases where each colour class has (i) at least one b-chromatic vertex, (ii) exactly one b-chromatic vertex, and (iii) all of its vertices being b-chromatic. From Case (i) we derive the {\sc b-Chromatic Number} problem, which is the problem of finding the b-chromatic number of a given graph. Case (ii) gives rise to
the {\sc Tight b-Chromatic Number}~\cite{HSS12,KM19,LS09}, which is the restriction of
{\sc b-Chromatic Number} to instances where the target number of colours is the maximum degree of the input graph and all maximum-degree vertices must become b-chromatic.
Case (iii) is precisely the definition of a \emph{fall colouring} of a graph~\cite{dunbar2000fall}.

A fall colouring is a b-colouring, but the converse is not necessarily true; see Figure~\ref{f-examples} for an example.
In fact, many graphs have no fall colouring; an example being the \emph{paw}, the triangle with a pendant vertex.
The problems {\sc Fall Chromatic Number} and {\sc Fall Achromatic Number}
are to find
the minimum and maximum integers $k$ for which a given graph $G$ admits a fall colouring with $k$ colours (or $0$ if $G$ has no fall colouring).

\medskip
\noindent
{\bf Our focus.}
Forbidden subgraphs have been widely studied from both a structural and algorithmic perspective~\cite[Chapter 7]{BLS99}, and in the area of graph colouring in particular~\cite{GJPS17}.
Given two graphs $G$ and $H$, we say that~$G$ is \emph{$H$-free} if $G$ does not contain $H$ as an \emph{induced} subgraph.
To increase our understanding of an \NP-hard problem~$\Pi$, we may aim to identify graphs $H$ for which $\Pi$ is  polynomial-time solvable or \NP-hard in $H$-free graphs, ideally
to obtain a complete dichotomy for the computational complexity of $\Pi$ in $H$-free graphs.
This has been done for several well-known colouring problems, such as
{\sc Chromatic Number}~\cite{KKTW01}, {\sc Precolouring Extension}~\cite{GPS14}, {\sc $\ell$-List Colouring}~\cite{GPS14},
and apart from one missing case, for {\sc Acyclic Colouring}, {\sc Star Colouring} and {\sc Injective Colouring}~\cite{BJMOPS25}.
We continue this line of study:

\medskip
\noindent
{\it Can we classify the complexity of
{\sc b-Chromatic Number}, {\sc Tight b-Chromatic Number}, {\sc Fall Chromatic Number} and {\sc Fall Achromatic Number} in $H$-free graphs?}

\medskip
\noindent
As we shall see, our systematic approach will allow us to unify and extend known results, and to
identify general patterns of complexity behaviour and new interesting open cases.

\medskip
\noindent
{\bf Terminology and bounds.}
Before discussing known complexity
results and our new results, we must
first introduce some new
terminology and basic observations.
A fundamental upper bound for the b-chromatic number involves the \emph{$m$-degree} of a graph $G$~\cite{IM99}, denoted by $m(G)$, which is the largest integer~$k$ such that $G$ has at least $k$ vertices with degree at least $k-1$.
Let $\chi(G)$ and $\varphi(G)$ denote the chromatic and b-chromatic number, respectively, of $G$. Then we observe:

\begin{observation}[\cite{IM99}]\label{o-0}
For a graph $G$, it holds that
$\chi(G)\leq \varphi(G)\leq m(G)$.
\end{observation}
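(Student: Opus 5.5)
The plan is to prove the two inequalities separately; both follow directly from the definitions.

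For the lower bound $\chi(G)\le\varphi(G)$, I will use that the b-chromatic number is defined as a maximum over b-colourings, so it suffices to show that $G$ has a b-colouring with at least $\chi(G)$ colours. I take a $\chi(G)$-colouring $c$ and claim it is already a b-colouring. Suppose some colour $i$ has no b-chromatic vertex. Then every vertex $v$ of colour $i$ misses some colour $j_v\neq i$ in its neighbourhood. I recolour each such $v$ with $j_v$. Since colour class $i$ is independent, recoloured vertices are pairwise non-adjacent. Each recoloured vertex avoids the colours of its original neighbours, and no neighbour of $v$ was itself recoloured. Hence the result is a proper colouring with $\chi(G)-1$ colours, a contradiction. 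This is exactly the recolouring heuristic described in the introduction. (Alternatively, one can start from any colouring and apply the heuristic until it terminates, but the direct contradiction argument is cleaner.)

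For the upper bound $\varphi(G)\le m(G)$, I take a b-colouring with $k=\varphi(G)$ colours and choose a b-chromatic vertex $v_i$ in each colour class $i$. These $k$ vertices are distinct because they have distinct colours. Each $v_i$ has a neighbour of each of the other $k-1$ colours, so $\deg(v_i)\ge k-1$. Thus $G$ has at least $k$ vertices of degree at least $k-1$, and the definition of $m(G)$ as the largest such $k$ gives $k\le m(G)$.

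I expect no real obstacle. The only point needing care is in the first part: checking that the simultaneous recolouring of the whole colour class $i$ stays proper. This rests on the class being an independent set, so recolouring one vertex of colour $i$ never changes the neighbourhood colours that another recoloured vertex must avoid.
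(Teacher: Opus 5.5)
Your proof is correct and is the standard argument behind this observation. The paper cites it from Irving and Manlove without proof. Your lower bound is the recolouring heuristic from the introduction applied to a minimum colouring. Your upper bound takes one b-chromatic vertex per colour class, each of degree at least $k-1$, and checks this against the definition of $m(G)$.
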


\noindent
A vertex of $G$ with degree at least $m(G)-1$ is said to be {\it dense}, and $G$ is {\it tight}~\cite{HSS12,LS09} (referred to as \emph{$m$-tight} in~\cite{LS09}; see also~\cite{KZ17,LC13}) if $G$ has exactly $m(G)$ dense vertices, each of which has degree (exactly) $m(G)-1$. For example, the $4$-vertex cycle $C_4$ is not tight, but the triangle $C_3$ is tight (both graphs have $m$-degree $3$). This leads to the {\sc Tight b-Chromatic Number} problem,
formally introduced by Havet, Sales and Sampaio~\cite{HSS12},
which is to decide if a tight graph $G$ has a {\it tight} $b$-colouring, which is a $b$-colouring with $m(G)$ colours.

The \emph{fall spectrum} $\mathcal{F}(G)$ of a graph $G$ (also called the {\it fall set} of $G$) is the set of integers~$k$ for which $G$ admits a \emph{fall $k$-colouring}, which is a fall colouring with exactly $k$ colours. The {\it fall chromatic number}, denoted by
$\chi_f(G)$, is the smallest integer in $\mathcal{F}(G)$.
The {\it fall achromatic number}, denoted by $\varphi_f(G)$, is the largest integer in $\mathcal{F}(G)$.
We make the following known observation (see e.g.~\cite{Si19}):

\begin{observation}\label{o-known}
For a graph $G$ with $\mathcal{F}(G)\neq \emptyset$, it holds that $\chi(G)\leq \chi_f(G)\leq \varphi_f(G)\leq \delta(G)+1$, where $\delta(G)$ is the minimum degree of a vertex in $G$.
\end{observation}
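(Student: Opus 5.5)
The statement is a chain of three inequalities, and I will prove them separately, each straight from the definitions. Since $\mathcal{F}(G)\neq\emptyset$, both $\chi_f(G)$ and $\varphi_f(G)$ are well defined as the minimum and maximum of a nonempty finite set of integers. The set is finite because any fall $k$-colouring satisfies $k\le |V(G)|$.

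\emph{Outer-left and middle inequalities.} Every fall $k$-colouring is in particular a proper $k$-colouring of $G$. Hence every $k\in\mathcal{F}(G)$ satisfies $k\ge\chi(G)$, and taking $k=\chi_f(G)$ gives $\chi(G)\le\chi_f(G)$. The inequality $\chi_f(G)\le\varphi_f(G)$ holds because the minimum of a nonempty set is at most its maximum.

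\emph{Upper bound.} This is the only step with content. Let $c$ be a fall $k$-colouring with $k=\varphi_f(G)$, and let $v$ be a vertex of minimum degree $\delta(G)$. By the definition of a fall colouring, every vertex is b-chromatic, so $v$ in particular is b-chromatic. Thus $v$ has a neighbour of each of the $k-1$ colours different from $c(v)$. These neighbours are pairwise distinct, since they carry different colours, so $\deg(v)\ge k-1$. This gives $\varphi_f(G)=k\le\delta(G)+1$.

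I do not expect a real obstacle. The only subtle point is that the upper bound must be applied at a vertex of minimum degree, and this is allowed precisely because in a fall colouring all vertices are b-chromatic, not just one per colour class. This is also what distinguishes the bound from the $m(G)$ bound in Observation~\ref{o-0}.
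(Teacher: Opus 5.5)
Your proof is correct. The paper gives no proof of its own for this observation: it states it as known and cites Silva. Your argument is the standard reasoning behind it: every fall colouring is proper, which gives the lower bound, and a minimum-degree vertex must itself be b-chromatic, which gives $\varphi_f(G)\le\delta(G)+1$.
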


\subsection{Known Complexity Results}\label{s-known}

The problems {\sc b-Chromatic Number}, {\sc Fall Chromatic Number} and {\sc Fall Achromatic Number} are all \NP-hard.
For {\sc b-Chromatic Number}, this was originally shown in~\cite{IM99}.
Later, it was proven that {\sc Tight b-Chromatic Number}, and thus {\sc b-Chromatic Number},
are \NP-hard for line graphs~\cite{CLMSSS15}\footnote{Campos et al.~\cite{CLMSSS15} state this result for
{\sc b-Chromatic Number}, but as we explain in Section~\ref{s-tight} their proof holds in fact for {\sc Tight b-Chromatic Number}.}
bipartite graphs~\cite{KTV02,Ma98} and for chordal graphs, or more precisely, disjoint unions of a connected split graph and three stars~\cite{HSS12}. In addition, {\sc b-Chromatic Number} is also \NP-hard for co-bipartite graphs~\cite{BSSV15}.
In contrast, {\sc Tight b-Chromatic Number} is polynomial-time solvable for co-bipartite graphs, block graphs and $P_4$-sparse graphs~\cite{HSS12}. The last result was known for {\sc b-Chromatic Number}~\cite{BDMMV09}, but tight $P_4$-sparse graphs allow for a faster algorithm~\cite{HSS12}. Later, it was shown that {\sc b-Chromatic Number} is polynomial-time solvable for graphs of girth at least~$7$~\cite{CLS15},
claw-free block graphs~\cite{CS18},
and for a superclass of
$P_4$-sparse graphs: $P_4$-tidy graphs~\cite{VBK11}. We note that $P_4$-tidy graphs have bounded clique-width~\cite{CMR00}. More recently,
Jaffke, Lima and Lokshtanov~\cite{JLL24} proved that
{\sc b-Chromatic Number} is polynomial-time solvable for every graph class of bounded clique-width~\cite{JLL24}, by showing that this holds even for {\sc b-Colouring}, which is the problem to decide if a graph has a b-colouring with exactly $k$ colours for some given integer~$k$.
Jaffke and Lima~\cite{JL20} established a range of complexity results for {\sc b-Colouring}.

Jaffke, Lima and Lokshtanov~\cite{JLL24} also proved that {\sc Fall Colouring}, which is to decide if a graph has a fall $k$-colouring for some given integer~$k$, is polynomial-time solvable for graphs of bounded clique-width. Consequently, the same holds for {\sc Fall Chromatic Number} and {\sc Fall Achromatic Number}. Their
result generalised results of
Silva~\cite{Si19} for $P_4$-sparse graphs
and Mitillos~\cite{Mi16} for threshold graphs.
The {\sc Fall Colouring} problem, and therefore {\sc Fall Chromatic Number} and {\sc Fall Achromatic Number}, can also be solved in polynomial time for split graphs~\cite{Mi16} and strongly chordal graphs~\cite{LDL05}. It was shown that for a graph~$G$
that is threshold, split or strong chordal, $G$ has a fall colouring if and only if $\chi(G)=\delta(G)+1$, and thus by Observation~\ref{o-known}, ${\cal F}(G)=\{\chi(G)\}$, or equivalently, $\chi_f(G)=\varphi_f(G)=\chi(G)$. We say that graphs whose fall spectrum consists of a singleton are {\it fall-unique}.  Silva~\cite{Si19} proved that chordal graphs with a fall colouring are fall-unique, but that
deciding non-emptiness of the fall spectrum is \NP-complete for chordal graphs.

If the number of colours $k$ is a fixed constant, then we obtain {\sc Fall $k$-Colouring}.
Already in 1998, Heggernes and Telle~\cite{HT98} proved that for every fixed $k\geq 3$, {\sc Fall $k$-Colouring} is \NP-complete. This was well before the notion of a fall colouring was introduced by Dunbar et al.~\cite{dunbar2000fall}. The reason is that a graph $G=(V,E)$ has a fall $k$-colouring if and only if $V$ can be partitioned into $k$
independent dominating sets (or equivalently, $k$
maximal independent sets). As such, {\sc Fall $k$-Colouring} belongs to the widely studied framework of locally checkable vertex partitioning problems introduced by Telle~\cite{Te93,Te94}.
Subsequently, Laskar and Lyle~\cite{LL09} proved that {\sc Fall $3$-Colouring} is \NP-complete for bipartite graphs, which was improved to planar bipartite graphs by
Lauri and Mitillos~\cite{LM20}. The latter authors~\cite{LM20} also proved that for every $k\geq 3$, {\sc Fall $k$-Colouring} is \NP-complete for $(2k-2)$-regular line graphs.\footnote{Theorem~15 in~\cite{LM20} is stated only for $(2k-2)$-regular graphs, but holds in fact for $(2k-2)$-regular line graphs, as explicitly stated in the first paragraph of the proof.}
As we will show later, all the above hardness results can be modified to hold for {\sc Fall Chromatic Number} and {\sc Fall Achromatic Number} as well by a simple trick.

\subsection{Our Results}\label{s-ours}
In our theorems, we let $G_1+G_2=(V(G_1)\cup V(G_2),E(G_1)\cup E(G_2))$ denote the  disjoint union of two
vertex-disjoint graphs, and we let $rG$ denote the disjoint union of $r$ copies of~$G$.
We write $G_1\ssi G_2$ if $G_1$ is an induced subgraph of $G_2$, and let $P_r$ be the path on $r$ vertices.

We can now state a dichotomy for {\sc b-Chromatic Number} in $H$-free graphs. We obtain this dichotomy
using
the aforementioned results from~\cite{BDMMV09,BSSV15,KTV02,Ma98}
after observing that the hardness gadget of Bonomo et al.~\cite{BSSV15} is not only co-bipartite but also $2P_2$-free;
see Section~\ref{s-b} for the details.

\begin{restatable}{theorem}{tb}\label{t-b}
For a graph~$H$, {\sc b-Chromatic Number} in $H$-free graphs is polynomial-time solvable if $H\ssi P_4$, and \NP-hard otherwise.
\end{restatable}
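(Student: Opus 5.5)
The proof splits into a polynomial part and a hardness part, and both rest almost entirely on results quoted in Section~\ref{s-known}.

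\textbf{Polynomial part.} If $H\ssi P_4$, then every $H$-free graph is $P_4$-free, that is, a cograph. Cographs are $P_4$-sparse, so the algorithm of~\cite{BDMMV09} applies. (Cographs also have clique-width at most~$2$, so~\cite{JLL24} would do as well.)

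\textbf{Hardness part.} Let $H$ be a graph with $H\not\ssi P_4$. I would split into cases according to the structure of $H$, and in each case exhibit a known \NP-hard graph class consisting of $H$-free graphs.

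\emph{Case 1: $H$ contains a cycle.} Bipartite graphs are \NP-hard~\cite{KTV02,Ma98}, which covers every $H$ with an odd cycle. For even cycles this class does not suffice, so I would instead use the chordal hardness of~\cite{HSS12}, which covers every $H$ with an induced cycle of length at least~$4$. What remains is $H$ containing a triangle, and there bipartite graphs apply. So if $H$ contains any cycle, $H$-free graphs contain either all bipartite graphs or all chordal graphs, and the problem is \NP-hard.

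\emph{Case 2: $H$ is a forest.} Here I use co-bipartite graphs. A co-bipartite graph has independence number at most~$2$, so it contains no $3K_1$, and hence no $K_{1,3}$ and no $P_5$. A forest without induced $3K_1$ has at most two components, and each component is a path on at most $4$ vertices (a tree on at least $5$ vertices has three pairwise non-adjacent vertices). A forest that is not an induced subgraph of $P_4$ and avoids $3K_1$ must then contain $2P_2$ as an induced subgraph; for instance $P_1+P_3$ already contains $3K_1$. Co-bipartite graphs alone do not exclude $2P_2$. This is why the key new step is needed: check that the gadget of Bonomo et al.~\cite{BSSV15} is both co-bipartite and $2P_2$-free. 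Given that, $H$-free graphs contain an \NP-hard class whenever $H$ contains $3K_1$ or $2P_2$. Every forest $H\not\ssi P_4$ contains one of these: if $H$ has at least three components, or contains a vertex of degree at least~$3$, or an induced $P_5$, it contains $3K_1$; otherwise $H$ is a linear forest, and such a forest that is not induced in $P_4$ contains $3K_1$ or $2P_2$.

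\textbf{Main obstacle.} The difficult step is verifying $2P_2$-freeness of the reduction graph in~\cite{BSSV15}. I would inspect that construction, which should be two cliques with edges between them encoding the source instance. I would then argue that for any two disjoint cross edges, their endpoints induce an extra edge. This could be because the cross adjacencies form a threshold-like or nested neighbourhood structure. If the original gadget fails this, I would try to modify it (for example, complementing suitable parts) while preserving the b-chromatic value.
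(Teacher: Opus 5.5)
Your decomposition matches the paper's: bipartite graphs for an induced $C_3$, the chordal instances of \cite{HSS12} for an induced $C_r$ with $r\ge 4$, and, for forests, the observation that every forest $H\not\ssi P_4$ contains $3P_1$ or $2P_2$. Everything therefore reduces to showing that the co-bipartite instances of \cite{BSSV15} are also $2P_2$-free. That is the one non-trivial step of the theorem, and you leave it unproved.

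Moreover, the check you propose cannot establish it. In a co-bipartite graph with cliques $A$ and $B$, two disjoint cross edges $a_1b_1,a_2b_2$ are never an induced $2P_2$, because $a_1a_2$ is an edge. So your criterion holds in every co-bipartite graph and says nothing. Nested cross neighbourhoods do not help either. Two vertices in $A$ and two in $B$ with no cross edges at all have trivially nested neighbourhoods, yet they induce $2P_2$. In fact, an induced $2P_2$ in a co-bipartite graph must consist of an edge $a_1a_2$ inside $A$ and an edge $b_1b_2$ inside $B$, with all four pairs $a_ib_j$ non-adjacent. What you need is that any two vertices of $A$ have at most one common non-neighbour in $B$.

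The missing idea is to work in the complement, using $\overline{2P_2}=C_4$. The instance of \cite{BSSV15} is $\overline{H}$, where $H=\bigcup_{uv\in E(G)}H_{uv}$ for a bipartite graph $G$. So it suffices to show that $H$ is bipartite and $C_4$-free, which is exactly how the paper argues. This is straightforward to check:
\begin{itemize}
\item From the listed edge set, each gadget $H_{uv}$ is a tree on ten vertices in which $u$ and $v$ have degree~$1$ and are at distance~$3$.
\item Distinct gadgets share only vertices of $G$.
\item Hence every cycle of $H$ is obtained from a cycle of $G$ by replacing each edge with a path of length~$3$.
\end{itemize}
All cycles of $H$ are therefore even and have length at least~$12$. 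No modification of the gadget is needed, but without this verification your proposal does not prove the theorem.
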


\noindent
It is well known that for a graph $H$, the class of $H$-free graphs has bounded clique-width if and only if $H\ssi P_4$ (see e.g.~\cite{DP16}). Hence, an alternative formulation of Theorem~\ref{t-b} is that subject to  $\sP\neq \NP$, {\sc b-Chromatic Number} is polynomial-time solvable in $H$-free graphs if and only if the class of $H$-free graphs has bounded clique-width.  Our next result shows that for tight graphs this correspondence with clique-width no longer holds. We prove it in Section~\ref{s-tight} by combining previous results from~\cite{BDMMV09,CLMSSS15,HSS12,KTV02,Ma98} with two new polynomial-time algorithms and two new \NP-completeness results.

\begin{restatable}{theorem}{ttight}\label{t-tight}
For a graph~$H$, {\sc Tight b-Chromatic Number} in $H$-free graphs~is
\begin{itemize}
\item polynomial-time solvable if $H\ssi P_4$,  $H\ssi P_3+P_1$ or $H\ssi 2P_2+P_1$
and
\item \NP-complete if $H$ is not a linear forest, or $P_5\ssi H$, $2P_3\ssi H$ or $3P_2\ssi H$.
\end{itemize}
\end{restatable}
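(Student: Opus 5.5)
The proof splits into the polynomial part and the hardness part, each assembled from known results plus new contributions.

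\textbf{Polynomial cases.} The case $H\ssi P_4$ follows from the algorithm for $P_4$-sparse graphs~\cite{BDMMV09,HSS12}, or from Theorem~\ref{t-b}. The two new cases are $H\ssi P_3+P_1$ and $H\ssi 2P_2+P_1$, and for them I would use the tightness of the input. Let $m=m(G)$ and let $D$ be the set of exactly $m$ dense vertices, each of degree $m-1$. In a tight b-colouring every colour class needs a b-chromatic vertex. Only dense vertices can be b-chromatic, and there are exactly $m$ of them, so the colouring must give $D$ $m$ distinct colours and make all of $D$ b-chromatic. Each $v\in D$ then has exactly $m-1$ neighbours, and these must receive exactly the $m-1$ colours other than $c(v)$. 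So $c$ restricted to $N[v]$ is a bijection onto $\{1,\dots,m\}$ for every $v\in D$.

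The general technique I would develop is a structural one.
\begin{itemize}
\item In a $(P_3+P_1)$-free or $(2P_2+P_1)$-free graph, the graph induced by $D$ together with the non-dense vertices is very restricted. Any non-neighbour $w$ of a vertex $v$ forces $N(w)$ to be a clique-like or complete-multipartite piece, since the non-neighbourhood of a vertex is $P_3$-free (respectively $2P_2$-free).
\item I would then branch on $O(1)$ or polynomially many choices: which vertices of $D$ share neighbourhood colours, and a guess of a bounded number of vertices that break symmetry.
\item The remaining constraints (bijectivity on each closed neighbourhood of a dense vertex, and properness) then reduce to a bipartite matching or $2$-SAT instance.
\item For $P_3+P_1$ one can alternatively argue that each non-neighbourhood is a disjoint union of cliques, so the complement is nearly a cograph; this gives bounded clique-width after deleting a few vertices, and~\cite{JLL24} applies.
\end{itemize}
The main obstacle is the $2P_2+P_1$ case. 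Here I would first try the complement: $G$ is $(2P_2+P_1)$-free exactly when the complement is $(C_4\vee K_1)$-free. Tightness forces $D$ to be large relative to the remaining vertices, and I would use this to show that only boundedly many non-dense vertex types arise.

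\textbf{Hardness cases.}
\begin{itemize}
\item If $H$ is not a linear forest, it contains a cycle or a claw. The claw case is covered by the line-graph hardness of~\cite{CLMSSS15}, whose proof I would check to be a reduction producing tight instances.
\item For a cycle, I would take the bipartite hardness~\cite{KTV02,Ma98} to handle odd cycles. For even cycles, triangles and short cycles, I would use the chordal hardness of~\cite{HSS12}, namely a connected split graph plus three stars, which is $C_r$-free for all $r\ge4$. Together these cover all cycles, provided each construction yields tight graphs.
\item For $P_5\ssi H$, $2P_3\ssi H$ and $3P_2\ssi H$, it suffices to prove NP-completeness for $P_5$-free, $2P_3$-free and $3P_2$-free graphs, the two new hardness results besides the existing ones. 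I would reuse the split graph plus three stars gadget of~\cite{HSS12}. A split graph is $2P_3$-free in its connected part, but the added stars create induced $2P_3$, $3P_2$ and $P_5$. So I would modify the construction, replacing the stars by small cliques or pendant structures attached to the clique, so that dense vertices keep degree $m-1$. The new graph should be split-like and therefore $P_5$-free, $2P_3$-free and $3P_2$-free.
\end{itemize}
Verifying that tightness and the equivalence of the reduction survive this modification is the second main difficulty.
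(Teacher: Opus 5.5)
\textbf{Verdict: there are genuine gaps.} The main one is that the two new polynomial cases are never actually solved. Your opening observation is correct and is also the paper's starting point: in a tight b-colouring, each dense $v$ sees all other colours exactly once on $N(v)$.

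\textbf{The polynomial cases.} For $H\ssi 2P_2+P_1$ and $H\ssi P_3+P_1$ you announce a strategy (branching, then matching or 2-SAT), but not the structural fact that would make any such reduction correct. The one concrete route you give fails. Co-bipartite graphs are $3P_1$-free, hence $(P_3+P_1)$-free, and have unbounded clique-width. Deleting boundedly many vertices cannot repair this, so a bounded-clique-width algorithm cannot be applied. In fact all the new polynomial cases are classes of unbounded clique-width.

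The missing idea for $2P_2+P_1$ is as follows. Let $T$ be the dense set and $S=\delta T$. For non-adjacent $u\in T$, $s\in S$, let $T(u,s)=(N(s)\cap T)\setminus N(u)$.
\begin{itemize}
\item In every tight b-colouring, if the class of $u$ has at least three vertices and contains $s\in S$, then $T(u,s)$ is complete to $T\setminus(T(u,s)\cup\{u\})$; otherwise an induced $2P_2+P_1$ appears.
\item Conversely, a non-empty $T(u,s)$ with this property forces $s$ to take $u$'s colour. This yields a unique forced partial colouring of $T\cup S'$, which must satisfy the local b-conditions (otherwise the answer is no).
\item Let $T'$ be the dense vertices whose colour already occurs on $S'$, and $T_2$ those that neither dominate $G[T]$ nor lie in $T'$. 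Every unforced $s\in S\setminus S'$ must share its colour, within $T\cup\delta T$, with exactly one vertex, and that vertex lies in $T_2$.
\item Existence is then equivalent to a perfect matching between $T_2$ and $S\setminus S'$. The bipartite graph joins non-adjacent pairs $(u,s)$ with no common neighbour in $(T_2\cup T')\setminus\{u\}$.
\item Vertices outside $T\cup\delta T$ have degree at most $m-2$ and are coloured greedily.
\end{itemize}

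For $P_3+P_1$ you need the co-component decomposition, which you do not mention:
\begin{itemize}
\item each co-component $G_i$ must itself be tight with dense set $T\cap V(G_i)$, otherwise the answer is no;
\item $G$ is a yes-instance if and only if every $G_i$ is;
\item by Olariu's theorem, each $G_i$ is either $3P_1$-free (so the $2P_2+P_1$ algorithm applies) or a disjoint union of cliques (so $P_4$-free).
\end{itemize}

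\textbf{The hardness cases.} The claw and cycle cases agree with the paper. The triangle has to come from the bipartite result, since your chordal construction contains triangles.

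Your claim that the three stars of the Havet--Sales--Sampaio graph create an induced $P_5$ is wrong. Every component is split (hence $2P_2$-free) or a star (hence $P_4$-free), and $P_5$ is connected. So that graph is already $P_5$-free and settles this case as it stands.

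For $2P_3$ and $3P_2$, the plan to make the graph ``split-like and therefore'' free of these cannot work if split means split. Split graphs are $2P_2$-free, and the first item of the very theorem you are proving makes the problem polynomial on $2P_2$-free graphs. Such a reduction would therefore give $\sP=\NP$. The modified graphs must contain induced $2P_2$'s and need their own freeness arguments. The paper uses two different modifications.
\begin{itemize}
\item For $3P_2$: shrink each star to $n$ leaves and add a triangle on the three centres. Then every edge meets one of two cliques (the original-vertex clique $V$ or the set of centres), and pigeonhole excludes $3P_2$.
\item For $2P_3$: replace the stars by cliques $A,B,C$ with $|A|=|E|+1$, $|B|=3$, $|C|=n$, completely joined along $V$--$A$--$B$--$C$--$E$. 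The dense set is then exactly $V\cup A\cup B$, and each vertex of $V$ must pick up the three colours of $B$ from its three edge-neighbours. $2P_3$-freeness follows because $V\cup A$ is a clique and $C$ dominates the split graph induced by $B\cup C\cup E$.
\end{itemize}
These constructions, together with checking tightness and the equivalence with $3$-edge colouring, are exactly what your proposal leaves open.
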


\medskip
\noindent
Theorem~\ref{t-tight} shows, together with Theorem~\ref{t-b}, that there exist graphs $H$, namely
$H=P_3+P_1$, $H=2P_2$, $H=2P_2+P_1$,
$H=P_2+2P_1$ and $H=3P_1$, for which {\sc b-Chromatic Number} in $H$-free graphs is \NP-hard, while {\sc Tight b-Chromatic Number} is polynomial-time solvable. This difference in complexity for $H$-free graphs
was not known before. All the new polynomial-time cases in Theorem~\ref{t-tight} are for graph classes of unbounded clique-width. They generalise the aforementioned result for co-bipartite graphs~\cite{HSS12}, which are $3P_1$-free.
See Section~\ref{s-con} for a discussion on the missing cases.

Our final result, Theorem~\ref{t-fall}, shown in Section~\ref{s-fall}, consists of two coinciding dichotomies for fall colourings, which differ from the dichotomy in Theorem~\ref{t-b} and the partial dichotomy in Theorem~\ref{t-tight}. To prove it, we combine
the above hardness results from~\cite{LL09,LM20,Si19} with a new hardness result  for $(C_5,2P_2,P_2+2P_1,4P_1)$-free graphs and a new polynomial-time solvability result for $(P_3+P_1)$-free graphs.

\begin{restatable}{theorem}{tfall}\label{t-fall}
For a graph~$H$, {\sc Fall Chromatic Number} and {\sc Fall Achromatic Number} in $H$-free graphs are polynomial-time solvable if $H\ssi P_4$ or $P_3+P_1$, and \NP-hard otherwise.
\end{restatable}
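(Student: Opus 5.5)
Following the paper's description, I would split the proof into a polynomial part and a hardness part, tied together by a standard case analysis on $H$.

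\textbf{Polynomial cases.} If $H\ssi P_4$, then $H$-free graphs have bounded clique-width. The result of Jaffke, Lima and Lokshtanov~\cite{JLL24} solves {\sc Fall Colouring} for every $k$, so both problems follow by trying $k=1,\dots,\delta(G)+1$ (Observation~\ref{o-known}).

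For $H=P_3+P_1$, I would give a structural algorithm. A $(P_3+P_1)$-free graph $G$ is either disconnected or has a disconnected complement, or else every non-neighbourhood $V\setminus N[v]$ is $P_3$-free, i.e.\ a disjoint union of cliques.

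Fall colourings behave well under these operations:
\begin{itemize}
\item Under disjoint union the fall spectrum is the intersection of the spectra of the components.
\item Under join, colour classes cannot straddle the two sides, so the spectrum is the sumset $\{a+b\}$ of the two spectra.
\end{itemize}
These two rules reduce the problem to the connected, co-connected case. If $G$ contains no $3P_1$ it is the complement of a triangle-free graph. Colour classes then have size at most $2$, and a fall colouring is a partition into edges and vertices of $\overline{G}$ in which each pair class dominates. This can be handled by a matching-type argument in $\overline G$: the classes of size $2$ are non-edges $uv$ of $G$ such that no third vertex is non-adjacent to both $u$ and $v$.

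Otherwise I would exploit that, for a vertex $v$, the colour class of $v$ is a maximal independent set containing $v$. Every other vertex of $V\setminus N[v]$ lies in a clique component of $G-N[v]$, so the class of $v$ picks exactly one vertex from each such clique (or is forced by domination). This should make the number of candidate classes polynomial and yield an argument that the spectrum is small and computable, possibly showing these graphs are fall-unique or nearly so.

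\textbf{Hardness.} If $H$ is not $\ssi P_4$ and not $\ssi P_3+P_1$, then $H$ contains a cycle, or a claw or other non-linear-forest subgraph, or $H$ is a linear forest containing $C_5$-free obstructions, namely $2P_2$, $P_2+2P_1$ or $4P_1$, or $P_5$, and so on. More precisely, a linear forest not contained in $P_4$ or $P_3+P_1$ contains $2P_2$, $P_2+2P_1$ or $4P_1$ as an induced subgraph, and hence $C_5$ is irrelevant for linear forests. So it suffices to show hardness for:
\begin{itemize}
\item graphs of girth larger than any fixed $g$, covering $H$ with a cycle, via bipartite graphs/subdivisions from Laskar--Lyle~\cite{LL09} and Lauri--Mitillos~\cite{LM20}, or directly for bipartite graphs for $H$ with odd cycle, plus line graphs for $H$ containing a claw;
\item $(C_5,2P_2,P_2+2P_1,4P_1)$-free graphs, a new reduction.
\end{itemize}

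To transfer {\sc Fall $k$-Colouring} hardness to {\sc Fall Chromatic/Achromatic Number}, I would use the ``simple trick'': a reduction where $\mathcal{F}(G)\subseteq\{k\}$ or $\delta(G)+1=k$, so deciding non-emptiness suffices. In $(2k-2)$-regular line graphs one can add a disjoint clique $K_k$ to intersect spectra. For girth I would use $3$-fall-colouring of bipartite graphs with $\delta=2$ forcing the spectrum to be within $\{2,3\}$; adding a disjoint $K_3$, or a $C_3$ where allowed, isolates $3$.

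The new $(C_5,2P_2,P_2+2P_1,4P_1)$-free hardness is the main obstacle. Such graphs are complements of $(C_5,C_4,\text{paw-like},K_4)$-free graphs, and colour classes have size at most $3$. I would reduce from partition into triangles of a $K_4$-free graph: complement it, so that a fall colouring with classes of size $3$ corresponds to a perfect triangle packing whose triangles are dominating. I would pad with gadgets to enforce domination and to forbid the induced subgraphs.
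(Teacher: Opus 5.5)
Your case reductions are sound: intersection of spectra under disjoint union, sumset under join, the fact that every linear forest outside $P_4$ and $P_3+P_1$ contains $2P_2$, $P_2+2P_1$ or $4P_1$, and line graphs plus a disjoint $K_3$ for the claw. However, the two places where the paper does new work are left as intentions, and one of your tools fails.

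\textbf{Missing hardness for $(C_5,2P_2,P_2+2P_1,4P_1)$-free graphs.} This one result covers all remaining linear forests, and ``pad with gadgets'' is not a proof. Note that the complement of $P_2+2P_1$ is the diamond, not a paw-like graph. If $G$ is $K_4$-free, a fall colouring of $\overline{G}$ is exactly a partition of $V(G)$ into triangles, edges lying in no triangle, and isolated vertices. So a triangle-partition reduction must also control classes of size $1$ and $2$. It would also need \NP-hardness on graphs that are simultaneously $C_4$-, $C_5$-, diamond- and $K_4$-free, which you neither cite nor construct.

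The paper designs no new gadget. It takes the $5n$-vertex graph $G$ of Paulusma, Picouleau and Ries (after Kr\'al' et al.) built from a $(3,3)$-monotone formula $\Phi$: clause paths $c(x)a_1^c c(y)a_2^c c(z)$ and variable triangles on the $c$-type vertices. This graph is already $(C_4,C_5,\overline{P_2+2P_1},K_4)$-free. The paper then shows that every fall colouring of $\overline G$ has exactly $\frac{7}{3}n$ classes:
\begin{itemize}
\item there are no singletons, since $G$ has no isolated vertices;
\item each $a$-type vertex is paired with one of its two clause neighbours;
\item a pair inside a variable triangle fails domination;
\item counting gives $2n+\frac{n}{3}$ classes.
\end{itemize}
It then uses $\sigma(G)\ge\frac{7}{3}n$, with equality iff $\Phi$ is $1$-satisfiable. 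It also uses that a clique cover of that size has cliques of exactly these shapes, so it is a fall colouring of $\overline{G}$. Together these give $\mathcal{F}(\overline G)=\{\frac{7}{3}n\}$ if $\Phi$ is $1$-satisfiable, and $\emptyset$ otherwise.

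\textbf{The $(P_3+P_1)$-free algorithm.} Your last branch (connected, co-connected, containing $3P_1$) has no algorithm. Choosing one vertex from each clique of $G-N[v]$ gives a product of clique sizes, and you give no reason why the candidates are polynomially many or why the spectrum is small. The missing idea is Olariu's theorem (Corollary~\ref{c-olariu}): each co-component is $3P_1$-free or a disjoint union of cliques, so that branch is in fact empty.

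Your $3P_1$-free branch also uses the wrong condition. Independent pairs dominate automatically there, whereas singleton classes must be dominating vertices. Deleting the set $S$ of dominating vertices of such a co-component $G_i$, the remaining classes are precisely the edges of a perfect matching of the complement. So $\mathcal{F}(G_i)$ is $\{|S|+\frac{1}{2}|V(G_i)\setminus S|\}$ or $\emptyset$.

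\textbf{Cycles.} Your isolating gadget $K_3$ is the same graph as $C_3$. It is therefore unavailable both for $C_3$-free graphs and for large-girth classes, and the subdivision step is asserted but not checked. This can be repaired:
\begin{itemize}
\item replacing every edge by a path with three internal vertices preserves fall $3$-colourability;
\item a disjoint odd cycle whose length is divisible by $3$ has spectrum $\{3\}$.
\end{itemize}
The paper takes a simpler route. For every $C_r$ with $r\ge 4$ it uses Silva's hardness for chordal graphs (Lemma~\ref{l-chordal}). For $C_3$ it adds a $10$-vertex $C_3$-free gadget with spectrum $\{3\}$ to Laskar and Lyle's bipartite instances (Lemma~\ref{l-fallbip}).
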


\medskip
\noindent
The two dichotomies in Theorem~\ref{t-fall} also coincide with the dichotomy for {\sc Chromatic Number} for $H$-free graphs~\cite{KKTW01}. However, {\sc Chromatic Number} is polynomial-time solvable for other graph classes, such as chordal graphs, for which {\sc Fall Chromatic Number} and {\sc Fall Achromatic Number} are \NP-complete~\cite{Si19}.

\medskip
\noindent
{\bf Methodology.}
The new polynomial-time case in Theorem~\ref{t-fall} follows by using a result of Olariu~\cite{Ol88}.
However, in order
to prove the new polynomial-time results in Theorem \ref{t-tight}, we could not make use of previous tools, such as
the concepts of \emph{b-closure} and \emph{partial b-closure} as defined and used by Havet, Sales and Sampaio~\cite{HSS12}
to obtain polynomial-time algorithms for block graphs, co-bipartite graphs and $P_4$-sparse graphs.
The b-closure and partial b-closure of a graph are obtained by adding edges within the set of dense vertices
and/or within the set of their neighbours, in order to reduce to polynomial-time cases of
{\sc Chromatic Number} or {\sc Precolouring Extension}.
However, adding new edges to the input graph~$G$ may destroy the $H$-free property of $G$.
Moreover, {\sc Chromatic Number} and {\sc Precolouring Extension} are \NP-hard if $H$ is an induced subgraph of neither $P_1+P_3$ nor $P_4$~\cite{KKTW01}.
To circumvent this issue, we still exploit the idea behind precolouring extension and introduce the new technique of {\it tight $b$-precolouring extension}.
Namely, we first construct a partial colouring~$c'$ of the input graph~$G$ that satisfies some additional properties.
Afterwards, we will then try to extend $c$ to a tight $b$-colouring of~$G$.

\section{Preliminaries}\label{s-pre}

Let $G=(V,E)$ be a graph. For a set $S\subseteq V$, 
we let $G[S]$ denote the subgraph of $G$ induced by $S$, and 
we write $G-S=G[V\setminus S]$.
The {\it neighbourhood} of a vertex $u\in V$ is the set $N(u)=\{v\; |\; uv\in E\}$.
The \emph{degree} of $u$, denoted by $d(u)$, is equal to $|N(u)|$, and $G$ is {\it $r$-regular} if every vertex of $G$ has degree~$r$; if $r=3$, then $G$ is said to be {\it cubic}. For a set $S\subseteq V$, the \emph{outer boundary of $S$}~\cite{Ben13} is defined as 
$\delta S=(\bigcup_{u \in S}N(u))\setminus S$.  Two disjoint vertex subsets $S$ and $T$ in $G$ are {\it complete} to each other if 
every vertex of $S$ is adjacent to every vertex of $T$.

An {\it independent set} in 
$G$ is a set $I$ of pairwise non-adjacent vertices, which is {\it maximal} if $I\cup \{v\}$ is not independent for any $v\in V\setminus I$.  A {\it clique} in $G$ is a set of pairwise adjacent vertices. The {\it clique number} $\omega(G)$ of $G$ is the size of a largest clique in $G$.
A set $D\subseteq V$ is {\it dominating} if every vertex in $V\setminus D$ has at least one neighbour in $D$. If $D=\{u\}$ for some $u\in V$, then $u$ is a {\it dominating} vertex of $G$.
A {\it matching} in $G$
is a set $M\subseteq E$ such that no two distinct edges in $M$ share a common end-vertex,
which is {\it maximal} if $M\cup \{e\}$ is not maximal for any $e\in E\setminus M$.
A matching $M$ is {\it perfect} if every vertex of $G$ is incident to an edge of $M$.

Recall that a colouring of $G$ is a mapping $c:V\to \mathbb{Z}^+$ such that $c(u)\neq c(v)$ for every edge $uv\in E$. We say that $c(u)$ is the {\it colour} of $u$ and that all vertices mapped by $c$ to some specific colour~$i$ form a \emph{colour class}. 
Note that the colour classes of $c$ are independent sets that partition $V$. 

We recall that a graph $G$ is $H$-free for some graph~$H$ if $G$ does not contain $H$ as an induced subgraph. A graph $G$ is {\it $(H_1,\ldots,H_r)$-free} for some set of graphs $\{H_1,\ldots,H_r\}$ if $G$ is $H_i$-free for $i\in \{1,\ldots,r\}$. 

A {\it forest} is a graph with no cycles. A forest is {\it linear} if all its connected components are paths.
We let $C_r$ and $K_r$ denote the cycle and complete graph, respectively, on $r$ vertices. We also let $K_{1,r}$ denote the star on $r+1$ vertices; the graph $K_{1,3}$ is also known as the {\it claw}. A {\it split graph} is a graph whose vertex set can be partitioned into a clique $K$ and independent set $I$ (where we allow $K$ or $I$ to be empty). 
It is well known and not difficult to see that a graph is split if and only if it is $(C_4,C_5,2P_2)$-free.
A graph is {\it chordal} if it has no induced cycles of length greater than~$3$, that is, if it is $(C_4,C_5,\ldots)$-free.

A graph $G$ is {\it $P_4$-sparse} if every set of five vertices in $G$ induces at most one $P_4$. Note that $P_4$-free graphs are $P_4$-sparse.
The {\it line graph} $L(G)$ of a graph $G=(V,E)$ has as vertices the edges from $E$, and there exists an edge between two distinct vertices $e$ and $f$ of $L(G)$ if and only if $e$ and $f$ share a common end-vertex in $G$. It is well known and readily seen that every line graph is claw-free.

A graph $G=(V,E)$ is {\it complete multi-partite} if $V$ can be partitioned into independent sets $V_1,\ldots,V_r$ for some $r\geq 1$ such that for every $i,j$ with $i\neq j$, there exists an edge between every vertex of
$V_i$ and every vertex of $V_j$.
Recall that the paw is the graph on vertices $u_1,u_2,u_3,v_1$ and edges $u_1u_2$, $u_2u_3$, $u_3u_1$, $u_1v_1$. We will need the characterization of paw-free graphs due to Olariu~\cite{Ol88}.

\begin{lemma}[\cite{Ol88}]\label{l-olariu}
A graph $G$ is paw-free if and only if each connected component of $G$ is $C_3$-free or complete multi-partite. 
\end{lemma}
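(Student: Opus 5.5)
The easy direction can be checked directly. For the harder direction, the plan is to take a maximal induced complete multi-partite subgraph containing a triangle and show that it absorbs every remaining vertex.

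For the backward direction, note that the paw contains a triangle, so a $C_3$-free component contains no induced paw. Next, a graph is complete multi-partite exactly when non-adjacency is an equivalence relation, i.e.\ when it is $(P_2+P_1)$-free. The paw contains an induced $P_2+P_1$, namely $\{u_2,u_3,v_1\}$. Hence a complete multi-partite component is paw-free as well. Since the paw is connected, any induced paw lies inside a single component, so $G$ is paw-free.

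For the forward direction, let $C$ be a connected component of a paw-free graph $G$ that contains a triangle; we show that $C$ is complete multi-partite. Choose a set $K\subseteq V(C)$ such that $G[K]$ is complete multi-partite with at least three (nonempty) parts and $K$ is inclusion-maximal with this property. Such a $K$ exists because the triangle itself is complete multi-partite with three parts. Suppose $K\neq V(C)$. By connectivity there is a vertex $v\in V(C)\setminus K$ with a neighbour $a\in K$, say in part $A$. We will show that $v$ can be added to $K$, contradicting maximality. We repeatedly use that a vertex adjacent to exactly one vertex of a triangle creates a paw.

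The key step is to show that $v$ is complete to $K$ except possibly for exactly one entire part, which then gives the contradiction. It has three parts.
\begin{enumerate}
\item Pick $b,c$ in two other parts. Then $abc$ is a triangle, so $v$ has at least two neighbours among $a,b,c$.
\item The non-neighbours of $v$ in $K$ lie in a single part. Suppose $v$ misses $x\in X$ and $y\in Y$ with $X\neq Y$. By the first sub-step, $v$ has a neighbour $z$ in some third part $Z$. Then $xyz$ is a triangle in which $v$ is adjacent only to $z$, giving a paw.
\item If $v$ misses some $x\in P$, then $v$ misses all of $P$. Suppose instead that $v$ is adjacent to some $x'\in P$. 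Take $y$ in another part with $vy\in E$; this exists by the previous sub-step. Then $vx'y$ is a triangle, and $x$ is adjacent to $y$ but not to $x'$ (same part) and not to $v$. This gives a paw.
\end{enumerate}

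Consequently, either $v$ is complete to $K$, in which case $K\cup\{v\}$ is complete multi-partite with $\{v\}$ as a new part, or $v$ is anticomplete to exactly one part $P$ and complete to the rest, in which case $v$ joins $P$. Either way we contradict maximality, so $K=V(C)$.

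The main obstacle is sub-step (3). The natural attempt, using $x$ together with neighbours $y,z$ of $v$ in two other parts, produces only a diamond rather than a paw. The fix is to build the triangle through $v$ and $x'$ and hang $x$ off $y$.
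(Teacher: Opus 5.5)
Your proof is correct. There is no in-paper argument to compare it with: the paper does not prove this lemma but imports it from Olariu~\cite{Ol88}, and then uses only its complemented form, Corollary~\ref{c-olariu}.

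Your backward direction is fine: the paw is connected, contains a triangle, and contains the induced $P_2+P_1$ on $\{u_2,u_3,v_1\}$. The forward direction, via an inclusion-maximal complete multi-partite $K$ with at least three parts, also works. Sub-steps (1)--(3) force $v$ to be either complete to $K$, or anticomplete to exactly one part and complete to all others. In both cases $K\cup\{v\}$ is again complete multi-partite with at least three parts, so maximality is contradicted.

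The one place where your write-up is too terse is the first sentence of sub-step (2). Sub-step (1), applied to the particular $b,c$ you picked, only yields neighbours of $v$ in two distinct parts, and those two parts could be exactly $X$ and $Y$. What you actually need is that (1) holds for \emph{every} choice of $b,c$ in two distinct parts other than $A$; this is true because your choice of $b,c$ was arbitrary. With that:
\begin{itemize}
\item if $A\notin\{X,Y\}$, take $z=a$;
\item if, say, $A=X$, pick any $w$ in a part $W\notin\{X,Y\}$, which exists since $K$ has at least three parts. Apply (1) to the triangle $ayw$: since $vy\notin E$, you get $vw\in E$, so take $z=w$.
\end{itemize}

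With this sentence added, your proof is complete and self-contained. That is a modest gain over the bare citation, since the later uses of Corollary~\ref{c-olariu} in Lemmas~\ref{l-tightp13} and~\ref{l-p13} then no longer depend on~\cite{Ol88}.
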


\noindent
We let $\overline{G}$ denote the {\it complement} of a graph $G$, which is obtained from $G$ by replacing each edge in $G$ with a non-edge, and vice versa. 
A {\it co-component} of $G$ is a (connected) component of $\overline{G}$, and a {\it co-bipartite} graph is the complement of a bipartite graph.
Note that $\overline{C_3}=3P_1$. Hence, as bipartite graphs are $C_3$-free, co-bipartite graphs are $3P_1$-free. Note also that
$\overline{\mbox{paw}}=P_3+P_1$. In fact, we will always use the ``complement'' of Lemma~\ref{l-olariu}:

\begin{corollary}[\cite{Ol88}]\label{c-olariu}
A graph $G$ is $(P_3+P_1)$-free if and only if each co-component of $G$ is $3P_1$-free or a disjoint union of complete graphs.
\end{corollary}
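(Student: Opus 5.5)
The plan is to derive the corollary from Lemma~\ref{l-olariu} by taking complements. The key dictionary is: $G$ is $H$-free if and only if $\overline{G}$ is $\overline{H}$-free, since $S$ induces $H$ in $G$ exactly when $S$ induces $\overline{H}$ in $\overline{G}$. As noted above, $\overline{\mbox{paw}}=P_3+P_1$. Hence $G$ is $(P_3+P_1)$-free if and only if $\overline{G}$ is paw-free.

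Next, apply Lemma~\ref{l-olariu} to $\overline{G}$. It says that $\overline{G}$ is paw-free if and only if each connected component $D$ of $\overline{G}$ is $C_3$-free or complete multi-partite. By definition, the components of $\overline{G}$ are precisely the subgraphs $\overline{G[X]}$, where $X$ ranges over the vertex sets of the co-components of $G$. So the condition on $D=\overline{G[X]}$ has to be translated into a condition on $G[X]$.

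This translation is done separately for the two cases.
\begin{itemize}
\item If $D$ is $C_3$-free, then $G[X]=\overline{D}$ is $\overline{C_3}$-free, that is, $3P_1$-free.
\item If $D$ is complete multi-partite with parts $V_1,\ldots,V_r$, then in $\overline{D}$ each $V_i$ becomes a clique. Every pair of vertices in different parts is adjacent in $D$, so no edge of $\overline{D}$ joins different parts. Thus $\overline{D}$ is the disjoint union of the complete graphs on $V_1,\ldots,V_r$.
\end{itemize}

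Each of these steps is reversible. A disjoint union of complete graphs complements to a complete multi-partite graph, taking the cliques as the parts. A $3P_1$-free graph complements to a $C_3$-free graph. Chaining the equivalences gives the statement in both directions.

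The only subtle point is to check that the component structure is respected. The co-components of $G$ are by definition the components of $\overline{G}$, and complementation within a vertex set $X$ commutes with taking induced subgraphs: $\overline{G}[X]=\overline{G[X]}$. Once this is noted, there is no real obstacle; the argument is bookkeeping.
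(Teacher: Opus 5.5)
Your proposal is correct and follows the paper's route. The paper also obtains this corollary as the complement of Olariu's paw-free characterization (Lemma~\ref{l-olariu}), using $\overline{\mbox{paw}}=P_3+P_1$ and $\overline{C_3}=3P_1$; your argument spells out the bookkeeping the paper leaves implicit, namely that components of $\overline{G}$ correspond to co-components of $G$ and that complete multi-partite graphs complement to disjoint unions of cliques.
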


\noindent
A {\it clique cover} of a graph $G$ is a set of cliques ${\cal K}$ in~$G$ with the property that each vertex of~$G$ belongs to exactly one clique of ${\cal K}$. 
The {\it clique covering number}~$\sigma(G)$ is the size of a smallest clique cover of $G$. We observe that $\chi(G)=\sigma(\overline{G})$.

\section{The Proof of Theorem~\ref{t-b}}\label{s-b}

In this section we show Theorem~\ref{t-b} by a straightforward combination of the following known results, which we will use as lemmas. For the first one, we must however make the additional observation that the construction of Bonomo et al.~\cite{BSSV15}, used to show \NP-hardness of {\sc b-Chromatic Number} in co-bipartite graphs, is also $2P_2$-free.

\begin{figure}[b]
    \centering
    \includegraphics[width=0.3\linewidth]{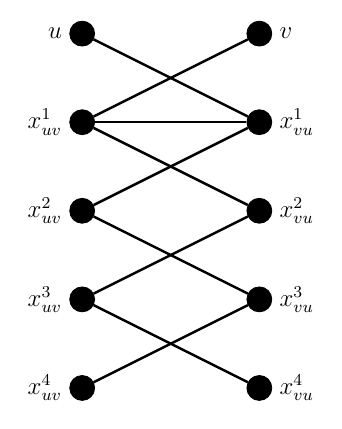}
    \caption{The gadget $H_{uv}$ used in the proof of Lemma~\ref{l-3p1}.}
    \label{fig:gadget-h-npc-2P2-free}
\end{figure}

\begin{lemma}[\cite{BSSV15}]\label{l-3p1}
{\sc b-Chromatic Number} is \NP-hard for $2P_2$-free co-bipartite graphs, and thus for $(3P_1,2P_2)$-free graphs.
\end{lemma}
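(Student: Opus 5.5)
The plan is to reuse the reduction of Bonomo et al.~\cite{BSSV15} unchanged and to check one extra structural property of the graphs it produces. Their construction starts from a graph $G$ on which an \NP-hard problem is posed. I expect this to be a matching-type or edge-colouring-type problem on the complement, since b-colourings of co-bipartite graphs correspond to structures in the bipartite complement. For each edge $uv$ of $G$ the reduction attaches a gadget $H_{uv}$ (Figure~\ref{fig:gadget-h-npc-2P2-free}), and it outputs a co-bipartite graph $G'$ together with a threshold $k$. Correctness of the reduction is already established in~\cite{BSSV15}, so I take it as given. The only new claim is that $G'$ is $2P_2$-free. The consequence for $(3P_1,2P_2)$-free graphs is then immediate, because co-bipartite graphs are $3P_1$-free, as noted after Corollary~\ref{c-olariu}.

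To prove $2P_2$-freeness I will work in the complement. We have $\overline{2P_2}=C_4$, so $G'$ is $2P_2$-free if and only if the bipartite graph $\overline{G'}$ is $C_4$-free. Let $\overline{G'}$ have bipartition $(A,B)$, where $A$ and $B$ are the two cliques of $G'$. An induced $C_4$ in a bipartite graph uses two vertices $a,a'\in A$ and two vertices $b,b'\in B$ that are all mutually adjacent across the bipartition. So it suffices to show that no two vertices of $A$ have two common neighbours in $\overline{G'}$. Equivalently, in $G'$ no two vertices of $A$ are both non-adjacent to the same two vertices of $B$.

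The check proceeds by vertex types. I will list the vertices of $A$ and $B$ by type: vertices originating from $V(G)$, vertices internal to each gadget $H_{uv}$, and any global padding or dense vertices added to force the value of $m(G')$. For each type I will record its non-neighbourhood across the bipartition. The key points to verify are as follows.
\begin{itemize}
\item Gadget-internal non-edges are local to one gadget and form a matching-like or star-like pattern, so two vertices inside one gadget never share two cross non-neighbours.
\item Vertices from different gadgets $H_{uv}$ and $H_{u'v'}$ can share at most one cross non-neighbour, namely one coming from a common endpoint of $uv$ and $u'v'$.
\item Padding vertices are complete to the opposite side in $G'$, so they have no cross non-neighbours at all.
\end{itemize}

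The main obstacle is the second point. Two gadgets attached to edges sharing an endpoint, or attached to parallel structure, could a priori create a $C_4$ in the complement. If the original construction does create such a $C_4$, I would first check whether it involves a vertex whose non-adjacencies are irrelevant to the b-chromatic argument, and if so add the corresponding edge. Such an edge would keep the graph co-bipartite, preserve the degrees that determine the dense vertices, and leave the correctness argument of~\cite{BSSV15} intact.
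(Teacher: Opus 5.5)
Your reformulation matches the paper's: the instance is the complement of a bipartite graph $H$, and since $\overline{2P_2}=C_4$ it suffices to show that $H$ is $C_4$-free. The gap is that you never carry out this check, which is the only new content of the lemma. Your three bullet points are guesses about a construction you do not specify; there are no ``padding vertices'', and the gadgets are not ``matching-like or star-like''. You flag the second point as the main obstacle and leave it open. Your fallback of adding edges to $G'$ to kill a possible $C_4$ is also not sound. An added edge between the two cliques of $G'$ deletes an edge of $H$, which changes the available matchings and hence the b-colourings, so the correctness proof of~\cite{BSSV15} no longer applies as cited. It also changes degrees, contrary to what you say.

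The missing ingredient is the concrete structure of the reduction. In~\cite{BSSV15} the source instance is a bipartite graph $G$ (for Minimum Maximal Matching), and the output is exactly $\overline{H}$ with $H=\bigcup_{uv\in E(G)}H_{uv}$, with no other vertices. Each gadget $H_{uv}$ has $10$ vertices and $9$ edges and is connected, so it is a tree. Concretely, it is the path $x_{vu}^4x_{uv}^3x_{vu}^2x_{uv}^1x_{vu}^1x_{uv}^2x_{vu}^3x_{uv}^4$ with a pendant $v$ at $x_{uv}^1$ and a pendant $u$ at $x_{vu}^1$. Distinct gadgets share only original vertices, and each original vertex has degree $1$ in every gadget containing it. Hence a $C_4$ of $H$ with no original vertex lies inside one gadget, which is impossible. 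A $C_4$ through an original vertex $u$ would have $x_{vu}^1$ and $x_{v'u}^1$ as its two neighbours of $u$, for some $v\neq v'$. But these two vertices have neighbourhoods $\{u,x_{uv}^1,x_{uv}^2\}$ and $\{u,x_{uv'}^1,x_{uv'}^2\}$, which meet only in $u$, so no fourth vertex closes the cycle. Thus $H$ is bipartite and $C_4$-free with the reduction left unchanged. This is the fact the paper uses, and its one-line ``thus $H$ is $C_4$-free'' implicitly relies on exactly this degree-one gluing.
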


\begin{proof}
The reduction given by Bonomo et al.~\cite{BSSV15} is based on replacing every edge $uv$ of a given bipartite graph $G$ by a gadget $H_{uv}$ with vertex set $\{u,v,x_{uv}^1, x_{uv}^2,x_{uv}^3, x_{uv}^4, x_{vu}^1,x_{vu}^2,x_{vu}^3,x_{vu}^4\}$ and
edge set $\{ux_{vu}^1,$ $vx_{uv}^1,$ $x_{uv}^1x_{vu}^1,$ $x_{uv}^1x_{vu}^2,$ $x_{vu}^1x_{uv}^2,$ $x_{uv}^2x_{vu}^3,$ $x_{vu}^2x_{uv}^3,$ $x_{uv}^3x_{vu}^4,$ $x_{vu}^3x_{uv}^4\}$; 
see also Figure \ref{fig:gadget-h-npc-2P2-free}. It may be verified that $H_{uv}$ is bipartite and $C_4$-free. Thus, $H=\bigcup_{uv\in E(G)}H_{uv}$ is bipartite and $C_4$-free.  Bonomo et al.'s reduction starts from {\sc Minimum Maximal Matching}~\cite{yannakakis1980edge} and involves showing a relationship between maximal matchings in $G$ and b-colourings in $\overline{H}$; hence $\overline{H}$ is the constructed instance of {\sc b-Chromatic Number}.  As $H$ is bipartite and $C_4$-free, it follows that  $\overline{H}$ is $(3P_1, 2P_2)$-free.  
\end{proof}

\noindent
The following two hardness results for {\sc Tight b-Chromatic Number} carry over to {\sc b-Chromatic Number}, as the former is a special case of the latter.
\begin{lemma}[\cite{KTV02,Ma98}]\label{l-bip}
{\sc Tight b-Chromatic Number} is \NP-complete for bipartite graphs, and thus for $C_3$-free graphs.
\end{lemma}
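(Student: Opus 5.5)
This statement is taken from the literature, so my plan is to recall the reduction of Kratochv\'il, Tuza and Voigt~\cite{KTV02} (building on Manlove~\cite{Ma98}) and check that it produces tight bipartite instances. The second part is immediate, since every bipartite graph is $C_3$-free. Membership in \NP{} is clear: a certificate is a colouring with $m(G)$ colours, and we can check in polynomial time that it is proper and that every colour class contains a b-chromatic vertex.

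For hardness I would reduce from a known \NP-complete problem, and I would follow~\cite{KTV02}, which encodes a {\sc 3-Sat}-type problem. The construction chooses a set of $k$ dense vertices $v_1,\ldots,v_k$, each of degree exactly $k-1$, placed so that the graph is bipartite. All other vertices have degree strictly smaller than $k-1$, so $m(G)=k$ and $G$ is tight. In any b-colouring with $k$ colours, every b-chromatic vertex must have degree at least $k-1$. Since there are exactly $k$ dense vertices, they must be exactly the $k$ b-chromatic vertices, and they must receive distinct colours. Each $v_i$ must then see all $k-1$ other colours among its $k-1$ neighbours, so its neighbourhood is coloured bijectively with the colours other than $c(v_i)$. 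These rigid constraints are what force the colouring to encode a truth assignment. Clause structure is realised through neighbourhoods shared between dense vertices; bipartiteness is kept by putting all dense vertices on one side and the shared neighbours on the other.

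Steps, in order:
\begin{enumerate}
\item Fix the source problem and build the gadgets.
\item Verify the degree counts, so that exactly $k$ vertices have degree $k-1$ and all others have degree less than $k-1$, which gives tightness.
\item Verify bipartiteness.
\item Prove both directions of the equivalence: a satisfying assignment yields a tight b-colouring, and the forced structure of a tight b-colouring yields a satisfying assignment.
\end{enumerate}

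I expect the main obstacle to be step~2, tightness. The original reductions were stated for {\sc b-Chromatic Number}, so I would have to check, or enforce by padding, that no auxiliary vertex reaches degree $k-1$. One way to enforce this is to attach pendant paths or leaves only to the dense vertices to raise their degrees. Such attachments must not create new high-degree vertices and must preserve bipartiteness; pendant leaves added to a dense vertex are the natural choice for both. If the cited construction already satisfies tightness, as Havet, Sales and Sampaio~\cite{HSS12} note for these references, the lemma follows directly.
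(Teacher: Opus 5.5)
Your proposal agrees with the paper, which gives no argument of its own for this lemma: it simply invokes the bipartite reductions of~\cite{KTV02,Ma98}, whose instances are tight, as recorded by Havet, Sales and Sampaio~\cite{HSS12}, and the $C_3$-free case is immediate because bipartite graphs are triangle-free. Your padding fallback would not work, though: attaching pendant leaves or paths only to the intended dense vertices pushes their degree above $m(G)-1$ without creating any further vertices of that degree, so $m(G)$ does not grow and the padded graph is never tight; this repair is unnecessary anyway, because the cited instances are already tight.
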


\begin{lemma}[\cite{HSS12}]\label{l-split}
{\sc Tight b-Chromatic Number} is \NP-complete for disjoint unions of a connected split graph and three stars, and thus for $(C_4,C_5,P_5)$-free graphs.
\end{lemma}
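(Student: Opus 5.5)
The statement is Lemma~\ref{l-split}. The \NP-completeness of {\sc Tight b-Chromatic Number} for disjoint unions of a connected split graph and three stars is taken directly from Havet, Sales and Sampaio~\cite{HSS12}. Membership in \NP{} is immediate, since a tight b-colouring can be verified in polynomial time. The only work is the ``and thus'' part: I will show that every disjoint union $G=S+K_{1,a}+K_{1,b}+K_{1,c}$, with $S$ a connected split graph, is $(C_4,C_5,P_5)$-free. Once this containment holds, the hard instances of~\cite{HSS12} all lie in the class of $(C_4,C_5,P_5)$-free graphs, and the hardness transfers verbatim.

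\emph{Step 1: $C_4$ and $C_5$.} The graphs $C_4$ and $C_5$ are connected. Hence any induced copy in $G$ lies entirely within one component. Stars are trees, so they contain no cycles at all. The split component $S$ is $(C_4,C_5,2P_2)$-free by the characterisation recalled in Section~\ref{s-pre}. So $G$ is $C_4$-free and $C_5$-free.

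\emph{Step 2: $P_5$.} The graph $P_5$ is connected, so again an induced copy would lie in a single component. A star contains no induced path on more than three vertices, since any path in $K_{1,r}$ has at most two edges. In $S$, the two end edges of an induced $P_5=v_1v_2v_3v_4v_5$, namely $v_1v_2$ and $v_4v_5$, form an induced $2P_2$. Indeed, none of $v_1v_4$, $v_1v_5$, $v_2v_4$, $v_2v_5$ is an edge of an induced path. This contradicts the $2P_2$-freeness of $S$, so $G$ is $P_5$-free.

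There is no real obstacle here. The only point that needs care is that all three forbidden graphs are connected, which is what lets the disjoint union with stars be handled component by component.
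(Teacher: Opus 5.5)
Your proposal is correct and matches how the paper treats this lemma: the \NP-completeness is cited from~\cite{HSS12}, and the ``and thus'' clause is the class containment, which the paper leaves implicit. You verify that containment in the natural way, component by component, using that split graphs are $(C_4,C_5,2P_2)$-free and that an induced $P_5$ contains an induced $2P_2$.
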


\noindent
We need one final known result.

\begin{lemma}[\cite{BDMMV09}]\label{l-p4}
{\sc b-Chromatic Number} is polynomial-time solvable for $P_4$-sparse graphs, and thus for $P_4$-free graphs.
\end{lemma}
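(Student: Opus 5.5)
The quickest route to this lemma is not to rebuild the algorithm of Bonomo et al.~\cite{BDMMV09}, but to reduce it to two results already recalled in Section~\ref{s-known}. First, $P_4$-sparse graphs form a subclass of $P_4$-tidy graphs, and $P_4$-tidy graphs have bounded clique-width~\cite{CMR00}. Second, Jaffke, Lima and Lokshtanov~\cite{JLL24} showed that {\sc b-Colouring} is polynomial-time solvable on every graph class of bounded clique-width. They do this by first computing a clique-width expression of bounded width, via an approximation algorithm, and then running a dynamic program over it.

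Given an $n$-vertex $P_4$-sparse graph $G$, I would solve {\sc b-Colouring} for each $k\in\{1,\ldots,m(G)\}$ and output the largest $k$ for which the answer is yes. By Observation~\ref{o-0}, $\varphi(G)\le m(G)\le n$. Moreover, every graph has a b-colouring (as noted in the introduction), so some $k$ in this range is accepted. The whole procedure therefore makes at most $n$ polynomial-time calls, which proves the first part of the lemma. The second part follows at once, because the paper notes in Section~\ref{s-pre} that every $P_4$-free graph is $P_4$-sparse.

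The only step that needs real care is the inclusion of $P_4$-sparse graphs in a class of bounded clique-width. If one prefers not to rely on~\cite{CMR00}, I would prove it directly from the structure theorem for $P_4$-sparse graphs. That theorem says every such graph with at least two vertices is a disjoint union, a join, or a spider with thin or thick legs whose head induces a $P_4$-sparse graph. I would then build a clique-width expression by induction with a constant number of labels. Disjoint unions and joins each cost one relabel-and-join step. A spider is assembled from an expression for its head together with labelled pendant clique and stable parts, which I expect needs at most four labels.

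A self-contained alternative, closer to~\cite{BDMMV09}, would be a recursion on this decomposition tree computing $\varphi$ directly. For that route, the hard part would be the join and spider cases, where one must track how b-chromatic vertices can be shared across the two sides.
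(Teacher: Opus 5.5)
Your proof is correct, but it takes a different route from the paper. The paper gives no argument of its own. It cites the dedicated algorithm of Bonomo et al.~\cite{BDMMV09}, which recurses over the decomposition of a $P_4$-sparse graph into disjoint unions, joins and spiders, and it gets the $P_4$-free case from the remark in Section~\ref{s-pre} that $P_4$-free graphs are $P_4$-sparse.

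You instead obtain the lemma as a corollary of a later meta-theorem: the result of Jaffke, Lima and Lokshtanov~\cite{JLL24} that {\sc b-Colouring} is polynomial on classes of bounded clique-width, combined with bounded clique-width of $P_4$-sparse graphs via~\cite{CMR00}. Scanning every $k\le m(G)$ rather than binary searching is the right choice, since for general graphs the values of $k$ admitting a b-colouring need not form an interval.

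Your fallback sketch of a $4$-expression is also sound. Build the head of a spider separately and relabel it to one label. Build the clique/stable part pair by pair with four labels, ending with one label for the clique and one for the stable set. Then take the disjoint union and join the head to the clique.

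Your route buys brevity and generality: it covers $P_4$-tidy graphs and every other class of bounded clique-width at once. It gives up efficiency and structural insight. The running time of~\cite{JLL24} has the form $n^{f(w)}$ with $f$ exponential in the clique-width $w$, so even for $w=4$ the degree of the polynomial is enormous. By contrast,~\cite{BDMMV09} yields an explicit low-degree algorithm with recursive formulas for $\varphi$.

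One correction to your closing remark. In the decomposition approach the join is the easy case. The two sides of a join of $G_1$ and $G_2$ must use disjoint colour sets, so its b-chromatic number is simply $\varphi(G_1)+\varphi(G_2)$. The delicate case is the disjoint union, where a colour class may meet both components and its b-chromatic vertex may lie in either one. This is why~\cite{BDMMV09} has to record, for each possible number of colours, how many colour classes can receive a b-chromatic vertex.
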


\noindent
We are now ready to show Theorem~\ref{t-b}, which we restate below.

\tb*

\begin{proof}
Let $H$ be a graph. 
Suppose $H$ has an induced $C_r$.
If $r=3$, then we use Lemma~\ref{l-bip}.
If $r\geq 4$, then we use Lemma~\ref{l-split}.
Now assume $H$ is $(C_3,C_4,\ldots)$-free, so $H$ is a~forest.

If $H$ has an induced $2P_2$ or induced $3P_1$, then we use Lemma~\ref{l-3p1}.
Now assume $H$ is a $(3P_1,2P_2)$-free forest, or in other words, a $2P_2$-free linear forest with at most two connected components. 
First suppose  $H$ has two connected components. As $H$ is a $2P_2$-free linear forest, we find that $H=P_r+P_1$ for some $r\geq 1$. As $H$ is $3P_1$-free, we have that $r\leq 2$. Hence, $H=P_2+P_1$ or $H=2P_1$, so $H\ssi P_4$ and we can apply Lemma~\ref{l-p4}. 
Now suppose that $H$ is connected. Hence, as $H$ is a $2P_2$-free linear forest, $H=P_r$ for some $r\leq 4$, and we use Lemma~\ref{l-p4} again.
\end{proof}

\section{The Proof of Theorem~\ref{t-tight}}\label{s-tight}

In this section, we prove Theorem~\ref{t-tight}.
We will repeatedly make use of the following proposition, which follows from the definitions of the concepts involved. 

\begin{proposition}\label{p-dense}
Let~$G$ be a tight graph, and let~$T$ be the set of dense vertices of~$G$.
    In every tight b-colouring~$c$ of~$G$ (if one exists), the following holds:\\[-10pt]
    \begin{itemize}
    \item [(i)] the set $T$ is exactly the set of b-chromatic vertices under~$c$; and
    \item [(ii)] each colour class of $c$ has exactly one vertex of $T$, and this vertex has a unique neighbour in each of the other colour classes of $c$.
    \item [(iii)] every vertex of $T_1=\{v\in T : v\mbox{ is a dominating vertex of }G[T]\}$ belongs to a colour class of $c$ that does not contain any vertex of $\delta T$.
    \end{itemize}
\end{proposition}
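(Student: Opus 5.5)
Write $m=m(G)$. Since $G$ is tight, $|T|=m$ and every vertex of $T$ has degree exactly $m-1$; every vertex outside $T$ has degree less than $m-1$. A tight b-colouring $c$ uses exactly $m$ colours, and a b-chromatic vertex must see the other $m-1$ colours. The whole proof rests on counting degrees against these numbers.

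For (i), take a b-chromatic vertex $v$. It has at least $m-1$ neighbours, so $d(v)\ge m-1$ and $v$ is dense, giving $v\in T$. For the converse, each of the $m$ colour classes contains a b-chromatic vertex. These vertices are distinct because they lie in different classes, so there are at least $m$ of them, all in $T$. Since $|T|=m$, the b-chromatic vertices are exactly the vertices of $T$, and each class contains exactly one of them. This gives (i) and the first half of (ii).

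For the second half of (ii), take $v\in T$. It has exactly $m-1$ neighbours and these must cover the $m-1$ colours other than $c(v)$. By pigeonhole, each of those colours appears on exactly one neighbour of $v$.

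Part (iii) is the only step needing a little care. Let $v\in T_1$ and suppose, for contradiction, that some $w\in\delta T$ has $c(w)=c(v)$. By (ii), every other vertex $u\in T\setminus\{v\}$ lies in a different colour class from $v$. Since $v$ dominates $G[T]$, $v$ is adjacent to all $m-1$ vertices of $T\setminus\{v\}$. These are already $m-1$ neighbours with pairwise distinct colours (by (ii)), so $N(v)=T\setminus\{v\}$. Now $w\in\delta T$ is adjacent to some $u\in T$. We have $u\neq v$, since otherwise $w\in N(v)=T\setminus\{v\}$, contradicting $w\notin T$. So $u$ has two neighbours of colour $c(v)$, namely $v$ and $w$, with $w\ne v$. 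This contradicts the uniqueness in (ii). Hence the colour class of $v$ contains no vertex of $\delta T$.
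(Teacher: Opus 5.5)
Your proof is correct and follows the paper's argument essentially step for step. For (i) and (ii) you use the same degree count against $|T|=m$; for (iii) you take a neighbour $u\in T\setminus\{v\}$ of $w$ that would see colour $c(v)$ twice. The only difference is that you get $u\neq v$ via $N(v)=T\setminus\{v\}$, whereas the paper just notes that $v$ and $w$ share a colour and hence are non-adjacent, which is a slightly shorter route to the same point.
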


\begin{proof}
We write $m=m(G)$. Let $T=\{u_1,\ldots,u_m\}$.
    Assume that~$G$ has a tight b-colouring~$c$.
    Let $V_1,\ldots, V_m$ be the colour classes of $c$.
    
We first show (i).
    Since~$c$ is tight, that is, uses~$m$ colours, every b-chromatic vertex has degree at least $m-1$, so is contained in~$T$.
    As~$c$ is a b-colouring, $c$ has at least~$m$ b-chromatic vertices.
    As~$G$ is tight, we have $|T|=m$.
    Therefore, $T$ is precisely the set of b-chromatic vertices.

 We now show (ii). Since $m=|T|$, each $V_i$ ($1\leq i\leq m$) has exactly one vertex from~$T$. Hence, we may assume without loss of generality that $T\cap V_i=\{u_i\}$ for each $i$ ($1\leq i\leq m)$.
   As every vertex $u_i$  ($1\leq i\leq m$) has degree $m-1$ and is b-chromatic, every  $u_i$ ($1\leq i\leq m$) has a unique neighbour in $V_j$ for each $j$ ($1\leq j\leq m$, $j\neq i$).

Finally, we show (iii). For a contradiction, assume that $u\in T_1$ and $s\in \delta T$ belong to the same colour class of $c$. This implies in particular that $u$ and $s$ are not adjacent. As $s\in \delta T$, there exists a vertex $u'\in T\setminus \{u\}$ that is adjacent to $s$. As $u$ is a dominating vertex of $T$, we also find that $u'$ is adjacent to $u$. Hence, $u'\in T$ has two neighbours in the same colour class, which contradicts (ii).
\end{proof}

\noindent
We now introduce two of our new key concepts, which are related to each other.

\begin{definition}
\label{d-precoldefs}
Let $G$ be a tight graph with a set $T$ of dense vertices. 
For a subset $S'\subseteq \delta T$, an
\emph{$S'$-partial b-colouring} of $G$ is a colouring~$c'$ of $G[S'\cup T]$ if:
\begin{enumerate}
\item $c'(u)\neq c'(u')$ for every two distinct vertices $u,u'\in T$, and
\item for every $u\in T$ with $c'(u)=c'(s)$ for some $s\in S'$, it holds that every vertex of $T\setminus \{u\}$ has exactly one neighbour with colour $c'(u)$.
\end{enumerate}
\end{definition}

\begin{definition}\label{d-extension}
Let $G$ be a tight graph with a set $T$ of dense vertices.
A b-colouring $c$ of $G$ is a \emph{b-precolouring extension} of an $S'$-partial b-colouring $c'$ in $G$ if:
\begin{enumerate}
\item $c(v)=c'(v)$ for every $v\in S'\cup T$ (that is, $c$ is an extension of $c'$), and
\item every colour class of $c$ with at least two vertices of $\delta T$ contains no vertex of $\delta T\setminus S'$.
\end{enumerate}
\end{definition}

\noindent
An $S'$-partial b-colouring $c'$ and a b-precolouring extension of $c'$ are illustrated for a tight graph in Figure \ref{fig:partial-precol}.
\begin{figure}
    \centering
    \includegraphics[width=0.6\linewidth]{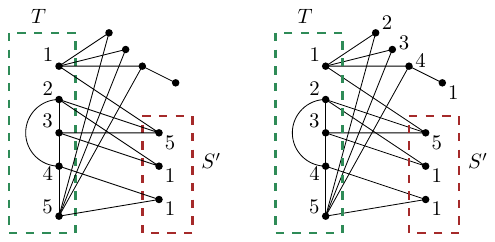}
    \caption{On the left: a tight graph $G$ with a set $T$ of dense vertices, where $G$ is annotated with an $S'$-partial b-colouring $c'$ (as per Definition~\ref{d-precoldefs}) using colours $\{1,\ldots,5\}$. On the right: a b-precolouring extension of $c'$ (as per Definition \ref{d-extension}).}
    \label{fig:partial-precol}
\end{figure}
The following observation is a straightforward corollary of Definition~\ref{d-precoldefs}.
\begin{observation}
\label{o-precolext}
Let $G$ be a tight graph.  If $c$ is a b-precolouring extension of an S-partial b-colouring $c'$ in $G$, then $c$ is a tight b-colouring of $G$.
\end{observation}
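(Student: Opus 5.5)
The observation follows by unpacking the definitions, so the plan is simply to check the two requirements of a tight b-colouring. Write $m=m(G)$ and let $T$ be the set of dense vertices. We must show that $c$ uses exactly $m$ colours and that every colour class contains a b-chromatic vertex. By Definition~\ref{d-extension}, $c$ is a b-colouring of $G$, and every b-colouring has a b-chromatic vertex in each colour class. So the real content is the count of colours. Two points need care: the number of colours is exactly $m$, and no colour class lacks a vertex of $T$.

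\textbf{Step 1: at least $m$ colours.} By condition~1 of Definition~\ref{d-precoldefs}, the vertices of $T$ receive pairwise distinct colours under $c'$. Since $c$ extends $c'$ on $S'\cup T$, the same holds under $c$. As $G$ is tight, $|T|=m$, so $c$ uses at least $m$ colours.

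\textbf{Step 2: at most $m$ colours.} Since $c$ is a b-colouring, each colour class has a b-chromatic vertex. A b-chromatic vertex in a $k$-colouring has degree at least $k-1$. By Observation~\ref{o-0}, $\varphi(G)\le m$, so the number $k$ of colours of the b-colouring $c$ is at most $m$. Combined with Step 1, $c$ uses exactly $m$ colours.

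\textbf{Step 3: conclusion.} Since $c$ is a b-colouring with exactly $m(G)$ colours, it is a tight b-colouring by definition. No step is a genuine obstacle. The only point to get right is the upper bound in Step 2, and it follows directly from $\varphi(G)\le m(G)$ rather than from any property of $S'$. Conditions~2 of both definitions are not needed here; they only ensure that such extensions exist and can be found algorithmically later.
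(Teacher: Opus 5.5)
Your proof is correct and is essentially the paper's argument: there are at least $m(G)$ colours because the $m(G)$ vertices of $T$ receive pairwise distinct colours, and at most $m(G)$ because $c$ is a b-colouring. The only difference is that you state explicitly the appeal to $\varphi(G)\le m(G)$ from Observation~\ref{o-0}, which the paper leaves implicit; your remark that no colour class lacks a vertex of $T$ is not required by the definition of a tight b-colouring, though it follows from the count by pigeonhole.
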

\begin{proof}
Recall that $|T|=m(G)$. Moreover, by definition, $c'(u)\neq c'(u')$ for every two distinct vertices $u,u'\in T$. Hence, $c'$ uses at least $m(G)$ colours.  Since $c$ is an extension of $c'$, this means that $c$ will use at least $m(G)$ colours.  Since $c$ is a b-colouring of $G$, we conclude that $c$ uses exactly $m(G)$ colours, so $c$ is tight.
\end{proof}

\noindent
In our next theorem we give a polynomial-time algorithm to determine whether an $S'$-partial b-colouring of a tight graph has a b-precolouring extension.

\begin{theorem}\label{t-semistrong}
Let $T$ be the set of dense vertices of a tight graph $G$, and let $S'\subseteq \delta T$, such that $G$ has an $S'$-partial b-colouring~$c'$.
It is possible to determine in polynomial time if $c'$ has a b-precolouring extension in $G$. \end{theorem}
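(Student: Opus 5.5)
The plan is to reduce the question to a bipartite matching problem. Write $m=m(G)$, $T=\{t_1,\ldots,t_m\}$ with $c'(t_j)=j$, and $R=\delta T\setminus S'$. Let $F$ be the set of colours in $\{1,\ldots,m\}$ that $c'$ does not use on $S'$. We may assume $c'$ uses only colours $1,\ldots,m$, since by condition 1 of Definition~\ref{d-precoldefs} the vertices of $T$ already use $m$ colours, and any extension must be a tight b-colouring by Observation~\ref{o-precolext}.

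First I analyse what an extension $c$ must look like.

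\emph{Colours on $R$.} By condition 2 of Definition~\ref{d-extension}, a colour class containing a vertex $r\in R$ contains no other vertex of $\delta T$. Hence:
\begin{itemize}
\item the vertices of $R$ receive pairwise distinct colours;
\item these colours lie in $F$;
\item if $r$ receives colour $j$, then $r$ is not adjacent to $t_j$, since the colouring is proper.
\end{itemize}
So $c|_R$ is a matching saturating $R$ in the bipartite graph $B$ with parts $R$ and $F$, where $r$ is joined to $j$ exactly when $rt_j\notin E(G)$. Edges inside $R$ cause no problem, because the colours on $R$ are distinct, and edges between $R$ and $S'$ cause none, because $F$ avoids the colours on $S'$.

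\emph{The b-chromatic condition on $T$.} By Proposition~\ref{p-dense}, each $t_i$ has degree exactly $m-1$ and must see every colour other than $i$. This holds if and only if the neighbours of $t_i$ have pairwise distinct colours, all different from $i$. I check in polynomial time the part already fixed by $c'$: for every $t_i$, its neighbours in $T\cup S'$ must carry distinct colours. If this fails, I answer no. Its $R$-neighbours automatically get distinct colours from $F$, and these differ from the colours of its $S'$-neighbours. Since $S'$-neighbours and $T$-neighbours of $t_i$ also have distinct colours, a counting argument ($m-1$ neighbours, $m-1$ distinct colours, none equal to $i$) gives that $t_i$ is b-chromatic.

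\emph{The remaining vertices.} These are the vertices of $V\setminus(T\cup\delta T)$. As $G$ is tight, there are exactly $m$ vertices of degree at least $m-1$, namely those of $T$, so every other vertex has degree at most $m-2$. Colouring the remaining vertices greedily with colours from $\{1,\ldots,m\}$ therefore always succeeds. Since they are not in $\delta T$, they do not affect condition 2 of Definition~\ref{d-extension}, and since they are not adjacent to $T$, they do not affect the b-chromatic property of the vertices of $T$.

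The algorithm is then:
\begin{enumerate}
\item Verify the local distinctness condition on $T\cup S'$ described above.
\item Build $B$ and test, for example with Hopcroft--Karp, whether it has a matching saturating $R$.
\item If so, colour $R$ according to the matching and extend greedily to the remaining vertices.
\end{enumerate}
By the necessity argument, a no-answer at either test is correct. Sufficiency follows from the checks above together with the observation that each colour class gains at most one vertex of $R$, namely in a class with no $S'$-vertex and hence with no other $\delta T$ vertex.

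I expect the main obstacle to be the precise verification of condition 2 of Definition~\ref{d-extension}. In particular, I must confirm that $F$ correctly encodes "the colour class of $r$ contains no other vertex of $\delta T$", and that every possible extension indeed induces a saturating matching. The counting argument for the b-chromatic property should then be routine.
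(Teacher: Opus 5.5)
\textbf{There is a genuine gap.} Reducing to a bipartite matching is the right idea, but your graph $B$ is missing a constraint, so the counting step that is supposed to make each $t_i$ b-chromatic does not go through. You check three things:
\begin{itemize}
\item the colours on $t_i$'s $R$-neighbours are pairwise distinct;
\item they differ from the colours on its $S'$-neighbours;
\item the $T\cup S'$-neighbours of $t_i$ carry distinct colours.
\end{itemize}
You never rule out that an $R$-neighbour $r$ of $t_i$ receives the colour $j$ of a $T$-neighbour $t_j$ of $t_i$. Your edge condition $rt_j\notin E(G)$ does not exclude this. When it happens, $t_i$ has degree exactly $m-1$, sees $j$ twice, and therefore misses some colour. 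In particular, $B$ always offers $r$ the colour of any vertex dominating $G[T]$, because such a vertex has no neighbour outside $T$. Proposition~\ref{p-dense}~(iii) forbids exactly this.

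\textbf{Counterexample.}
\begin{itemize}
\item Let $T=\{t_1,t_2,t_3,t_4\}$, with $t_4$ adjacent to $t_1,t_2,t_3$.
\item Add two vertices $r,s$, each adjacent to exactly $t_1$ and $t_3$.
\item Add two pendant vertices $p,q$ attached to $t_2$.
\end{itemize}
This graph is tight with $m=4$ and dense set $T$. Set $S'=\{s\}$, $c'(t_i)=i$ and $c'(s)=2$. This is an $S'$-partial b-colouring, and it passes your local check. Now $R=\{r,p,q\}$, $F=\{1,3,4\}$, and $r$ is joined only to $4$. The assignment $r\mapsto 4$, $p\mapsto 1$, $q\mapsto 3$ saturates $R$, so you answer yes. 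But then $t_1$ sees colour $4$ on both $t_4$ and $r$ and misses colour $3$. In fact no extension exists:
\begin{itemize}
\item $r$ cannot get $1$ or $3$, since it is adjacent to $t_1$ and $t_3$;
\item it cannot get $2$, since its class would contain $s$ and $r$;
\item it cannot get $4$, as just seen.
\end{itemize}
So your algorithm can answer yes on a no-instance, and the colouring it outputs need not be a b-colouring.

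\textbf{The repair.} Join $r$ and $j$ in $B$ only if $rt_j\notin E(G)$ \emph{and} no vertex of $T$ is adjacent to both $r$ and $t_j$. This extra condition is necessary by Proposition~\ref{p-dense}~(ii). With it, your counting argument becomes valid and the rest of your proof goes through.

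\textbf{Comparison with the paper.} The corrected condition is exactly the edge condition of the paper's graph $G^*$. Vertices dominating $G[T]$ have no neighbours outside $T$, so "no common neighbour in $T$" coincides with the paper's "no common neighbour in $(T_2\cup T')\setminus\{u\}$". The paper splits $T$ into three parts:
\begin{itemize}
\item $T_1$, the vertices dominating $G[T]$;
\item $T'$, the vertices whose colour appears on $S'$;
\item $T_2$, the rest.
\end{itemize}
It notes that colours of $T_1$ are unusable on $S=\delta T\setminus S'$; in your corrected $B$ these colours simply become isolated. The paper then asks for a perfect matching between $T_2$ and $S$, and treats $T_2=\emptyset$ as a separate case. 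Your corrected version needs only an $R$-saturating matching, since the counting argument already forces every colour of $T_2$ to appear on $R$. It also avoids the case split.
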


\begin{proof}
Let $m=m(G)$, and let $T=\{u_1,\ldots,u_m\}$.  As $c'$ is an $S'$-partial b-colouring of $G$, we may assume that $c'(u_i)=i$ for every $i\in \{1,\ldots,m\}$.
We let $T'$ 
denote the set of 
vertices in $T$ that have the same colour as a vertex of $S'$. 
We let $T_1=\{v\in T : v\mbox{ is a dominating vertex of }G[T]\}$, so for every $u\in T_1$, every vertex of $T\setminus \{u\}$ is adjacent to~$u$. 

We claim that $T_1\cap T'=\emptyset$. For a contradiction, suppose that there exists a vertex $u\in T_1\cap T'$. Then, as $u\in T'$, there exists a vertex $s\in S'$ with colour $c'(u)$. Since $u$ and $s$ have the same colour, $u$ and $s$ are not adjacent. As $S'\subseteq \delta T$, this means that there exists a vertex $u'\in T\setminus \{u\}$ that is adjacent to $s$. However, now $u'$ is adjacent to both $u$ and $s$, contradicting the fact that $c'$ is an $S'$-partial b-colouring of $G$.

We set $T_2=T\setminus (T_1\cup T')$. Note that $T=T_1\cup T_2\cup T'$, and that the sets $T_1$, $T_2$ and $T'$ are pairwise disjoint. Moreover, we can determine $T_1$, $T_2$ and $T'$ in polynomial time.

\medskip
\noindent
First, we assume that $T_2=\emptyset$, so $T=T_1\cup T'$.  We check, in polynomial time, if $S'=\delta T$.

\medskip
\noindent
{\bf Case 1.}  $S'\neq \delta T$.\\
We claim that $c'$ has no b-precolouring extension in $G$.
For a contradiction, suppose that $c$ is a b-precolouring extension of $c'$ in $G$.
As $S'\subseteq \delta T$ and $S'\neq \delta T$, we find that $\delta T\setminus S'$ contains a vertex $s$. As $c$ is a b-precolouring extension, $c$ is a tight b-colouring of $G$ by Observation \ref{o-precolext}. Hence, from Proposition~\ref{p-dense}~(iii) it follows that $c(s)$ is not a colour of a vertex in $T_1$.
We recall that $1\leq c(s)\leq m$ and that by definition, $c'$ uses all $m$ colours on the $m$ vertices of $T$. Hence, $c(s)=c(u)$ for some vertex $u\in T'$. From the definition of $T'$ it follows that $c'(u)=c'(s')$ for some $s'\in S'$. However, as $c$ extends $c'$, we also have that $c(s)=c(u)=c'(u)=c'(s')=c(s')$. In other words, $c$ has a colour class with at least two vertices of $\delta T$, namely $s$ and $s'$, that contains a vertex, namely $s$, of $\delta T\setminus S'$. This contradicts our assumption that $c$ is a b-precolouring extension of $c'$ in $G$.

\medskip
\noindent
{\bf Case 2.} $S'=\delta T$.\\
Let $u\in T$. As $c'$ is an $S'$-partial b-colouring, $u$ has exactly one neighbour with colour $c'(u')$ for every $u'\in T'\setminus \{u\}$ by definition.
As $T_1\cap T'=\emptyset$ and every vertex of $T_1$ dominates $G[T]$, we also find that $u$ has exactly one neighbour with colour $c'(u')$ for every $u'\in T_1\setminus \{u\}$.
As $c'$ uses $m$ colours on $G[T]$, we conclude that $c'$ is a tight b-colouring of 
$G[T\cup S']=G[T\cup \delta T]$.

Due to the above, we can extend $c'$ to a 
b-precolouring extension $c$ in $G$ as follows. Set $c(u)=c'(u)$ for all $u\in T\cup \delta T$.
As every vertex $w\in V(G)\setminus (T\cup \delta T)$ is not dense, $w$ has degree at most~$m-2$ in $G$. Hence, we can greedily colour (in $c$) the vertices of $V(G)\setminus (T\cup \delta T)$ one by one. Since $\delta T\setminus S'=\emptyset$, the second property of Definition~\ref{d-extension} is now satisfied for $c$ as well.
Moreover, we found $c$ in polynomial time. 
\begin{figure}
\centering
\scalebox{0.55}{\begin{tikzpicture}[
    v_node/.style={circle, fill=black, minimum size=12pt, inner sep=0pt},
    w_node/.style={circle, fill=white, minimum size=12pt, inner sep=0pt, draw=black, line width=1.2pt},
    dot_node/.style={draw=none},
    edge_style/.style={line width=1.2pt},
    every label/.append style={font=\Large},
    custom_dotted/.style={dash pattern=on 0pt off 10pt, line cap=round},
]
    
    \node[v_node, red] (u1) {};
    \node[v_node, green, right=1cm of u1] (u2) {};
    
    \node[dot_node, right=0.5cm of u2] (dots) {\Huge$\cdots$};
    \node[v_node, blue, right=0.5cm of dots] (u3) {};
    \node[draw, line width=1.5pt, rounded corners, fit=(u1) (u3), inner sep=0.5cm, label={[xshift=-1cm]above left:$T_1$}] (sq1) {};

    \node[left=0.5cm of sq1] (Tlabel) {\Large $T$};
    \node[below=4cm of Tlabel] {\Large $\delta T$};

    \node[v_node, cyan, right=3cm of u3] (v1) {};
    \node[v_node, magenta, right=1cm of v1] (v2) {};
    \node[dot_node, right=0.4cm of v2] (vdots) {\Huge$\cdots$};
    \node[v_node, yellow, right=0.5cm of vdots] (v3) {};
    \node[draw, line width=1.5pt, rounded corners, fit=(v1) (v3), inner sep=0.5cm, label={above:$T'$}] (sq2) {};

    \node[v_node,orange, right=3cm of v3] (w1) {};
    \node[v_node,violet, right=0.5cm of w1] (w2) {};
    \node[dot_node, right=0.5cm of w2] (wdots) {\Huge$\cdots$};
    \node[v_node, brown, right=0.5cm of wdots] (w3) {};
    \node[draw, line width=1.5pt, rounded corners, fit=(w1) (w3), inner sep=0.5cm, label={[xshift=1cm]above right:$T_2$}] (sq3) {};

    \node[w_node, cyan, below=3cm of v1] (s1) {};
    \node[w_node, magenta, right=1cm of s1] (s2) {};
    \node[w_node, yellow, right=2cm of s2] (s3) {};
    \node[dot_node, below=0.25cm of s1, scale=1.5, transform shape] (s1dots) {\Huge$\vdots$};
    \node[dot_node, below=0.25cm of s2, scale=1.5, transform shape] (s2dots) {\Huge$\vdots$};
    \node[dot_node, below=0.25cm of s3, scale=1.5, transform shape] (s3dots) {\Huge$\vdots$};
    \node[w_node, cyan, below=0.5cm of s1dots] (s4) {};
    \node[w_node, magenta, below=0.5cm of s2dots] (s5) {};
    \node[w_node, yellow, below=0.5cm of s3dots] (s6) {};
    \node[draw, line width=1.5pt, rounded corners, fit=(s1) (s6), inner sep=0.5cm, label={left:$S'$}] (sq4) {};

    \node[w_node, orange, below=3cm of w1] (z1) {};
    \node[w_node, violet, right=0.5cm of z1] (z2) {};
    \node[dot_node, right=0.5cm of z2] (zdots) {\Huge$\cdots$};
    \node[w_node, brown, right=0.5cm of zdots] (z3) {};
    \node[draw, line width=1.5pt, rounded corners, fit=(z1) (z3), inner sep=0.5cm, label={right:$S$}] (sq5) {};

    \draw[edge_style, line width=3pt, bend left] (sq1) to (sq2);
    \draw[edge_style, line width=3pt, bend left=40] (sq1) to (sq3);
    \draw[edge_style, line width=1pt] (sq2.east) to (sq3.west);
    \draw[edge_style, line width=1pt] (sq2) to (sq4.north);
    \draw[edge_style, line width=1pt] (sq2) to (sq5);
    \draw[edge_style, line width=1pt] (sq3) to (sq4);
    \draw[edge_style, line width=1pt] (sq3) to (sq5.north);
    \draw[edge_style, line width=1pt] (sq3) to (sq5.north);
    \draw[edge_style, line width=1pt] (sq4) to (sq5);

\end{tikzpicture}}
\caption{The sets $T$ and $\delta T$ from the proof of Theorem \ref{t-semistrong}, where $T$  is the disjoint union of $T_1$, $T'$ and $T_2$, and $\delta T$ is the disjoint union of $S'$ and $S$. Thick edges indicate that the set $T_1$ is complete to both $T'$ and $T_2$.  
Thin edges between sets indicate edges that may or may not exist between vertex pairs. Note that $T_1$ is a clique and that no edges exist between $T_1$ and $\delta T$. All ``vertical layers'' are monochromatic and given a unique colour. That is, the $S'$-partial b-colouring $c'$ colours the vertices of $T\cup S'$, where each vertex in $T'$ has at least one non-neighbour in $S'$ of the same colour by definition. As displayed in the figure and shown in the proof, the vertices in $T_2$ and $S$ must be in one-to-one correspondence in any $b$-colouring extension of $G$ (if there exists~one).}
\label{f-semistrong}
\end{figure}

\medskip
\noindent
From now on, suppose that $T_2\neq \emptyset$. We let $S=\delta T\setminus S'$. At this point, we refer to Figure \ref{f-semistrong} for an illustration of the sets $T$ and $\delta T$. From $G$ we define a bipartite graph~$G^*$ as follows:

\begin{itemize}
\item let $T_2$ and $S$ be the two partition classes of $G^*$, and
\item add an edge between a vertex $u\in T_2$ and a vertex $s\in S$ if and only if $us\notin E(G)$ and moreover, $u$ and $s$ have no common neighbour in $(T_2\cup T')\setminus \{u\}$ in $G$.
\end{itemize}

\noindent
Note that constructing $G^*$ takes polynomial time. We now prove that $c'$ has a b-precolouring extension in $G$ if and only if $G^*$ has a perfect matching.

\medskip
\noindent
($\Rightarrow$)
Suppose that $c'$ has a b-precolouring extension $c$ in $G$. By Observation~\ref{o-precolext}, $c$ is a tight b-colouring of $G$.
 By Proposition~\ref{p-dense}~(i), the set~$T$ is exactly the set of b-chromatic vertices under~$c$. By Proposition~\ref{p-dense}~(ii), each colour class of $c$ contains exactly one vertex of $T$. We say that a colour class of $c$ is of {\it type~1} or {\it type~2} depending on whether its b-chromatic vertex from $T$ belongs to $T_1\cup T'$ or $T_2$, respectively.

By Proposition~\ref{p-dense}~(iii), no vertex of $S$ belongs to the same colour class as a vertex of $T_1$.
As $c$ is a b-precolouring extension of $c'$, no vertex of $S$ belong to the same colour class as a vertex of $T'$ either.
Hence, all vertices of $S$ are in colour classes of type~2.  As $T_2\subseteq T\setminus T_1$, every vertex $u\in T_2$ has at least one non-neighbour $u'\in T$.
As every vertex of $T_1$ is a dominating vertex of $G[T]$, we find that $u'\notin T_1$. 
Since $u'$ must still have a neighbour with colour $c(u)$, we find that $u'$ must be adjacent to a vertex $s\in \delta T$ with colour $c(u)$. As $u\in T_2$ and $T_2\cap T'=\emptyset$, we find that $s\in S$.
Since $c$ is a b-precolouring extension of $c$, 
every colour class of $c$ with at least two vertices of $\delta T$ contains no vertex of $S$. Hence, every colour class of type~2 contains exactly one vertex of $T_2$ and exactly one vertex of $S$. 

The above implies that $|S|=|T_2|$, and we may write $S=\{s_1,\ldots,s_{|T_2|}\}$ and $T_2=\{u_1,\ldots,u_{|T_2|}\}$.
Let $V_1,\ldots,V_{|T_2|}$ be the colour classes of type~2 of $G$. By symmetry we may now assume that $V_i\cap T_2=\{u_i\}$ and $V_i\cap S=\{s_i\}$ for every $i$ ($1\leq i\leq |T_2|$).

Let $1\leq i\leq |T_2|$, and let $u_j\in (T_2\cup T')\setminus \{u_i\}$.  As $u_j$ is b-chromatic, $d(u_j)=m-1$ and $c(u_i)=c(s_i)$, it follows that $u_j$ has exactly one neighbour in $\{u_i,s_i\}$. Hence, $u_i$ and $s_i$ have no common neighbour in $(T_2\cup T')\setminus \{u_i\}$. 
As a colour class is an independent set, $u_i$ and $s_i$ are not adjacent. As a consequence, $u_is_i$ is an edge of $G^*$. Moreover, as $G^*$ has partition classes $T_2$ and $S$, the set  $M=\{u_is_i : 1\leq i\leq |T_2|\}$ is a perfect matching of $G^*$. 

\medskip
\noindent
($\Leftarrow$) 
Suppose that $G^*$ has a perfect matching~$M$. As $G^*$ is bipartite with partition classes $S$ and $T_2$, this means that $|S|=|T_2|$. Hence, we may write $S=\{s_1,\ldots,s_{|T_2|}\}$ and $T_2=\{u_1,\ldots,u_{|T_2|}\}$ such that $M=\{u_is_i : 1\leq i\leq |T_2|\}$.

We assume without loss of generality that $c'(u_i)=i$ for every $i$ ($1\leq i\leq |T_2|$).
We first colour the vertices of $S$ by giving $s_i$ colour~$i$ for every $i$ ($1\leq i\leq |T_2|$).
As $T_2\cap T'=\emptyset$, no vertex in $S'$ has
the same colour as a vertex in $T_2$. Hence, no two adjacent vertices in $G[T\cup \delta T]$ are coloured the same, and moreover,
every vertex of $S$ belongs to a colour class of size~$2$, together with a vertex of $T_2$.
Since every vertex of $V(G)\setminus (T\cup \delta T)$ does not belong to $T$, it is not dense and thus has degree at most $m-2$. So, just as before, we can safely colour the vertices of $V(G)\setminus (T\cup \delta T)$ one by one in a greedy way.  Let $c$ be the resulting colouring of $G$.  We claim that $c$ is a b-precolouring extension of $c'$ in $G$.

By construction, $c$ is an extension of $c'$.
As each vertex of $S$ belongs to a colour class of size~2, together with a vertex of $T_2$, every colour class of $c$ with at least two vertices of $\delta T$ contains no vertex of $S=\delta T\setminus S'$.  This means that both properties of Definition~\ref{d-extension} are satisfied.
However, we must still show that $c$ is not only a colouring but also a b-colouring. 
In order to do this, we first note that, since $c'$ uses $m$ colours and we did not introduce any new colours, $c$ uses $m$ colours as well.
It remains to show that every vertex of $T$ is b-chromatic.

First, consider a vertex $u\in T_1$. As $u\in T_1\subseteq T$, it has degree $m-1$. By the definition of~$c'$, every vertex of $T$ belongs to a different colour class of $c'$, and thus to a different colour class of $c$. Since $T_1$ dominates $G[T]$, this means that $u$ has a neighbour in every colour class not its own. Hence, $u$ is b-chromatic.

Now, consider a vertex $u\in T_2\cup T'$. Since $T_1$ dominates $G[T]$ and $(T_2\cup T')\subseteq T$, we find that $u$ is adjacent to all vertices of $T_1$, each of which belongs to a different singleton colour class of~$c$ by construction. In other words, $u$ has exactly one neighbour with colour $c(u')$ for every $u'\in T_1$, namely $u'$.

Now suppose that $u'\in T'\setminus \{u\}$.  By definition of $T'$, $c'(u')=c'(s)$ for some $s\in S'$.  As $c'$ satisfies Property 2 of Definition \ref{d-precoldefs}, it follows that $u$ has exactly one neighbour with colour~$c'(u')$.  As $c$ is an extension of $c'$, we conclude that $u$ also has exactly one neighbour with colour $c(u')$ for every $u'\in T'\setminus \{u\}$.

It remains to show that $u$ is also picking up every colour used on $T_2\setminus \{u\}$ in $c$.
For a contradiction, suppose that $u$ is not adjacent to any vertex of $\{u_j,s_j\}$ for some $u_j\in T_2\setminus \{u\}$. 
As $c$ has $m$ colour classes and $u$ has degree $m-1$ in $G$, this means that $u$ has two neighhbours with the same colour.
Previously, we found that $u$ has exactly one neighbour with colour $c(u')$ for every $u'\in (T_1\cup T')\setminus \{u\}$.
Thus, there must exist a pair $\{u_h,s_h\}$
with $u_h\in T_2\setminus \{u,u_j\}$ such that
$u$ is adjacent to both $u_h$ and $s_h$, by the pigeonhole principle. That is, $u\in (T_2\cup T')\setminus \{u_h\}$ is a common neighbour of $u_h$ and $s_h$. However, it now follows from the construction of~$G^*$ that $u_hs_h$ is not an edge in $G^*$, and thus $u_hs_h$ cannot be an edge of $M$, a contradiction. We conclude that $u$ is b-chromatic.

\medskip
\noindent
It now suffices to check if~$G^*$ has a perfect matching, which  takes polynomial time~\cite{Ed65}.
\end{proof}

\noindent
We can now prove our polynomial-time result for $(2P_2+P_1)$-free graphs that we will use as a lemma in the proof of 
Theorem~\ref{t-tight}.
Recall that in contrast, {\sc b-Chromatic Number} is \NP-hard even for 
the smaller class of $3P_1$-free graphs by Lemma~\ref{l-3p1}~\cite{BSSV15}.

\begin{lemma}\label{l-tight2p2p1}
{\sc Tight b-Chromatic Number} is polynomial-time solvable for $(2P_2+P_1)$-free graphs.
\end{lemma}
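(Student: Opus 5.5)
The plan is to reduce to Theorem~\ref{t-semistrong} by enumerating polynomially many candidate $S'$-partial b-colourings, and to show that if $G$ has a tight b-colouring at all, then one of the enumerated candidates has a b-precolouring extension. Let $G$ be a tight $(2P_2+P_1)$-free graph with dense set $T$, $|T|=m=m(G)$. If $m$ is bounded by a constant, we can brute force. Indeed, every colour class of a tight b-colouring meets $T\cup\delta T$, and every vertex outside $T$ has degree at most $m-2$ and can be coloured greedily. Moreover, by Proposition~\ref{p-dense}(ii), each $u\in T$ sees exactly one vertex of each other class. So in every tight b-colouring $c$, the restriction of $c$ to $T\cup\delta T$ is determined by how $\delta T$ is distributed among the classes of the vertices of $T$.

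The key structural step is to bound the ``non-trivial'' part of the colouring on $\delta T$. Call a colour class \emph{heavy} if it contains at least two vertices of $\delta T$. I will argue, using $(2P_2+P_1)$-freeness, that in any tight b-colouring the heavy classes contain only $O(1)$ vertices of $\delta T$ in total, or more generally that the vertices of $\delta T$ lying in heavy classes can be guessed in polynomial time. Taking two vertices $s,s'$ of $\delta T$ in the same class as $u\in T$, both are nonadjacent to $u$. Each other vertex $w\in T$ sees at most one of $u,s,s'$. Every vertex of $T$ other than $u$ must see exactly one of the class's vertices, so the neighbourhoods of $u,s,s'$ in $T$ partition $T\setminus\{u\}$. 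With two heavy classes, one picks edges $s w$ and $s_2 w_2$ from different classes together with a suitable fifth vertex (for instance a third member of a class, nonadjacent to both edges). This produces an induced $2P_2+P_1$ unless the structure is very restricted. I expect this to yield a bound: the number of heavy classes with at least three $\delta T$-vertices, and the total size of heavy classes, is bounded by a constant $k$.

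Given such a bound, the algorithm guesses $S'$ as the set of $\delta T$-vertices in heavy classes ($n^{O(k)}$ choices), together with their colouring, which assigns each to the colour of some vertex of $T$ ($m^{O(k)}$ choices). It then checks the conditions of Definition~\ref{d-precoldefs} and runs Theorem~\ref{t-semistrong}. Correctness holds in both directions. Given a tight b-colouring $c$, the guess corresponding to $c$ is an $S'$-partial b-colouring, by Proposition~\ref{p-dense}(ii), and $c$ is a b-precolouring extension of it. Conversely, any extension is a tight b-colouring by Observation~\ref{o-precolext}.

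The main obstacle is the structural bound in the second step: showing that $(2P_2+P_1)$-freeness really limits the number of $\delta T$-vertices in heavy classes. A single heavy class may be large, for example when $u$ has many non-neighbours in $\delta T$ partitioning $N_T$. If a constant bound fails, I would instead aim to show that there is at most one large heavy class. Its members are then forced, for instance as all of $\delta T$ minus those matched to $T_2$, and only the identity of its $T$-vertex needs guessing. I would do the case analysis on whether two independent edges $sw$, $s'w'$ between $\delta T$ and $T$ exist in different classes, and use the leftover vertex from a third class or from $T_1$ as the isolated $P_1$.
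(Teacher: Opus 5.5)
\textbf{Verdict: genuine gap.} Your overall framework is the one the paper uses: fix an $S'$-partial b-colouring on $T\cup S'$ and hand it to Theorem~\ref{t-semistrong}. Your check that a tight b-colouring is a b-precolouring extension of its own restriction (with $S'$ the $\delta T$-vertices of heavy classes) is also fine. The gap is the structural step, which is the whole content of the lemma. You do not prove it, and the bound you expect is false.

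Here is a counterexample for $k\geq 2$. Take a clique $K=\{x_1,x_2,w_1,\ldots,w_k\}$ and a vertex $u$ adjacent exactly to $x_1,x_2,r_1,\ldots,r_k$. Add the edges $w_js_j$ and $r_is_j$ for all $1\le i,j\le k$.
\begin{itemize}
\item The only induced $2P_2$'s are $\{w_jw_{j'},ur_i\}$ and $\{e,r_is_l\}$ with $e$ an edge of $K$ avoiding $w_l$. Every remaining vertex sees one of their four vertices, so the graph is $(2P_2+P_1)$-free.
\item It is tight with $T=\{u\}\cup K$ and $m=k+3$, since each $r_i$ and $s_j$ has degree $m-2$.
\item The classes $\{u,s_1,\ldots,s_k\}$, $\{x_1\}$, $\{x_2\}$, $\{w_j,r_j\}$ form a tight b-colouring.
\end{itemize}
So a single heavy class can contain unboundedly many vertices of $\delta T$, and enumerating candidates for $S'$ is not polynomial. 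Your fallback is also unproved, and as stated it is circular. The auxiliary bipartite graph and its matching in Theorem~\ref{t-semistrong} are only defined once $S'$ (hence $T'$ and $T_2$) is fixed. So ``all of $\delta T$ minus those matched to $T_2$'' cannot be used to choose $S'$.

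\textbf{The missing idea.} No guessing is needed, because membership in a heavy class is forced by a local test. The way to get this is to use $(2P_2+P_1)$-freeness inside a single class, not across classes. Let $T(u,s)$ be the neighbours of $s$ in $T$ that are non-adjacent to $u$.

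\emph{Necessity.} Suppose $s\in\delta T$ shares a class of size at least $3$ with $u\in T$. Then $T(u,s)\neq\emptyset$, and $T(u,s)$ is complete to $T\setminus(T(u,s)\cup\{u\})$. To see this, take a non-adjacent pair $w\in T(u,s)$ and $w'$ outside $T(u,s)\cup\{u\}$.
\begin{itemize}
\item If $w'u\notin E(G)$, then $w'$ must see another class member $s'$, and $\{w,s,w',s',u\}$ induces $2P_2+P_1$.
\item If $w'u\in E(G)$, then $\{u,w',w,s,s''\}$ induces $2P_2+P_1$ for any third class member $s''$.
\end{itemize}

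\emph{Sufficiency.} Suppose $us\notin E(G)$, $T(u,s)\neq\emptyset$ and this completeness holds. Then every tight b-colouring $c$ must give $s$ the colour of $u$. It cannot use the colour of a vertex of $T(u,s)$, since $s$ is adjacent to it. It cannot use the colour of any $x\in T\setminus(T(u,s)\cup\{u\})$, since each vertex of $T(u,s)$ would then see two vertices of colour $c(x)$, contradicting Proposition~\ref{p-dense}(ii).

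Hence the set $S'$ of forced vertices and its colouring are unique up to renaming colours, and every $\delta T$-vertex of a heavy class lies in $S'$. In the example above, exactly $s_1,\ldots,s_k$ are forced. After checking that this $c'$ is an $S'$-partial b-colouring, a single call to Theorem~\ref{t-semistrong} decides the instance.
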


\begin{proof}
Let $G=(V,E)$ be a tight $(2P_2+P_1)$-free graph. We write $m=m(G)$. Let $T=\{u_1,\ldots,u_m\}$ be the set of dense vertices of $G$, so $|T|=m$ and every $u_i$ $(1\leq i\leq m)$ is a maximum-degree vertex of $G$ with degree (exactly) $m-1$. 

\begin{figure}
    \centering
    \includegraphics[width=0.5\linewidth]{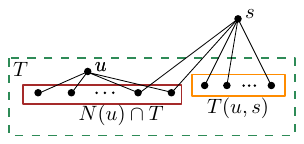}
    \caption{An illustration of a set $T$ of dense vertices containing the disjoint subsets $N(u) \cap T$ and $T(u,s)$, where $u\in T$ and $s\in S=\delta T$, as per the proof of Lemma \ref{l-tight2p2p1}.  Note that $u$ is not adjacent to~$s$, and by definition, no vertex in $T(u,s)$ is adjacent to $u$.}
    \label{fig:Lem-17-T(u,s)}
\end{figure}

We let $S=\delta T$ be the set of neighbours of vertices of $T$ that do not belong to $T$. For two non-adjacent vertices $s\in S$
and $u\in T$, we let $T(u,s)$ consist of all neighbours of $s$ in $T$ that are not adjacent to $u$ (see Figure~\ref{fig:Lem-17-T(u,s)}); note that $T(u,s)=\emptyset$ is possible.

We will begin by showing a structural claim. Assume that $G$ admits a tight b-colouring $c$. By
Proposition~\ref{p-dense}~(i), $T$ is the set of b-chromatic vertices under~$c$. Let $V_1,\ldots,V_m$ be the $m$ colour classes of $c$.  By Proposition~\ref{p-dense}~(ii), we may assume without loss of generality that $V_i\cap T = \{u_i\}$.

\begin{claim}\label{c-ui}
If $|V_i|\geq 3$ for some $i$ $(1\leq i\leq m)$ and $s\in S\cap V_i$, then $T(u_i,s)$ is complete to $T\setminus (T(u_i,s)\cup \{u_i\})$.
\end{claim}

\begin{claimproof}
Let $|V_i|\geq 3$ for some $i$ $(1\leq i\leq m)$, say $i=1$. Let $s\in S\cap V_1$. We note that
$s$ and $u_1$ are not adjacent,
since $V_1$ is an independent set containing $s$ and $u_1$, and thus, the set $T(u_1,s)$ 
is defined.
For a contradiction, assume that $T(u_1,s)$ contains a vertex, say $u_2$, that is not adjacent to some vertex, say $u_3$, in $T\setminus (T(u_1,s)\cup \{u_1\})$. 
We distinguish two cases.

First suppose that $u_3$ is not adjacent to $u_1$. Since $u_3\notin T(u_1,s)$ and $u_3$ is b-chromatic by Proposition~\ref{p-dense}~(i), this means that $u_3$ is adjacent to a vertex $s'\in S\cap V_1$. Since $V_1$ is an independent set, $u_1,s,s'$ are pairwise non-adjacent. 
By Proposition~\ref{p-dense}~(ii), we find that $u_2$ and $u_3$ have a unique neighbour in $V_1$. As $u_2$ and $u_3$ are adjacent to $s\in V_1$ and $s'\in V_1$, respectively, this means that $u_2$ is  adjacent neither to $u_1$ nor to $s'$, and $u_3$ is adjacent neither to $u_1$ nor $s$. However, now $\{u_2,s,u_3,s',u_1\}$ induces a $2P_2+P_1$ in $G$, a contradiction.

Now suppose that $u_3$ is adjacent to $u_1$. Since $V_1$ has size at least~$3$, there exists a vertex $s'\in V_1\setminus \{u,s\}$.
As $u_3$ has a unique neighbour in $V_1$ due to Proposition~\ref{p-dense}~(ii), and $u_3$ is adjacent to $u_1\in V_1$, we find that $u_3$ is adjacent neither to $s$ nor to $s'$.
As $u_2$ has a unique neighbour in $V_1$ for the same reason, and $u_2$ is adjacent to $s\in V_1$, we find that $u_2$ is adjacent neither to $u_1$ nor to $s'$.
As $V_1$ is an independent set, $u_1,s,s'$ are pairwise non-adjacent. However, now $\{u_1,u_3,u_2,s,s'\}$ induces a $2P_2+P_1$ in $G$, a contradiction.
\end{claimproof}

\noindent
We now present our polynomial-time algorithm, which consists of four steps.

\medskip
\noindent
{\bf Step 1.}
 We colour all the $m$ vertices of $T$ by giving them a unique colour from $\{1,\ldots,m\}$. 
 
 \medskip
 \noindent
 Step~1 takes polynomial time and is correct due to Proposition~\ref{p-dense}~(ii).

\medskip
\noindent
{\bf Step 2.} We check for each $u\in T$ and each $s\in S$ with $us\notin E(G)$, whether $T(u,s)$ is complete to $T\setminus (T(u,s)\cup \{u\})$. If so, we give $s$ the same colour as $u$. 

\medskip
\noindent
Step 2 takes polynomial time. It is correct for the following reasons. Suppose $T(u,s)$ is complete to $T\setminus (T(u,s)\cup \{u\})$ for some $u\in T$ and $s\in S$ with $us\notin E(G)$. We cannot give $s$ the same colour as a vertex in $T(u,s)$, because $s$ is adjacent to every vertex of $T(u,s)$ by definition. Moreover, we cannot give $s$  the same colour as a vertex in $T\setminus (T(u,s)\cup \{u\})$. Otherwise, as $T(u,s)$ is complete to $(T\setminus (T(u,s)\cup \{u\}))\cup \{s\}$, the vertices from $T(u,s)$ would be adjacent to two vertices of the same colour. As $T(u,s)\subseteq T$, this is not possible due to Proposition~\ref{p-dense}~(ii). Hence, as $s$ must receive a colour from $\{1,\ldots,m\}$, we have no other choice than to give $s$ the same colour as~$u$. Also, by the same reasoning, if the same vertex $s$ is coloured more than once during Step 2 then we return that $G$ has no tight b-colouring.

\medskip
\noindent
Let $S'$ be the subset of vertices of $S$ that we have already coloured. Let $c'$ be the colouring we constructed so far, so $c'$ is a colouring of $G[S'\cup T]$.
Note that we obtained both $S'$ and $c'$ in polynomial time. 

\medskip
\noindent
{\bf Step 3.} We check if $c'$ is an $S'$-partial b-colouring of $G$. If not, then we return that $G$ has no tight b-colouring.

\medskip
\noindent
Step~3 takes polynomial time, as from Step~1, we know that $c'(u)\neq c'(u')$ for every two distinct vertices $u,u'\in T$, and
it takes polynomial time to check if

\begin{itemize}
\item $c'$
is a colouring of $G[S'\cup T]$,
\item for every $u\in T$ with $c'(u)=c'(s)$ for some $s\in S'$, it holds that every vertex of $T\setminus \{u\}$ has exactly one neighbour with colour $c'(u)$.
\end{itemize}

\noindent
Step~3 is correct for the following reasons. First, we have already shown above that the b-colouring that we constructed so far is unique (modulo permutation of colours). Hence, if two adjacent vertices in $G[S'\cup T]$ received the same colour, then we may safely output that $G$ is a no-instance. By Proposition~\ref{p-dense}~(ii), we may do the same if there is a vertex in $T$ that is adjacent to at least two vertices with the same colour. 

Now suppose that there exists a vertex $u\in T$ with $c'(u)=c'(s)$ for some $s\in S'$, such that some vertex $u'\in T\setminus \{u\}$ is not adjacent to any vertex with colour~$c'(u)$. In particular, this means that $u$ and $u'$ are not adjacent. If $G$ still has a tight b-colouring, then $u'$ must be adjacent to some vertex $s^*\in S\setminus S'$ that will be given colour~$c'(u)$ later, due to Proposition~\ref{p-dense}~(i). Hence, every tight b-colouring of $G$ (if there exists one) will have at least three vertices with colour $c'(u)$. However, by Claim~\ref{c-ui}, $T(u,s^*)$ must be complete to $T\setminus (T(u,s^*)\cup \{u\})$. This means that $s^*\in S'$, a contradiction.

\medskip
\noindent
Suppose we have not returned a no-answer yet. 

\medskip
\noindent
{\bf Step 4.} We check if $G$ contains a b-precolouring extension~$c$ of $c'$. If so, then we return $c$, 
and otherwise we return that $G$ has no tight b-colouring.

\medskip
\noindent
Step~4 takes polynomial time by Theorem~\ref{t-semistrong} and is correct for the following reasons. 
For a contradiction, assume that $G$ has a tight
b-colouring $c^*$ 
that is not a b-precolouring extension of $c'$.
We already deduced that $c'$ is unique (modulo permutation of colours). Hence, we may assume without loss of generality that $c$ is an extension of $c'$. As $c^*$ is not a b-precolouring extension, this means that $c^*$ must have a colour class of size at least~$3$ that contains a vertex $s\in S\setminus S'$. Say the vertices in this colour class have colour~$i$ ($1\leq i\leq m$).
By Proposition~\ref{p-dense}~(ii), we find that $T$ contains exactly one vertex $u$ with $c^*(u)=i$.
However, as there are at least three vertices of colour~$i$, we find that $T(u,s)$ must be complete to $T\setminus (T(u,s)\cup \{u\})$ by Claim~\ref{c-ui}.
This means that $s\in S'$, a contradiction.

\medskip
\noindent
As we showed that Steps~1--4 each take polynomial time, our algorithm runs in polynomial time. Correctness follows from the correctness proofs given for each of these four steps.
\end{proof}

\noindent
We now give an algorithm for $(P_3+P_1)$-free graphs. In it, we use our algorithm for $(2P_2+P_1)$-free graphs as a subroutine. As such, also this result relies (indirectly) on Theorem~\ref{t-semistrong}.

\begin{lemma}\label{l-tightp13}
{\sc Tight b-Chromatic Number} is polynomial-time solvable for $(P_3+P_1)$-free graphs.
\end{lemma}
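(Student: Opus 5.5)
We use Corollary~\ref{c-olariu} to split a $(P_3+P_1)$-free graph into co-components. Let $G$ be a tight $(P_3+P_1)$-free graph with dense set $T$, $|T|=m=m(G)$. Compute the co-components $G_1,\ldots,G_r$ of $G$. By Corollary~\ref{c-olariu}, each $G_j$ is $3P_1$-free or a disjoint union of complete graphs. Since $3P_1\ssi 2P_2+P_1$, if every co-component is $3P_1$-free then $G$ itself is $3P_1$-free. Indeed, an induced $3P_1$ is an independent set, and an independent set lies inside a single co-component, because distinct co-components are complete to each other. In that case we simply apply Lemma~\ref{l-tight2p2p1}. More generally, every vertex of $G_j$ is adjacent to all of $V(G)\setminus V(G_j)$. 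Hence every colour class of any colouring lies inside one co-component. Colourings of $G$ therefore correspond to tuples of colourings of the $G_j$ on pairwise disjoint colour sets, and $\chi(G)=\sum_j\chi(G_j)$.

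Next we analyse what a tight b-colouring $c$ looks like co-component by co-component. By Proposition~\ref{p-dense}, each colour class contains exactly one dense vertex. So the colour classes inside $G_j$ number exactly $m_j:=|T\cap V(G_j)|$, and $c$ restricts to an $m_j$-colouring of $G_j$ with one vertex of $T_j=T\cap V(G_j)$ per class. A vertex $u\in T_j$ is adjacent to all $n-|V(G_j)|$ vertices outside $G_j$, and these carry the $m-m_j$ colours not used on $G_j$. For $u$ to have degree $m-1$ and a unique neighbour in each other class, all classes outside $G_j$ met by $u$ must have exactly one vertex. This forces $n-|V(G_j)|=m-m_j$ whenever $T_j\neq\emptyset$. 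Inside $G_j$, $u$ has degree $m_j-1$ and must be b-chromatic with respect to the $m_j$ classes there.

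This gives the following algorithm. First compute $m$, $T$ and the $G_j$. If two distinct co-components both contain dense vertices, the counting condition forces every colour class outside each of them to be a singleton. Hence all colour classes are singletons and $c$ is a proper colouring with $n=m$ colours, so $G$ is complete. This is checked directly: $G$ is a yes-instance exactly when $G$ is complete. Co-components with $T_j=\emptyset$ need singleton classes, but each class contains a dense vertex, so such co-components cannot exist unless empty. Therefore either $G$ is complete, or all of $T$ lies in a single co-component $G_1$ and $G=G_1$, i.e.\ $G$ is co-connected. In the co-connected case, Corollary~\ref{c-olariu} says $G$ is $3P_1$-free, handled by Lemma~\ref{l-tight2p2p1}, or a disjoint union of complete graphs. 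For a disjoint union of cliques, the answer is read off directly: every component must be a clique on $m$ vertices containing dense vertices, or a smaller clique, and we check by a simple count (or via bounded clique-width, Lemma~\ref{l-p4}, since such a graph is $P_4$-free).

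The main obstacle is making the counting step rigorous. We must check that a dense vertex's neighbours outside its co-component are exactly singleton colour classes, and we must handle co-components without dense vertices carefully. If that reduction has a gap, we would fall back on a direct argument: $G$ is the join of its co-components, and a tight b-colouring restricts to a tight b-colouring of each $G_j$ that contains dense vertices. We would then test each co-component separately with Lemma~\ref{l-tight2p2p1} or with the $P_4$-free case.
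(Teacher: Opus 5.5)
Your proof is correct, but it is organised differently from the paper's.

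\textbf{The paper's route.} The paper keeps all co-components and works with degree counts alone. It shows that every $T_i$ is non-empty and that $m_i\ge p_i+1$, where $p_i$ is the degree of the dense vertices inside $G_i$. It rejects the instance if some $m_i>p_i+1$, and otherwise deduces that each $G_i$ is tight with dense set $T_i$. It then proves a join lemma: $G$ has a tight b-colouring if and only if every $G_i$ does. Finally it applies Lemma~\ref{l-tight2p2p1} or Lemma~\ref{l-p4} to each co-component.

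\textbf{Your route.} You apply Proposition~\ref{p-dense}(ii) to a dense vertex $u\in T_j$. Since $u$ is complete to $V(G)\setminus V(G_j)$, every colour class outside $G_j$ must be a singleton. Every non-empty co-component contains a dense vertex, because each colour class contains one. So two co-components already force all classes to be singletons. Hence a tight graph that is not co-connected is a yes-instance exactly when it is complete.

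This step is rigorous as written, so the hedge in your last paragraph is unnecessary. The fallback sketched there would also be incomplete. The converse direction of the join decomposition needs each $G_j$ to be tight with dense set $T_j$, and that is precisely what the paper's condition $m_j=p_j+1$ secures.

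\textbf{What each approach buys.} Your argument is shorter, and it shows that the paper's multi-co-component case is degenerate. Indeed, once $m_i=p_i+1$ holds for all $i$ and $r\ge 2$, the identity $m-1=p_i+\sum_{k\neq i}n_k$ forces $m_k=n_k$ for all $k$, so $G$ is complete there too. In exchange, the paper obtains a modular join lemma whose preliminary checks are colouring-free degree tests. In both proofs, $(P_3+P_1)$-freeness is used only through Corollary~\ref{c-olariu} in the co-connected case.
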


\begin{proof}
Let $G$ be a $(P_3+P_1)$-free tight graph with co-components $G_1, \ldots, G_r$ for some $r\geq 1$. We write $m=m(G)$ and $n=|V(G)|$.
Let $T$ be the set of dense vertices of $G$, so $|T|=m$ and every vertex in $T$ has degree $m-1$ in $G$. For each $i$ ($1\leq i\leq r$), we set $T_i =T \cap V(G_i)$, and write $m_i=|T_i|$ and $n_i=|V(G_i)|$, so
\[m=m_1+\cdots + m_r\; \mbox{and}\; n=n_1+\cdots+n_r.\]
By definition, $V(G_i)$ is complete to $V(G_j)$ if $i\neq j$.
As $G$ is tight, this means that
for each $i$ ($1\leq i\leq r$), the vertices in $T_i$ are exactly the maximum-degree vertices of $G_i$ and have the same degree in $G_i$, which we denote by $p_i$. Note that we can compute the sets~$T_i$ and degrees $p_i$ $(1\leq i\leq r)$ in polynomial time.

We now prove the following two claims.

\begin{claim}\label{cl-P3P1-1}
    It holds that $m_i\geq p_i+1$ for each $i$ ($1\leq i\leq r$).
\end{claim}
\begin{claimproof}
    We first show that $T_i\neq \emptyset$ for each $i$ ($1\leq i\leq r$). For a contradiction, say $T_1=\emptyset$. Let $u\in V(G_1)$. As $u$ is complete to $V(G_2)\cup \cdots \cup V(G_r)$ and $T\subseteq V(G_2)\cup \cdots \cup V(G_r)$, we find that $u$ has at least degree $m$ in $G$. This contradicts the fact that $G$ has maximum degree $m-1$.
    
    We will now show that $m_i\geq p_i+1$ for each $i$ ($1\leq i\leq r$). By symmetry, it suffices to show this for $i=1$. 
    Since $T_1\neq \emptyset$, there exists a vertex $u\in T_1$. Recall that $u$ has degree $m-1$ in $G$. As $u$ is complete to $V(G_2)\cup \cdots \cup  V(G_r)$, this implies that $m-1=p_1+n_2+\cdots +n_r$, or equivalently,
    \[|T|=m=p_1+1+n_2+ \cdots +n_r.\]
    As $T$ can contain at most $n_2+\cdots + n_r$ vertices of $V(G_2)\cup \cdots \cup V(G_r)$, this means that $T$ must have at least $p_1+1$ vertices in $G_1$, that is, $m_1\geq p_1+1$.
\end{claimproof}

\begin{claim}\label{cl-P3P1-2}
    If $m_i>p_i+1$ for some $i$ $(1\leq i\leq r)$, then $G$ has no tight b-colouring.
\end{claim}
\begin{claimproof}
    Suppose without loss of generality that $m_1>p_1+1$. Let $u\in T_1$. For a contradiction, suppose that $G$ has a tight 
    b-colouring $c$. By Proposition~\ref{p-dense}, the vertices of $T$ are b-chromatic and assigned different colours. As $m_1>p_1+1$, we now find that $u$ misses a colour $x$ of one of the vertices of $T_1$ in its neighbourhood in $G_1$. As $V(G_1)$ is complete to $V(G_2)\cup \cdots \cup V(G_r)$, we find that $x$ is not used on $V(G_2)\cup \cdots \cup V(G_r)$. Hence, $u$ misses $x$ even in its neighbourhood $N(u)$ in~$G$, contradicting the fact that $u\in T_1$ is b-chromatic under $c$.
\end{claimproof}

\noindent
We can check if $m_i>p_i+1$ for some 
$i$ $(1\leq i\leq r)$ in polynomial time. If so, we return that $G$ has no 
tight b-colouring.\footnote{An example where this happens is where $G$ is the tight $(P_3+P_1)$-free graph consisting of two co-components $G_1=2P_1$ and $G_2=K_3+P_1$. In this example, $T$ consists of $V(G_1)$ and the three vertices of the triangle in $G_2$, so $m=5$, $m_1=2$ and $m_2=3$, but $p_1=0$. It can be readily checked that $G$ has no b-colouring with five colours.}
Otherwise, we use Claim~\ref{cl-P3P1-1} to find that $m_i=p_i+1$ for each $i$ ($1\leq i\leq r$). 
As the vertices in $T_i$ are exactly the vertices of maximum degree in $G_i$, this implies the following:

\begin{claim}\label{cl-P3P1-3}
    Every $G_i$ $(1\leq i\leq r)$ is tight and has $T_i$ as its set of dense vertices with $|T_i|=m_i$. 
\end{claim}

\noindent
We now prove one more claim:

\begin{claim}\label{cl-P3P1-4}
    $G$ has a tight b-colouring if and only if each $G_i$ has a tight b-colouring.
\end{claim}
\begin{claimproof}
    ($\Rightarrow$) Suppose that $G$ has a tight b-colouring $c$. Let $S_i$ be the set of colours used by $c$ in $G_i$. As $T_i$ is the set of dense vertices of $G$ in $G_i$, every vertex of $T_i$ is assigned a unique colour of $S_i$, and $|S_i|=m_i$. Since $G_i$ is complete to $G_j$, it follows that $S_i\cap S_j=\emptyset$, for $1\leq i, j\leq r$ and $i\neq j$. Hence, each vertex in $T_i$ picks up the colours of $S_i$ from its neighbours inside $G_i$. Therefore, the restriction of $c$ to $V(G_i)$ is a tight b-colouring of $G_i$.
    
    \medskip
    \noindent
    ($\Leftarrow$) Suppose that $G_i$ has a tight b-colouring $c_i$ for each $i$ ($1\leq i\leq r$). We will construct a tight b-colouring $c$ of $G$. Now, assume that $c_i$ is using colours $1,\dots, m_i$ in $G_i$, for each $i$ ($1\leq i\leq r$). Next, since $V(G_i)$ is complete to $V(G_j)$, for $1\leq i,j\leq r$ and $i\neq j$, it follows that $G_i$ and $G_j$ must have different colour sets in any b-colouring of $G$. Thus, let $d_i=\sum_{j=1}^i m_j$, for each $i$ ($1\leq i\leq r$), and $d_0=0$. Then, assign colours $c(v)=c_i(v)+d_{i-1}$, where $v\in V_i$, for $1\leq i\leq r$. Note that this assignment of colours allows $G_i$ to have colour set $\{d_{i-1}+1,\dots, d_i\}$, for $1\leq i\leq r$. Now, let $v\in V_i$ be a b-chromatic vertex under $c_i$ in $G_i$, for some $i$ ($1\leq i\leq r$). Since $V(G_i)$ is complete to $V(G_j)$, for $1\leq j\leq r$ and $i\neq j$, it follows that $v$ is picking up all the colours in $\{d_{j-1}+1, \dots, d_j\}$, for $j=1,\dots, r$ and $j\neq i$, and therefore $v$ is b-chromatic under $c$. As $m=\sum_{i=1}^r m_i$, every b-chromatic vertex with respect to $c_i$ in $G_i$ is b-chromatic with respect to $c$ in $G$. We conclude that $c$ is a tight b-colouring of $G$.
\end{claimproof}

\medskip
\noindent
Due to Claims~\ref{cl-P3P1-3} and~\ref{cl-P3P1-4}, we may consider the 
graphs $G_1,\ldots, G_r$ separately. 
So, let $G_i$ be the co-component of $G$ under consideration. 
By Corollary~\ref{c-olariu}, $G_i$ is $3P_1$-free or a disjoint union of complete graphs.
In the first case, we apply Lemma~\ref{l-tight2p2p1}, as every $3P_1$-free graph is $(2P_2+P_1)$-free.
In the second case, we apply Lemma~\ref{l-p4}, as every disjoint union of complete graphs is $P_4$-free.
\end{proof}

\noindent
Now, we present  
three hardness results for {\sc Tight b-Chromatic~Number}. The first one is for line graphs and due to Campos et al.~\cite{CLMSSS15} who proved it for {\sc b-Chromatic Number}.

\begin{lemma}[\cite{CLMSSS15}]\label{l-claw}
{\sc Tight b-Chromatic Number} is \NP-complete for line graphs, and thus for claw-free graphs.
\end{lemma}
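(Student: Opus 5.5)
The plan is to re-examine the reduction of Campos et al.~\cite{CLMSSS15}, which shows that {\sc b-Chromatic Number} is \NP-hard for line graphs, and verify two things. First, every graph it produces is tight. Second, the target number of colours is exactly $m(G)$. Together these mean the question asked is whether the tight graph admits a b-colouring with $m(G)$ colours, which is precisely {\sc Tight b-Chromatic Number}. The statement for claw-free graphs then follows immediately, since every line graph is claw-free.

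\textbf{Structure of the reduction.} The reduction of Campos et al.\ starts from an \NP-complete edge-colouring problem, such as deciding whether a cubic graph is $3$-edge-colourable. From the instance it builds a multigraph or graph $F$ and asks whether $\varphi(L(F))=m(L(F))$. In $L(F)$, a vertex $e=xy$ has degree $d_F(x)+d_F(y)-2$. So the dense vertices of $L(F)$ are the edges of $F$ whose end-degree sum is large.

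\textbf{Step 1: dense vertices.} I will compute $m(L(F))$ from the construction and check that exactly $m(L(F))$ edges of $F$ satisfy $d_F(x)+d_F(y)-2\ge m(L(F))-1$. I will also check that each of these has degree exactly $m(L(F))-1$ and that no other vertex reaches that degree. This is plausible because such constructions typically attach a fixed number of pendant or high-degree gadget edges, so that the intended b-chromatic vertices have equal degree $m-1$ and every other edge has strictly smaller degree.

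\textbf{Step 2: the target value.} I will confirm that the reduction's yes/no question is ``$\varphi(L(F))\ge m(L(F))$''. By Observation~\ref{o-0} this is equivalent to asking for a b-colouring with exactly $m(L(F))$ colours, i.e.\ a tight b-colouring.

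\textbf{Main obstacle and fallback.} The main obstacle is Step 1. If the original construction has some extra vertex of degree at least $m-1$, or some dense vertex of degree larger than $m-1$, the graph is not tight and the argument fails. In that case I would first try to repair it locally. For an extra dense vertex, I would delete or split the offending gadget edges; alternatively, I would pad the intended dense edges with pendant edges in $F$ so they strictly dominate in degree. Each such pendant edge adds one neighbour in the line graph, and pendant edges themselves have low degree in $L(F)$. Any such modification must be checked to preserve the equivalence in Campos et al.'s correctness proof. Since, by Proposition~\ref{p-dense}, any $m$-colouring b-colouring must use exactly the dense vertices as b-chromatic vertices, I expect their argument already only uses the intended $m$ edges as b-chromatic vertices, so the equivalence should survive. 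Membership in \NP\ is immediate, since a tight b-colouring can be verified in polynomial time.
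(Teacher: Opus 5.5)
Your plan matches the paper's proof. The paper also just inspects the graph $H$ built in the proof of Theorem~1 of Campos et al.\ for {\sc b-Chromatic Index}, sets $G=L(H)$, and observes that $G$ is tight, that the target number of colours is $m(G)$, and that the claw-free case follows because line graphs are claw-free. According to the paper that construction is already tight as given, so your fallback of repairing the gadgets is not needed.
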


\begin{proof}
 The result follows by inspection of the graph constructed in the proof of Theorem~1 of Campos et al.~\cite{CLMSSS15}.  Here, the authors construct a graph $H$ as an instance of {\sc b-Chromatic Index}, which is the counterpart of {\sc b-Chromatic Number} for edge colouring.  If we let $G=L(H)$, it may be verified from the authors' construction and their accompanying arguments that $G$ is tight, and indeed that the target number of colours to be used in the b-colouring is equal to $m(G)$. Hence, $G$ is an instance of {\sc Tight b-Chromatic Number}.
\end{proof}

\noindent
Our next two hardness results are obtained by modifying a reduction from the proof of \NP-hardness of {\sc Tight b-Chromatic Number} in 
graphs that are the disjoint union of a connected split graph and three stars 
given by Havet, Sales and Sampaio~\cite{HSS12}, which we will recall briefly. A {\it $3$-edge colouring} in a graph $G=(V,E)$ is a mapping $c:E\to  \{1,2,3\}$ such that $c(e)\neq c(f)$ for every pair of edges that have a common end-vertex. The {\sc $3$-Edge Colouring} problem is to decide if a given graph has a $3$-edge colouring and is known to be \NP-complete even for cubic graphs~\cite{Ho81}.

The reduction in~\cite{HSS12} is from {\sc $3$-Edge Colouring in cubic graphs}.
First, let $G=(V,E)$ be a cubic graph with vertex set $V=\{u_1,\dots, u_n\}$ and edge set $E=\{e_1,\dots, e_m\}$. Now, define 
\begin{itemize}
\item $V'=\{u_{n+r}:1\leq r\leq 3\}$, and
\item $W=\{w_{n+r}^k:1\leq r\leq 3 \wedge 1\leq k\leq n+2\}$.
\end{itemize}

Let $H$ be a graph with vertex set $V(H)=V\cup E\cup V'\cup W$ and edge set
\begin{align}
    \begin{split}
        E(H) &=\{u_ie_j : 1\leq i\leq n \wedge 1\leq j\leq m \wedge u_i\in e_j\}\\
             &\cup \{u_iu_{i'} : 1\leq i,i'\leq n \wedge i\neq i'\}\\
             &\cup \{u_{n+r}w_{n+r}^k:1\leq r\leq 3 \wedge 1\leq k\leq n+2\}.
    \end{split}
\end{align}

\noindent 
The graph~$H$ is illustrated in Figure \ref{fig:3-edge-3-reg-col-red-chordal}. Notice that $m(H)=n+3$ and $H$ is tight. Furthermore, the graph induced by $V$ and $E$, namely $H[V\cup E]$, is a \emph{split} graph, i.e., a graph whose vertex set can be partitioned into a clique and an independent set. Havet, Sales and Sampaio~\cite{HSS12} proved that $G$ admits a colouring of its edges using three colours if and only if $H$ admits a b-colouring with $n+3$ colours. 

\begin{figure}[t] 
    \centering
    \includegraphics[width=1\linewidth]{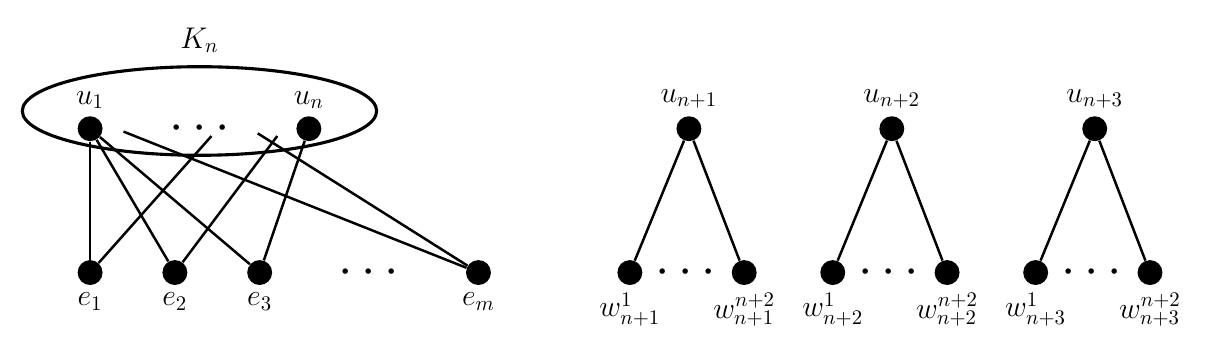}
    \caption{The graph~$H$ from~\cite{HSS12}.}
    \label{fig:3-edge-3-reg-col-red-chordal}
\end{figure}

\begin{lemma}\label{l-3p2}
{\sc Tight b-Chromatic Number} is \NP-complete for $3P_2$-free graphs.
\end{lemma}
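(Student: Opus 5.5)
The plan is to modify the reduction of Havet, Sales and Sampaio~\cite{HSS12} recalled above so that the resulting graph becomes $3P_2$-free. In the graph~$H$, the three stars with centres $u_{n+1},u_{n+2},u_{n+3}$ already induce a $3P_2$. Their only role is to contribute three dense vertices whose colours must be ``transferred'' to the edge vertices $e_j$. I would therefore replace the three disjoint stars by a single component $Q$, built as follows:
\begin{itemize}
\item make $u_{n+1},u_{n+2},u_{n+3}$ pairwise adjacent, so they form a triangle;
\item give each $u_{n+r}$ exactly $n$ pendant leaves $w_{n+r}^k$ with $1\leq k\leq n$, instead of $n+2$ leaves.
\end{itemize}
The split part $H[V\cup E]$ is left unchanged. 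Call the new graph $H'$; it is $H'[V\cup E]+Q$, and it is constructed in polynomial time from the cubic graph $G$.

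\emph{Step 1: tightness.} Each $u_i$ with $i\leq n$ has degree $(n-1)+3=n+2$. Each centre $u_{n+r}$ has degree $2+n=n+2$. Edge vertices have degree $2$ and leaves have degree $1$. Hence, for $n\geq 2$, we get $m(H')=n+3$, the dense vertices are exactly $T=\{u_1,\ldots,u_{n+3}\}$, and each has degree $m(H')-1$. So $H'$ is tight.

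\emph{Step 2: $3P_2$-freeness.} Any induced matching in $Q$ has at most one edge, since every edge of $Q$ contains a centre and the centres are pairwise adjacent. The split graph $H'[V\cup E]$ is $2P_2$-free, so its induced matchings also have at most one edge. As these two parts lie in different components, every induced matching of $H'$ has at most two edges, so $H'$ is $3P_2$-free.

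\emph{Step 3: equivalence with $3$-edge colouring.} Suppose $H'$ has a tight b-colouring $c$. By Proposition~\ref{p-dense}, the vertices of $T$ receive the distinct colours $1,\ldots,n+3$, with $c(u_i)=i$, and each vertex of $T$ sees every other colour exactly once. Each $u_i$ with $i\leq n$ already sees the colours $\{1,\ldots,n\}\setminus\{i\}$ on the clique $V$. Its three edge neighbours must therefore carry exactly the colours $n+1,n+2,n+3$, one each. Every edge vertex $e_j$ is adjacent to some $u_i$ with $i\leq n$, so $c(e_j)\in\{n+1,n+2,n+3\}$. Consequently, the restriction of $c$ to $E$ is a proper $3$-edge colouring of $G$.

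Conversely, let a $3$-edge colouring of $G$ with colours $n+1,n+2,n+3$ be given. Assign these colours to the vertices $e_j$, and set $c(u_i)=i$ for $1\leq i\leq n+3$. Colour the $n$ leaves of each $u_{n+r}$ bijectively with $1,\ldots,n$; this is possible because leaves have degree~$1$ and their only neighbour has a colour greater than $n$. Then each $u_i$ with $i\leq n$ sees colours $1,\ldots,n$ (other than its own) on $V$ and colours $n+1,n+2,n+3$ on its three edge neighbours. Each centre $u_{n+r}$ sees the other two centre colours and colours $1,\ldots,n$ on its leaves. So every vertex of $T$ is b-chromatic, and $c$ is a tight b-colouring. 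Membership in \NP\ is clear.

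The main obstacle is Step~3: I must make sure that joining the centres into a triangle does not allow some alternative colouring pattern that breaks the forcing argument. The degree count in Step~3 pins down every colour in the neighbourhood of $T$, which is why I expect it to go through.
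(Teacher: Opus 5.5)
Your proposal is correct and follows essentially the same approach as the paper: the same graph $H'$ (each star centre keeps only $n$ leaves and the three centres form a triangle), and the same $3P_2$-freeness argument (every edge meets one of the two cliques $V$ or $\{u_{n+1},u_{n+2},u_{n+3}\}$, so an induced matching has at most two edges). The only difference is that you verify the equivalence with $3$-edge colourability directly, via the degree-counting argument at each $u_i$, whereas the paper calls this straightforward given that the original reduction of Havet, Sales and Sampaio is correct; your explicit check is sound.
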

\begin{proof}
    We will modify again the graph~$H$ constructed in the reduction given by Havet, Sales and Sampaio~\cite{HSS12} from {\sc $3$-Edge Colouring} in cubic graphs, as discussed above.  
    As in the proof of Lemma \ref{l-claw}, reduce the number of neighbours of $u_{n+r}$ ($1\leq r\leq 3$) from $n+2$ to $n$, i.e., $N(u_{n+r})=\{w_{n+r}^k:1\leq k\leq n\}$. Then, add a clique on the vertices $u_{n+r}$ ($1\leq r\leq 3$) . Let $H'$ be the graph that is obtained from $H$ in this way; $H'$ is illustrated in Figure \ref{fig:3-edge-3-reg-col-red-chordal-mod-3P2-free}.
    \begin{figure}[t]
        \centering
        \includegraphics[width=1\linewidth]{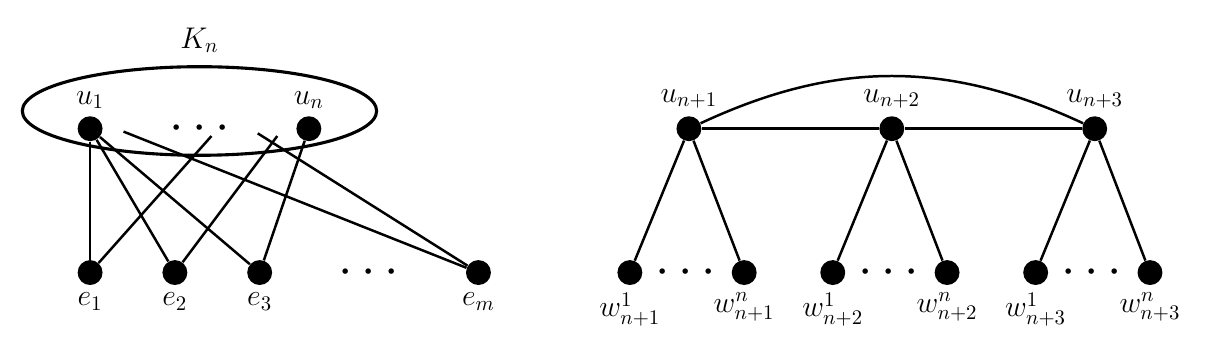}
        \caption{The $3P_2$-free graph~$H'$ used in the proof of Lemma~\ref{l-3p2}.}
        \label{fig:3-edge-3-reg-col-red-chordal-mod-3P2-free}
    \end{figure}

    It is straightforward to observe that $m(H')=n+3$, that $H'$ is tight, and that 
    the original cubic graph $G$ admits a $3$-edge colouring
    if and only if $\varphi(H')=m(H')$ (given that the corresponding property holds for $H$~\cite{HSS12}).
    
    Now, we show that $H'$ is $3P_2$-free.
    Suppose for a contradiction that $H'$ \emph{does} contain an induced $3P_2$, i.e., $f_1,f_2$ and $f_3$ are three edges in $H'$ such that no end-vertex from $f_j$ is adjacent to an end-vertex from $f_{j'}$ $(1\leq j,j'\leq 3)$.    
    
    Let $W'=\{w_{n+r}^k : 1\leq r\leq 3\wedge 1\leq k\leq n\}$. Since
    $E$ and $W'$
    are independent sets, no edge in $H'$ has its two end-vertices both in $E$ or both in $W'$. Therefore, every edge in $H'$ must have at least one of its end-vertices in either $V$ or $V'$. Assume without loss of generality that $f_1$ has an end-vertex in $V$.  If $f_2$ has an end-vertex in $V$ then $f_1$ and $f_2$ have adjacent end-vertex, a contradiction. Hence, $f_2$ has an end-vertex in $V'$.   
    If $f_3$ has an end-vertex in $V$,  
    $f_3$ and $f_1$ have adjacent end-vertex, whereas if $f_3$ has an end-vertex in $V'$, $f_3$ and $f_2$ have adjacent end-vertex.  As both of these are a contradiction, it follows that $H'$ is $3P_2$-free.  \end{proof}

\begin{lemma}\label{l-2p3}
{\sc Tight b-Chromatic Number} is \NP-complete for $2P_3$-free graphs.
\end{lemma}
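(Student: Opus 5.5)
The plan is to follow the same strategy as in Lemma~\ref{l-3p2}. Start from the graph $H$ of Havet, Sales and Sampaio~\cite{HSS12}, built from a cubic graph $G$ in the reduction from {\sc $3$-Edge Colouring}, and modify it locally. The goal is a graph $H''$ that is tight, satisfies $m(H'')=n+3$, is $2P_3$-free, and has a tight b-colouring if and only if $G$ has a $3$-edge colouring. Membership in \NP\ is immediate, so only hardness needs work.

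\textbf{Where the induced $2P_3$'s come from.} In $H$, and also in $H'$ from Lemma~\ref{l-3p2}, they arise in three ways:
\begin{itemize}
\item from two different stars, via two paths $w\,u_{n+r}\,w'$;
\item from one star and the split part $H[V\cup E]$, which contains induced $P_3$'s of the form $e\,u_i\,u_{i'}$ and $u_i\,e\,u_{i'}$;
\item from two $P_3$'s inside the split part.
\end{itemize}
The last kind can only occur in the form $e\,u_i\,e'$ and $f\,u_{i'}\,f'$, and the clique $V$ always makes $u_i$ adjacent to $u_{i'}$. So every induced $P_3$ of the split part meets $V$, and two such $P_3$'s are never anticomplete. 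The whole obstacle therefore lies in the gadget attached to $V'=\{u_{n+1},u_{n+2},u_{n+3}\}$.

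\textbf{The modification.}
\begin{enumerate}
\item As in Lemma~\ref{l-3p2}, make $V'$ a clique and reduce each $u_{n+r}$ to $n$ private leaves. This keeps $d(u_{n+r})=n+2=m-1$.
\item Add edges so that every induced $P_3$ through the gadget has a vertex adjacent to $V$, or adjacent to every other induced $P_3$. The most promising option is to make each leaf set $W_r=N(u_{n+r})\cap W$ a clique. Then $u_{n+r}\cup W_r$ is a clique $K_{n+1}$, every vertex of $W_r$ has degree $n\le m-2$, so none becomes dense, and $V'$ together with the $W_r$'s contains no induced $P_3$ except those using the triangle edges of $V'$.
\item Check the remaining induced $P_3$'s, such as $w\,u_{n+r}\,u_{n+r'}$. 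Each of them meets $V'$, which is a clique, so two of them are never anticomplete.
\item Check the mixed case: an induced $P_3$ in the gadget together with one in the split part. These are anticomplete, since the two parts are disconnected. This is the step that must be killed. I would first try adding all edges between $V'$ and $V$, while shrinking the leaf sets and the cliques $W_r$ so that degrees are preserved: $u_{n+r}$ should keep degree $n+2$, which leaves room for only one private clique neighbour. The $u_i\in V$ then gain degree $3$, so one also has to remove or absorb three neighbours of each $u_i$ to keep $d(u_i)=n+2$. This might be done by letting $V\cup V'$ play the role of the clique in a slightly different split gadget.
\end{enumerate}

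\textbf{Correctness of the reduction.} I expect this to be the hardest part, because the degree bookkeeping in step~4 changes which vertex in each colour class must provide the "third edge colour". I would argue via Proposition~\ref{p-dense}. The dense set is exactly $V\cup V'$, so each $u_{n+r}$ forms its own colour class together with some $E$-vertices. Then I would redo the argument of~\cite{HSS12}: each $u_i$ must see each of the three colours of $V'$ exactly once, via its three incident edge-vertices, which gives the $3$-edge colouring. For the converse, a $3$-edge colouring of $G$ tells us how to colour $E$ with the three colours of $V'$, and the leftover vertices are coloured greedily since they have degree at most $m-2$.

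If the degree adjustment in step~4 cannot be made to work, the fallback is to reduce from {\sc $3$-Edge Colouring} with $V'$ merged into the split clique from the start. In other words, I would design a single connected split-like graph, which is $2P_3$-free because every induced $P_3$ meets the clique. I would then verify tightness directly.
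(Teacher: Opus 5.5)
\textbf{There is a genuine gap.} You never reach a concrete $2P_3$-free graph. The step that matters is step~4: destroying the induced $2P_3$'s formed by a $P_3$ of the $V'$-gadget and a $P_3$ of $H[V\cup E]$, which exist as long as the two parts are disconnected. You only sketch this step, and in a direction that cannot work.

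\emph{Degree count.} If you make $V'$ complete to $V$ while keeping $m(H'')=n+3$, then $V\cup V'$ is a clique on $n+3$ vertices. So each $u_i$ already has degree $n+2=m(H'')-1$ inside it, and you must delete three edges at each $u_i$. Either you delete the edges to its $E$-neighbours, which erases the encoding of $G$. Or you delete edges of the clique $V$, after which $u_i$ must collect colours of $V$ elsewhere and the argument of~\cite{HSS12} no longer applies.

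\emph{Colouring argument.} Independently of degrees, your correctness sketch contradicts this modification. Once $u_i$ is adjacent to all of $V'$, Proposition~\ref{p-dense}(ii) makes $u_{n+r}$ the \emph{unique} neighbour of $u_i$ with colour $c(u_{n+r})$. Since every vertex of $E$ is adjacent to some $u_i$, no vertex of $E$ may receive a colour of $V'$. Hence nothing forces the three $E$-neighbours of $u_i$ to carry three distinct edge colours.

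\emph{Fallback.} The fallback fails as well. A split graph is $2P_2$-free, hence $(2P_2+P_1)$-free, so by Lemma~\ref{l-tight2p2p1} {\sc Tight b-Chromatic Number} is polynomial-time solvable on it; such a reduction cannot exist unless $\sP=\NP$. Merging $V'$ into the clique with $V$ also runs into the colouring problem above.

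\textbf{The missing idea} is to abandon $m(H'')=n+3$ and raise the number of colours. Then the three ``edge-colour'' dense vertices can be linked to $V$ and to $E$ through auxiliary cliques rather than directly.

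\emph{Construction.} The paper keeps only $H[V\cup E]$ and adds three cliques: $A$ on $|E|+1$ vertices, $B$ on three vertices and $C$ on $n$ vertices. It makes $A$ complete to $V$ and to $B$, $B$ complete to $C$, and $C$ complete to $E$.

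\emph{Tightness.} The dense vertices are exactly $V\cup A\cup B$, all of degree $|E|+n+3$. Vertices of $C$ have degree $|E|+n+2$ and vertices of $E$ have degree $n+2$. So the graph is tight with $m=|E|+n+4$.

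\emph{Correctness.} Each $u_i$ sees all colours of $V\cup A$ directly and can obtain the three colours of $B$ only from its three $E$-neighbours, which yields the $3$-edge colouring. Conversely, colour $E$ by the edge colouring using the colours of $B$, and colour $C$ with the colours of $V$.

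\emph{$2P_3$-freeness.} Since $B\cup C\cup E$ induces a split graph, one of the two $P_3$'s meets the clique $A\cup V$. The other $P_3$ then lies in $B\cup C\cup E$ and is anticomplete to a vertex of the first $P_3$ lying in $B\cup C\cup E$. This is impossible because every vertex of $C$ dominates $B\cup C\cup E$, $B$ is a clique, and $E$ is independent and anticomplete to $B$.
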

\begin{proof}
    We will modify again the graph~$H$ constructed in the reduction given by Havet, Sales and Sampaio~\cite{HSS12} from {\sc $3$-Edge Colouring} in cubic graphs, as discussed above.

    Firstly, we let $H'=H[V\cup E]$ initially, i.e., we remove the three stars on $n+3$ vertices from $H$. Now, add three cliques $A,B$ and $C$, where $A=\{a_1,\dots,a_{m+1}\}$, $B=\{b_1,b_2,b_3\}$ and $C=\{c_1,\dots,c_n\}$. Next, add edge $u_ia_r$ for every $u_i\in V$ and $a_r\in A$, edge $a_rb_s$ for every $a_r\in A$ and $b_s\in B$, edge $b_sc_t$ for every $b_s\in B$ and $c_t\in C$, and edge $c_te_j$ for every $c_t\in C$ and $e_j\in E$, to $H'$. The constructed graph~$H'$ is illustrated in Figure \ref{fig:2P3-free-reduction-gadget}. 
    
    \begin{figure}[t]
        \centering
        \includegraphics[width=0.75\linewidth]{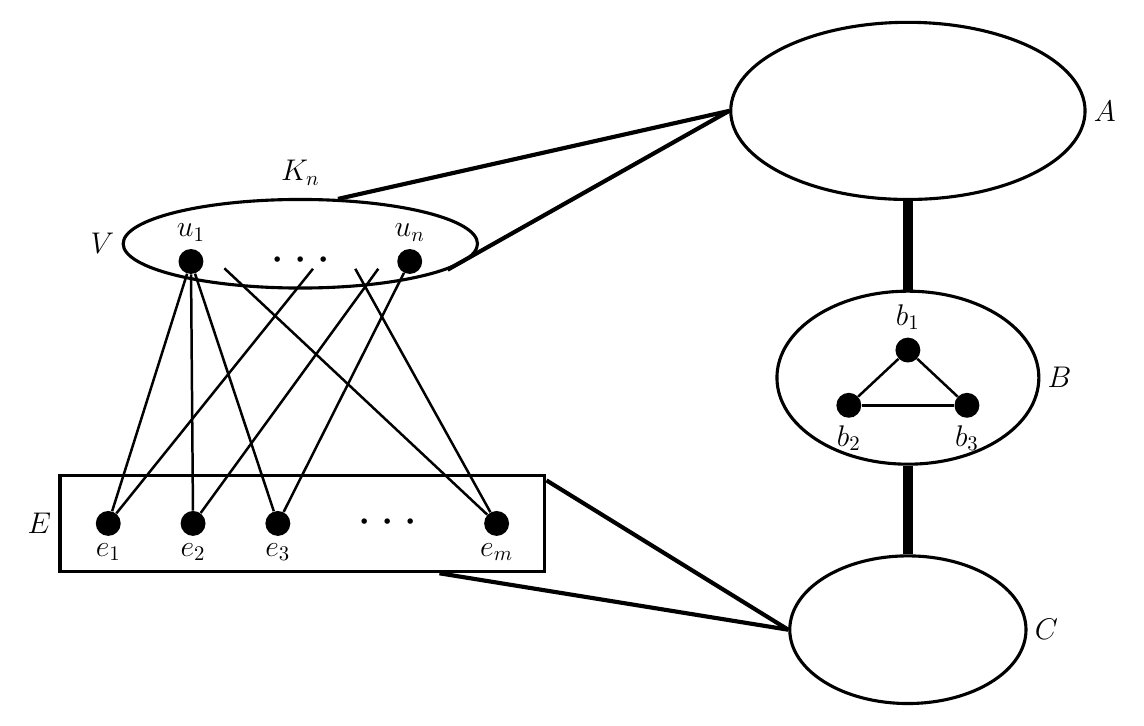}
        \caption{The $2P_3$-free graph~$H'$ used in the proof of Lemma~\ref{l-2p3}.}
        \label{fig:2P3-free-reduction-gadget}
    \end{figure}
    
    Now observe that:
    \begin{itemize}
        \item for $u_i\in V$, $d(u_i)= |V|-1 + 3 + |A| = m+n+3$,
        \item for $a_r\in A$, $d(a_r)=|A|-1 + |V| + |B| = m+n+3$,
        \item for $b_s\in B$, $d(b_s)=|B|-1+|A|+|C|=m+n+3$,
        \item for $c_t\in C$, $d(c_t)=|C|-1 +|B|+|E| = m+n+2$,
        \item for $e_j\in E$, $d(e_j)=2+n<m+n+3$.
    \end{itemize}
    It follows that $m(H')=m+n+4$ since there are $|V\cup A\cup B|=m+n+4$ vertices with degree $m+n+3$; moreover $H'$ is tight. Now, we claim that the original cubic graph $G$ has a $3$-edge colouring if and only if $\varphi(H')=m(H')$.

    \medskip
    \noindent
    ($\Rightarrow$) Suppose that $G$ has an edge colouring $c_G$ using colours $1$, $2$ and $3$. Define a function $c_{H'}$ in $H'$ as follows.  Let $c_{H'}(e_j)=c_G(e_j)$ for all $e_j\in E$, $c_{H'}(u_i)=3+i$ for all $u_i\in V$, $c_{H'}(a_r)=n+3+r$ for all $a_r\in A$, $c_{H'}(b_s)=s$ for all $b_s\in B$, and $c_{H'}(c_t)=3+t$ for all $c_t\in C$.  It is straightforward to verify that $c_{H'}$ is a colouring of $H'$.
     
    It follows that vertices in $B$ are b-chromatic for colours $1$, $2$ and $3$. Next, observe that each vertex $u_i\in V$ is picking up colours $1$, $2$ and $3$ from vertices in $E$ by the properties of $c_G$. Furthermore, $u_i$ is picking up colours $\{4,\dots, n+3\}\setminus \{c_{H'}(u_i)\}$ from vertices in $V\setminus \{u_i\}$ and picking up colours $\{n+4,\dots,m+n+4\}$ from vertices in $A$. It follows that vertices in $V$ are b-chromatic for colours $4,\dots, n+3$. Lastly, observe that each vertex $a_r\in A$ is picking up colours $\{1,2,3\}$ from vertices in $B$, colours $\{4,\dots, n+3\}$ from vertices in $V$ and colours $\{n+4, \dots,m+n+4\}\setminus \{c_{H'}(a_r)\}$ from vertices in $A\setminus \{a_r\}$. It follows that vertices in $A$ are b-chromatic for colours $n+4,\dots,m+n+4$. Hence, $\varphi(H')=m(H')$.
    
    \medskip
    \noindent
    ($\Leftarrow$) Suppose that $\varphi(H')=m(H')$ and let $c_{H'}$ be a b-colouring of $H'$ with $m+n+4$ colours.  The only dense vertices are those in $A\cup B\cup V$, each of which has degree exactly $m+n+3$. We may assume without loss of generality that $c_{H'}(b_s)=s$ for each $b_s\in B$, $c_{H'}(u_i)=3+i$ for each $u_i\in V$ and $c_{H'}(a_r)=n+3+r$ for each $a_r\in A$. 
    As $c_{H'}$ is a b-colouring, every vertex 
    $u_i\in V$ needs to pick up colours $1$, $2$ and $3$, which it can only obtain from its three neighbours in $E$.  Define a mapping $c_G$ in $G$ by letting $c_G(e_j)=c_{H'}(e_j)$ for each $e_j\in E$.  Clearly $c_G$ is then a $3$-edge colouring of $G$.

\medskip
\noindent
    It remains to show that $H'$ is $2P_3$-free. For a contradiction, let
    $F=F_1+F_2$,
    where both $F_1$ and $F_2$ are isomorphic to $P_3$,
    be an induced subgraph of $H'$. 
    As $H'[B\cup C\cup E]$ is a split graph, which is $2P_2$-free, $F$ has at least one vertex $u$ of $A\cup V$,
    say $u\in V(F_1)$.
    As $A\cup V$ is a clique, at least one vertex~$v\in V(F_1)\setminus \{u\}$ belongs to $B\cup C\cup E$, and moreover, $V(F_2)\subseteq B\cup C\cup E$. As every vertex of $C$ dominates $G[B\cup C\cup E]$, the latter implies that $v$ belongs to $B\cup E$.
    First suppose that $v\in B$.
    As $B$ is complete to $C$, this means that $V(F_2)\subseteq E$.
    However, this is not possible, as $E$ is an independent set.
    Hence, $v\in E$. Since $E$ is complete to~$C$, this means that $V(F_2)\cap C=\emptyset$.
    Hence, as $F_2$ is connected, $V(F)\subseteq B$ or $V(F)\subseteq E$. Both cases are not possible, as $B$ is a clique and $E$ is an independent set. We conclude that $H'$ is $2P_3$-free.
\end{proof}

\noindent
We are now ready to show Theorem~\ref{t-tight}, which we restate below.

\ttight*

\begin{proof}
Let $H$ be a graph.
If $H\ssi P_4$,  $H\ssi 2P_2+P_1$,  
or $H\ssi P_3+P_1$,  we apply Lemma~\ref{l-p4},~\ref{l-tight2p2p1}
or~\ref{l-tightp13}, respectively.
We now consider the hard cases.
First suppose $H$ is not a linear forest, so $H$ has an induced cycle or an induced claw.
If $H$ has an induced~$C_r$, then we apply Lemma~\ref{l-bip} if $r=3$ and Lemma~\ref{l-split} if $r\geq 4$.
If $H$ has an induced claw, then we apply Lemma~\ref{l-claw}.
Now suppose $H$ is a linear forest.
If $P_5\ssi H$, $3P_2\ssi H$ or $2P_3\ssi H$, then we apply Lemma~\ref{l-split},~\ref{l-3p2} or~\ref{l-2p3}, respectively.  
\end{proof}

\section{The Proof of Theorem~\ref{t-fall}}\label{s-fall}

Again we need some known results as lemmas, starting with a result of Silva~\cite{Si19} who proved that every chordal graph with a fall colouring is fall-unique and that deciding if the fall spectrum is non-empty is \NP-complete for chordal graphs.

\begin{lemma}[\cite{Si19}]\label{l-chordal}
{\sc Fall Chromatic Number} and {\sc Fall Achromatic Number} are \NP-hard for chordal graphs, that is, $(C_4,C_5,\ldots)$-free graphs.
\end{lemma}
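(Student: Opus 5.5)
Since Silva~\cite{Si19} shows that deciding whether $\mathcal{F}(G)\neq\emptyset$ is \NP-complete for chordal graphs, I will reduce this decision problem to both {\sc Fall Chromatic Number} and {\sc Fall Achromatic Number} on the same class. By definition, both problems output $0$ exactly when $G$ has no fall colouring, and a positive integer otherwise. So any polynomial-time algorithm for either problem on chordal graphs would decide non-emptiness of the fall spectrum on chordal graphs, by comparing its output with $0$. This gives \NP-hardness of both problems for chordal graphs, provided the instances of Silva's reduction are chordal, which is exactly what his theorem asserts.

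The only point to verify is that Silva's hardness statement is genuinely about the decision ``is $\mathcal{F}(G)$ non-empty'' restricted to chordal graphs. If it were instead phrased as deciding whether $G$ has a fall $k$-colouring for a given $k$, I would use Silva's fall-uniqueness result for chordal graphs. A chordal graph with a fall colouring satisfies $\mathcal{F}(G)=\{\chi(G)\}$, and $\chi(G)$ is computable in polynomial time for chordal graphs via a perfect elimination ordering. Hence, on chordal graphs, having a fall colouring is equivalent to having a fall $\chi(G)$-colouring, and either formulation reduces to the other. In both cases, computing $\chi_f(G)$ or $\varphi_f(G)$, which coincide on chordal graphs, decides the \NP-complete question.

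Finally, I will note that chordal graphs are precisely the $(C_4,C_5,\ldots)$-free graphs (see Section~\ref{s-pre}), which gives the stated form. I do not expect any real obstacle here; the only care needed is the ``$0$ if no fall colouring'' convention built into the problem definitions.
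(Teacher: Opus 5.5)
Your proposal is correct and matches the paper. The paper gives no separate proof and simply invokes Silva's result that deciding $\mathcal{F}(G)\neq\emptyset$ is \NP-complete for chordal graphs, together with the convention that both parameters equal $0$ exactly when no fall colouring exists. Your fallback argument via fall-uniqueness is also sound but not needed: indeed $\mathcal{F}(G)\subseteq\{\chi(G)\}$ for chordal $G$, since a simplicial vertex has degree at most $\omega(G)-1=\chi(G)-1$.
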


\noindent
We also need another result of Silva~\cite{Si19}.

\begin{lemma}[\cite{Si19}]\label{l-fallp4}
{\sc Fall Colouring}, and thus {\sc Fall Chromatic Number} and {\sc Fall Achromatic Number}, are polynomial-time solvable for $P_4$-sparse graphs, and thus for $P_4$-free graphs.
\end{lemma}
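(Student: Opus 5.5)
The statement to address is Lemma~\ref{l-fallp4}: {\sc Fall Colouring}, and hence {\sc Fall Chromatic Number} and {\sc Fall Achromatic Number}, are polynomial-time solvable for $P_4$-sparse graphs, and thus for $P_4$-free graphs.

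The plan is to handle {\sc Fall Colouring} directly, since the other two problems then follow easily. Given an algorithm deciding whether $G$ has a fall $k$-colouring, we run it for every $k\in\{1,\ldots,\delta(G)+1\}$. By Observation~\ref{o-known} this recovers the whole fall spectrum $\mathcal{F}(G)$, and we output its minimum or maximum, or $0$ if $\mathcal{F}(G)=\emptyset$. This adds only $O(n)$ calls. The inheritance to $P_4$-free graphs is immediate, because $P_4$-free graphs are $P_4$-sparse.

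For the core algorithm, the quickest route invokes the result of Jaffke, Lima and Lokshtanov~\cite{JLL24}. It states that {\sc Fall Colouring} is polynomial-time solvable on every class of bounded clique-width. $P_4$-sparse graphs (indeed $P_4$-tidy graphs) have bounded clique-width~\cite{CMR00}, and a bounded-width expression can be computed in polynomial time from the modular/spider decomposition. This gives the lemma.

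To follow Silva's original structural approach instead, I would use the decomposition theorem for $P_4$-sparse graphs. Every such graph is either disconnected, or its complement is disconnected, or it is a spider $(K,S,R)$ whose head $R$ is again $P_4$-sparse. I would then compute $\mathcal{F}(G)$ recursively along this decomposition.

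\begin{itemize}
\item For a disjoint union $G_1+G_2$, the colour classes are maximal independent sets. Each class meets every component in a maximal independent set of that component, so $\mathcal{F}(G_1+G_2)=\mathcal{F}(G_1)\cap\mathcal{F}(G_2)$.
\item For a join $G_1\vee G_2$, no colour class can meet both sides. A class inside $V(G_1)$ is dominating in $G$ only if it is maximal independent in $G_1$. Hence $\mathcal{F}$ of the join is the sumset $\mathcal{F}(G_1)+\mathcal{F}(G_2)$.
\item For a spider, the vertices of $K$ are forced. I would analyse directly which colour classes can contain legs and which can lie inside $R$. I would then express $\mathcal{F}(G)$ in terms of $\mathcal{F}(G[R])$ shifted by $|K|$, separating the thin and thick cases.
\end{itemize}

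All spectra are subsets of $\{1,\ldots,n\}$, so each operation is polynomial.

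The main obstacle is the spider case. One must check carefully whether a leg can share a class with a head vertex, and whether the classes containing head vertices restrict to maximal independent sets of $G[R]$. I would settle this by case analysis on thin versus thick spiders, using domination of each leg by its unique (or non-) neighbour in $K$. Failing that, the clique-width route above suffices on its own.
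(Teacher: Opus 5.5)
Your argument is correct, but it is not the paper's route. The paper gives no proof of this lemma at all and simply cites Silva~\cite{Si19}, whose result is specific to $P_4$-sparse graphs. You instead combine the theorem of Jaffke, Lima and Lokshtanov~\cite{JLL24} for bounded clique-width with the bounded clique-width of $P_4$-sparse (indeed $P_4$-tidy) graphs~\cite{CMR00}. The paper itself remarks in Section~\ref{s-known} that~\cite{JLL24} generalises Silva's result, so this is a legitimate substitute. Your reduction of {\sc Fall Chromatic Number} and {\sc Fall Achromatic Number} to at most $\delta(G)+1$ calls of {\sc Fall Colouring}, via Observation~\ref{o-known}, is also fine. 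Your route buys brevity and generality. A dedicated argument buys a self-contained, low-degree polynomial algorithm, rather than one that is only XP in the clique-width, together with structural information about the spectrum.

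If you pursue the structural route, your disjoint-union rule $\mathcal{F}(G_1+G_2)=\mathcal{F}(G_1)\cap\mathcal{F}(G_2)$ and join rule $\mathcal{F}(G_1\vee G_2)=\mathcal{F}(G_1)+\mathcal{F}(G_2)$ are right. Your guess for the spider case, however, is wrong: $\mathcal{F}(G)$ is not a shift of $\mathcal{F}(G[R])$ by $|K|$. In a spider $(S,K,R)$, every leg has degree at most $|K|-1$ (degree $1$ if thin, $|K|-1$ if thick), so $\delta(G)+1\leq |K|$. On the other hand, for any $r\in R$ the set $K\cup\{r\}$ is a clique of size $|K|+1$. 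Hence $\chi(G)>\delta(G)+1$, and Observation~\ref{o-known} gives $\mathcal{F}(G)=\emptyset$ whenever $R\neq\emptyset$.

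If $R=\emptyset$, a thick spider has $\mathcal{F}(G)=\{|K|\}$: each leg is forced to take the colour of its unique non-neighbour in $K$, and the resulting classes are independent and dominating. A thin spider has $\mathcal{F}(G)=\{2\}$ if $|K|=2$ (it is then $P_4$), and $\mathcal{F}(G)=\emptyset$ otherwise. With this rule you need no recursion into $R$ and no case analysis on which classes legs and head vertices share. Moreover, since $\mathcal{F}(K_1)=\{1\}$ and intersections and sumsets of sets of size at most one again have size at most one, the recursion also shows that every $P_4$-sparse graph with a fall colouring is fall-unique.
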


\noindent
The following two hardness results were originally proven for {\sc Fall $3$-Colouring}, but a simple trick ensures that the problem stays \NP-complete for our purposes.

\begin{lemma}[\cite{LL09}]\label{l-fallbip}
{\sc Fall Chromatic Number} and {\sc Fall Achromatic Number} are \NP-hard for $C_3$-free graphs.
\end{lemma}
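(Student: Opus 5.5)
We reduce from {\sc Fall $3$-Colouring} on bipartite graphs, which is \NP-complete by Laskar and Lyle~\cite{LL09}. The obstacle is that a bipartite graph $G$ with a fall $3$-colouring might also have a fall $2$-colouring (so $\chi_f(G)=2$), or fall colourings with more colours, so neither $\chi_f$ nor $\varphi_f$ directly reveals membership of $3$ in $\mathcal{F}(G)$. The ``simple trick'' is to pin the fall spectrum down to a subset of $\{3\}$ while keeping the graph $C_3$-free.

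First, inspecting the instances of~\cite{LL09}, we may assume $\delta(G)\le 2$. If not, we attach a gadget, e.g.\ a pendant path or a $C_6$ joined appropriately, forcing a vertex of degree~$2$ without destroying fall $3$-colourability and without creating triangles. By Observation~\ref{o-known}, every fall colouring then uses at most $3$ colours, so $\varphi_f(G)\le 3$.

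Second, we kill fall $2$-colourings. A connected graph has a fall $2$-colouring exactly when it is bipartite with no isolated vertices, because both bipartition classes are then independent dominating sets. So we add a disjoint component that is $C_3$-free, non-bipartite, and has a fall $3$-colouring: the cycle $C_9$ works, coloured $1,2,3$ periodically, and it has minimum degree~$2$. Let $G'=G+C_9$. Fall colourings of a disjoint union are unions of fall colourings of the components that use the same colour set, since each vertex must see all other colours within its own component. Hence $\mathcal{F}(G')=\mathcal{F}(G)\cap\mathcal{F}(C_9)$. Since $C_9$ is not bipartite, $2\notin\mathcal{F}(C_9)$, and since $\delta=2$, $\mathcal{F}(C_9)\subseteq\{3\}$; also $1\notin\mathcal{F}$ for any graph with edges. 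Therefore $\mathcal{F}(G')=\{3\}$ if $G$ has a fall $3$-colouring and $\mathcal{F}(G')=\emptyset$ otherwise.

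Consequently $G$ is a yes-instance if and only if $\chi_f(G')=3$, if and only if $\varphi_f(G')=3$, and otherwise both values are $0$. The graph $G'$ is $C_3$-free, which gives \NP-hardness of both problems. The step needing the most care is the first one: making sure the minimum degree is at most~$2$, so that values $k\ge4$ are excluded for $G$ itself. If the Laskar--Lyle instances do not already have a degree-$2$ vertex, I would instead take the intersection with the spectrum of another component, namely $C_9$ itself, which already caps the common spectrum at $3$. The union argument does this automatically, so the first step can in fact be dropped.
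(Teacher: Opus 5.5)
Your proof is correct and follows the same approach as the paper: you take a disjoint union of the bipartite Fall $3$-Colouring instance of Laskar and Lyle with a $C_3$-free gadget whose fall spectrum is exactly $\{3\}$, so that $\mathcal{F}(G+C_9)=\mathcal{F}(G)\cap\{3\}$. The only difference is that you use $C_9$, whose spectrum $\{3\}$ you verify explicitly, where the paper uses a $10$-vertex gadget; as you note yourself, your first step about minimum degree is unnecessary.
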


\begin{figure}[t]
    \centering
    \includegraphics[width=0.5\linewidth]{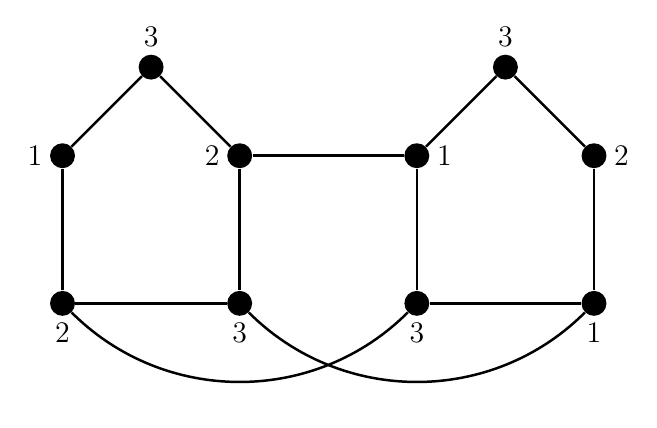}
    \caption{The $C_3$-free gadget $G'$  with $\mathcal{F}(G')=\{3\}$ from the proof of Lemma~\ref{l-fallbip}.}
    \label{fig:C3-free-gadget-fall-spe-3}
\end{figure}

\begin{proof}
Laskar and Lyle~\cite{LL09} proved that {\sc Fall $3$-Colouring} is \NP-complete even for bipartite graphs. Let $G$ be a bipartite graph, and let $G'$ be the $10$-vertex gadget depicted in Figure \ref{fig:C3-free-gadget-fall-spe-3}. Note that $G'$ is $C_3$-free and $\mathcal{F}(G')=\{3\}$.
It remains to observe that $G+G'$ is $C_3$-free, and also that $G$ has a fall $3$-colouring if and only if $G+G'$ is fall-unique.  
\end{proof}

\begin{lemma}[\cite{LM20}]\label{l-line}
{\sc Fall Chromatic Number} and {\sc Fall Achromatic Number} are \NP-hard for line graphs, and thus for claw-free graphs.
\end{lemma}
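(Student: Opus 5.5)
We reduce from {\sc Fall $3$-Colouring}, which Lauri and Mitillos~\cite{LM20} showed to be \NP-complete for $(2k-2)$-regular line graphs with $k=3$, that is, for $4$-regular line graphs. We then apply the same padding trick as in the proof of Lemma~\ref{l-fallbip}.

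Let $G$ be a $4$-regular line graph. We add a disjoint gadget $G'$ and consider $G+G'$. The gadget $G'$ must satisfy three conditions: it is a line graph, it is claw-free, and $\mathcal{F}(G')=\{3\}$.

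The natural choice is $G'=C_3=K_3=L(K_{1,3})$. This is a line graph, and its only fall colouring uses $3$ colours, because $\chi(C_3)=3=\delta(C_3)+1$; by Observation~\ref{o-known} this gives $\mathcal{F}(C_3)=\{3\}$. The disjoint union of two line graphs is again a line graph, namely of the disjoint union of the root graphs, so $G+C_3$ is a line graph.

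We next relate the fall spectra. A fall colouring of a disjoint union restricts to a fall colouring of each component with the same colour set. The reason is that every vertex is b-chromatic, so each component must see every colour. Conversely, fall colourings of the parts with the same number of colours combine into one. Hence $\mathcal{F}(G+C_3)=\mathcal{F}(G)\cap\{3\}$.

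It follows that $G$ has a fall $3$-colouring if and only if $\mathcal{F}(G+C_3)=\{3\}$, and otherwise the spectrum is empty. In the first case $\chi_f=\varphi_f=3$; in the second both equal $0$. So computing either parameter on $G+C_3$ decides {\sc Fall $3$-Colouring} on $G$. This proves \NP-hardness of both problems for line graphs, and hence for claw-free graphs, since every line graph is claw-free.

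The main obstacle is the disjoint-union identity for fall spectra. We must check that "fall $k$-colouring" means exactly $k$ colours in total, each used in every component. This holds because each vertex must see all other $k-1$ colours within its own component, so every colour must appear in every component that has a vertex; a colour missing from one component cannot occur. The other point to check is that the cited result really holds for line graphs, which the footnote in the paper confirms.
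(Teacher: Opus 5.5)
Your proposal is correct and follows the paper's proof. Like the paper, you reduce from {\sc Fall $3$-Colouring} on $4$-regular line graphs~\cite{LM20}, add the line graph $C_3=L(K_{1,3})$ with $\mathcal{F}(C_3)=\{3\}$ as a disjoint gadget, and observe that $G$ has a fall $3$-colouring if and only if $G+C_3$ is fall-unique; your explicit proof of $\mathcal{F}(G+C_3)=\mathcal{F}(G)\cap\{3\}$ spells out a step the paper leaves implicit.
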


\begin{proof}
We recall that Lauri and Mitillos~\cite{LM20} proved that {\sc Fall $3$-Colouring} is \NP-complete even for $4$-regular line graphs 
(see the proof of Theorem~15 in~\cite{LM20}). Let $G$ be a line graph. Note that $C_3$ is the line graph of the claw and ${\cal F}(C_3)=\{3\}$. Hence, $G+C_3$ is a line graph, and $G$ has a fall $3$-colouring if and only if $G+G'$ is fall-unique.  
\end{proof}

\noindent
We now prove a new polynomial-time solvability result using Corollary~\ref{c-olariu}. 

\begin{lemma}\label{l-p13}
{\sc Fall Colouring}, and thus {\sc Fall Chromatic Number} and {\sc Fall Achrom\-atic Number}, is polynomial-time solvable for $(P_3+P_1)$-free graphs.
\end{lemma}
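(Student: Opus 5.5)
We plan to reduce to co-components via Corollary~\ref{c-olariu}, mirroring the proof of Lemma~\ref{l-tightp13}. Let $G$ be $(P_3+P_1)$-free with co-components $G_1,\ldots,G_r$. Since $V(G_i)$ is complete to $V(G_j)$ for $i\neq j$, the colour sets used on distinct co-components of any colouring are pairwise disjoint. We first show that $G$ has a fall $k$-colouring if and only if $k=k_1+\cdots+k_r$ for integers $k_i$ such that each $G_i$ has a fall $k_i$-colouring. For the forward direction, let $c$ be a fall colouring and let $S_i$ be the colour set used on $G_i$. A vertex $v\in V(G_i)$ sees no colour of $S_i$ from outside $G_i$. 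Hence $v$ must see every colour of $S_i\setminus\{c(v)\}$ inside $G_i$, so the restriction of $c$ to $G_i$ is a fall $|S_i|$-colouring. For the backward direction, we shift the colour sets of fall colourings of the $G_i$ to be pairwise disjoint. Every vertex then sees all colours of the other co-components through the complete joins, and all colours of its own co-component by assumption, so we obtain a fall colouring of $G$. Therefore ${\cal F}(G)={\cal F}(G_1)+\cdots+{\cal F}(G_r)$ (Minkowski sum). It then suffices to compute each ${\cal F}(G_i)$ in polynomial time, since ${\cal F}(G_i)\subseteq\{1,\ldots,n\}$ and the sumset, or equivalently membership of $k$, can be found by straightforward dynamic programming.

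By Corollary~\ref{c-olariu}, each $G_i$ is $3P_1$-free or a disjoint union of complete graphs. In the second case $G_i$ is $P_4$-free, and we can decide for every $k\le n$ whether $G_i$ has a fall $k$-colouring by Lemma~\ref{l-fallp4}. Alternatively, we can do this directly: if $G_i$ is a disjoint union of cliques, then ${\cal F}(G_i)$ is nonempty exactly when all cliques have the same size $s$, and then ${\cal F}(G_i)=\{s\}$.

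The remaining case, and the main obstacle, is a $3P_1$-free $G_i$. Here every colour class has size at most $2$. A fall colouring is then a partition of $V(G_i)$ into classes that are independent dominating sets. A singleton class $\{v\}$ must be a dominating vertex of $G_i$. A pair class $\{u,w\}$ must consist of non-adjacent vertices that together dominate $G_i$; in a $3P_1$-free graph every non-edge already dominates, since any vertex adjacent to neither would form a $3P_1$ with them. So a fall colouring corresponds exactly to a matching $M$ in $\overline{G_i}$ whose unmatched vertices are all dominating in $G_i$, that is, isolated in $\overline{G_i}$. The number of colours is then $|V(G_i)|-|M|$.

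Since $G_i$ is a co-component, $\overline{G_i}$ is connected. Hence either $G_i$ is a single vertex, with ${\cal F}(G_i)=\{1\}$, or $\overline{G_i}$ has no isolated vertices. In the latter case ${\cal F}(G_i)$ is nonempty if and only if $\overline{G_i}$ has a perfect matching, and then ${\cal F}(G_i)=\{|V(G_i)|/2\}$. This can be checked in polynomial time~\cite{Ed65}. Combining all co-components yields a polynomial-time algorithm for {\sc Fall Colouring}, and hence for the minimum and maximum of ${\cal F}(G)$, which gives {\sc Fall Chromatic Number} and {\sc Fall Achromatic Number}.
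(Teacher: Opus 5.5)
Your proposal is correct and follows essentially the same route as the paper. You split $G$ into co-components with disjoint colour sets, invoke Corollary~\ref{c-olariu}, settle disjoint unions of cliques by the equal-size criterion, and reduce the $3P_1$-free case to a perfect matching in $\overline{G_i}$. Two things differ only slightly. You compute ${\cal F}(G)$ as a sumset by dynamic programming, whereas the paper notes that each ${\cal F}(G_i)$ has at most one element, so $G$ is fall-unique and no dynamic programming is needed. You also dispose of dominating vertices via connectivity of $\overline{G_i}$, whereas the paper deletes the set of dominating vertices and works with the remaining graph.
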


\begin{proof}
Let $G$ be a $(P_3+P_1)$-free graph with co-components $G_1, \ldots, G_r$ for some $r\geq 1$.
By Corollary~\ref{c-olariu}, every $G_i$ is $3P_1$-free or a disjoint union of complete graphs. We will show below that $G$ is fall-unique if $G$ has a fall colouring, and that in that case we can compute the singleton in ${\cal F}(G)$ in polynomial time. This immediately proves the lemma.

By definition, $V(G_i)$ is complete to $V(G_j)$ if $i\neq j$. Hence, every fall colouring~$c$ of $G$ (if one exists) colours the vertices of each $G_i$ with different colours, that is, 
$c(V(G_i))\cap c(V(G_j))=\emptyset$ if $i\neq j$. Hence, $G$ has a fall colouring if and only if each $G_i$ has a fall colouring, and we can consider the graphs $G_1,\ldots, G_r$ separately. 

Let $G_i$ be the co-component of $G$ under consideration for some $i$ ($1\leq i\leq r$).
We consider two cases.

\medskip
\noindent
{\bf Case 1.} $G_i$ is $3P_1$-free.\\
We observe that the unique vertex in every singleton colour class of every fall colouring of $G_i$ must be a dominating vertex of $G_i$. Hence, we first determine, in polynomial time, the set $S$ of all dominating vertices of $G_i$. As $G_i'=G_i-S$ is also $3P_1$-free, and $G_i$ has a fall colouring with $k$ colours if and only if $G_i'$ has a fall colouring with
$k-|S|$ colours, we may now safely consider $G'_i$ instead of $G_i$. 

As $G_i'$ has no dominating vertices and is $3P_1$-free, every colour class of every fall colouring of $G_i'$ (if one exists) has size exactly~$2$. Hence, $G_i'$ is fall-unique. Moreover, the colour classes of every fall colouring of $G_i$ form the edges of a perfect matching in $\overline{G_i'}$. 

We claim that it suffices to check if $\overline{G_i'}$ has a perfect matching, which can be done in polynomial time~\cite{Ed65}. To see this, suppose $M$ is a perfect matching of $\overline{G_i'}$, say $M=\{u_1v_1,\ldots, u_pv_p\}$ where $p=\frac{1}{2}|V(G_i')|$. Consider the colouring $c$ of $G_i'$ with colour classes $\{u_h,v_h\}$ for $1\leq h\leq p$. If there exists a vertex $w\in \{u_h,v_h\}$ not adjacent to $\{u_j,v_j\}$ for some $j\neq h$, then $\{u_j,v_j,w\}$ induces a $3P_1$ in $G_i'$, a contradiction. Hence, $c$ is a fall colouring of $G_i$, and $\mathcal{F}(G_i)=\{|S|+p\}$.

\medskip
\noindent
{\bf Case 2.} $G_i$ is a disjoint union of complete graphs.\\
As each connected component $K$ of $G_i$ is complete, the vertices of $K$
must all receive a different colour. Hence, $G_i$ is fall-unique if it has a fall colouring. The latter holds if and only if every connected component of $G_i$ has the same size $p$. Moreover $\mathcal{F}(G_i)=\{p\}$. 
\end{proof}

\noindent
The final lemma we need is a new hardness result based on a construction from~\cite{PPR19} for testing if a graph has a critical vertex. In turn, the construction in~\cite{PPR19} was based on a construction of Kr\'al et al.~\cite{KKTW01} for proving that {\sc Chromatic Number} is \NP-hard for  $(C_5,2P_2,P_2+2P_1,4P_1)$-free graphs.

\begin{figure}[t]
\begin{center}
\includegraphics[keepaspectratio=true, width=8cm]{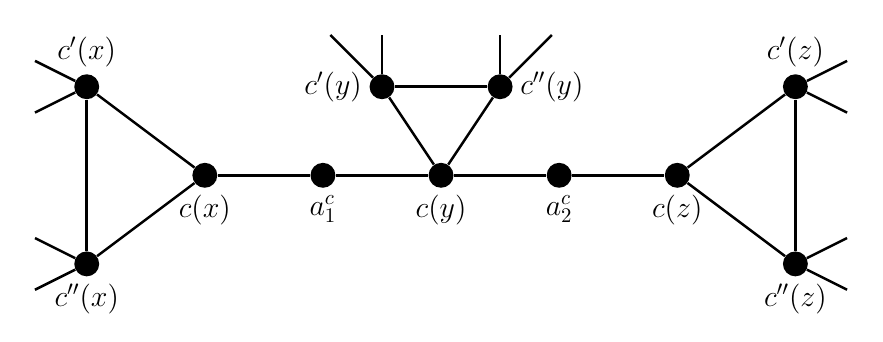}
\caption{The clause gadget $G_c$ and three variable gadgets $Q_x$, $Q_y$ and $Q_z$ (figure taken from~\cite{PPR19}). Note that one of the two edges incident to $c'(x)$ that ``miss'' their other end may not exist; the same holds for the other pairs of such edges.}\label{fig:clause}
\end{center}
\end{figure}

\begin{lemma}\label{l-hardnew}
{\sc Fall Chromatic Number} and {\sc Fall Achromatic Number} are \NP-hard for $(C_5,2P_2,P_2+2P_1,4P_1)$-free graphs.
\end{lemma}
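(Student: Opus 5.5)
We plan to adapt the reduction of Kr\'al et al.~\cite{KKTW01}, in the modified form of~\cite{PPR19} shown in Figure~\ref{fig:clause}, which builds from an instance $\phi$ of a suitable {\sc Satisfiability} variant a graph $G_\phi$ that is $(C_5,2P_2,P_2+2P_1,4P_1)$-free. The key feature of these constructions is that $G_\phi$ is $4P_1$-free, so every colour class has size at most~$3$, and the number of vertices is chosen so that $\phi$ is satisfiable if and only if $V(G_\phi)$ can be partitioned into a prescribed number $k$ of independent sets, essentially all of size exactly~$3$ (triangles in $\overline{G_\phi}$), that is, $\chi(G_\phi)=k=|V(G_\phi)|/3$ holds exactly when $\phi$ is satisfiable. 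Our first step is to recall this construction precisely and record the counting argument: any colouring with $k$ colours must have all classes of size~$3$.

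The second step is to show that in this situation every such $k$-colouring is automatically a fall colouring. Let $I$ be a colour class of size~$3$ and $w\notin I$. If $w$ had no neighbour in $I$, then $I\cup\{w\}$ would induce a $4P_1$, a contradiction. Hence every class of size~$3$ is a maximal independent set, i.e.\ dominating, and so a $k$-colouring with all classes of size $3$ is a fall $k$-colouring. Conversely, if $G_\phi$ has a fall colouring, then we must argue that it uses exactly $k$ colours and hence yields a satisfying assignment. Here we use Observation~\ref{o-known}: any fall colouring uses at least $\chi(G_\phi)\ge k$ colours, and we want to exclude fall colourings with more colours, i.e.\ ones using classes of size $1$ or~$2$. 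A class of size~$1$ or $2$ in a fall colouring must be a maximal independent set, so we need that $G_\phi$ has no dominating vertex and no non-adjacent pair $\{x,y\}$ dominating the rest of the graph (equivalently, every non-edge extends to an independent triple). We would verify this directly from the gadget structure; if it fails for some pairs, we plan to pad the construction (e.g.\ by adding vertices or a suitable co-component, as in the ``simple trick'' used in Lemmas~\ref{l-fallbip} and~\ref{l-line}) so that every maximal independent set has size exactly~$3$, while preserving $(C_5,2P_2,P_2+2P_1,4P_1)$-freeness; joining with a complete co-component is safe since these forbidden graphs have connected complements (except $P_2+2P_1$, whose complement is the paw, also connected, and $4P_1$, $2P_2$, $C_5$ similarly), so freeness is preserved under joins.

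Given this, $\mathcal{F}(G_\phi)$ is either empty or equal to $\{k\}$, and it is nonempty exactly when $\phi$ is satisfiable; thus computing either the fall chromatic or fall achromatic number decides $\phi$, proving \NP-hardness of both problems. The main obstacle we anticipate is the second direction: ruling out maximal independent sets of size $1$ or $2$ in the gadget graph, which requires a careful case check of non-adjacent pairs across clause and variable gadgets (including the optional ``missing'' edges at $c'(x)$). We would first try to show directly that each non-edge $xy$ has a common non-neighbour, using that each variable gadget $Q_x$ and clause gadget $G_c$ contains many vertices with few edges to other gadgets.
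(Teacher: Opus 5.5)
There is a genuine gap: the structure you attribute to the construction of~\cite{PPR19} is not what that construction gives, and your plan for the upper bound fails on it. In~\cite{PPR19} the graph $G$ has $5n$ vertices (clause paths $c(x)a_1^cc(y)a_2^cc(z)$ plus variable triangles), and the hard instance is $\overline{G}$, whose colour classes are cliques of $G$. The target is $k=\frac{7}{3}n$, not $|V|/3=\frac{5}{3}n$. Each $a$-type vertex has two non-adjacent neighbours, so it lies in no triangle of $G$, and any class containing it has size at most~$2$. An optimal colouring of $\overline{G}$ therefore has $2n$ classes of size~$2$ (each $a$-type vertex with one of its $c$-type neighbours) and only $\frac{n}{3}$ classes of size~$3$ (variable triangles).

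Your $4P_1$-argument thus covers only the size-$3$ classes. Your plan to show that $\overline{G}$ has no maximal independent set of size~$2$ (every non-edge extends to an independent triple) is false: a pair $\{a_i^c,c(x)\}$ is a non-edge of $\overline{G}$ with no common non-neighbour. These pairs are exactly the classes the intended fall colouring must use, so no padding that makes all maximal independent sets have size~$3$ can preserve the reduction. Joining with a clique would in any case add singleton maximal independent sets rather than remove small ones.

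Your overall skeleton is right: show ${\cal F}(\overline{G})$ is $\{k\}$ or $\emptyset$, using $\chi(\overline{G})\ge k$ and an upper bound on fall colourings. What is missing is the following.

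\textbf{Forward direction.} The size-$2$ classes $\{a_i^c,v\}$ are dominating in $\overline{G}$, because no vertex of $G$ is adjacent to both $a_i^c$ and $v$; that would create a triangle through $a_i^c$. The size-$3$ classes are dominating by $K_4$-freeness of $G$, as you say.

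\textbf{Upper bound.} Replace ``no small maximal independent sets'' by a counting argument. There are no singleton classes, since $G$ has no isolated vertex. Each $a$-type vertex is therefore paired with one of its two $c$-type neighbours, which uses up exactly two of the three $c$-type vertices of every clause gadget. The leftover $c$-type vertex lies in a class of $c$-type vertices, i.e.\ inside a variable triangle. A class of size~$2$ there is not dominating, since the third triangle vertex misses it in $\overline{G}$, so the class has size~$3$. Every fall colouring thus has exactly $2n+\frac{n}{3}=\frac{7}{3}n$ classes.

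Combined with $\chi(\overline{G})=\sigma(G)\ge\frac{7}{3}n$, with equality iff $\Phi$ is $1$-satisfiable~\cite{PPR19}, this gives ${\cal F}(\overline{G})=\{\frac{7}{3}n\}$ or $\emptyset$.

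A minor point: the complement of $P_2+2P_1$ is the diamond, not the paw. It is still connected, so your join remark is unaffected.
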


\begin{proof}
We reduce from the following problem.
Let $C$ be a set of clauses, each of which consists of three distinct positive literals from a variable set~$X$. Assume also that each variable of $X$ occurs in exactly three clauses (and consequently $|C|=|X|$). By putting the clauses in conjunctive normal form, we obtain a {\it $(3,3)$-monotone formula $\Phi$}. 
A {\it truth assignment} of $\Phi$ maps each variable of $X$ to either true or false.
We say that $\Phi$ is {\it $1$-satisfiable} if $\Phi$ has a truth assignment that satisfies each clause by exactly one variable, that is, maps exactly one variable of each clause to true.
The problem {\sc Monotone $1$-in-$3$-SAT}, also known as {\sc Positive $1$-in-$3$-SAT},
is to decide whether a given $(3,3)$-monotone formula is $1$-satisfiable.
Moore and Robson~\cite{MR01} proved that this problem is \NP-complete.

From a $(3,3)$-monotone formula $\Phi$ with clause set $C$ of size~$n$ and variable set $X$ of size~$n$, we construct the graph~$G$ from~\cite{PPR19} (see also Figure~\ref{fig:clause}):
\begin{itemize}
\item For each clause $c\in C$, the clause gadget $G_c=(V_c,E_c)$ is the $5$-vertex path $c(x)a_1^cc(y)a_2^cc(z)$, where $c(x),c(y),c(z)$ are {\it $c$-type} vertices corresponding to the three (positive) literals $x,y,z$ of $c$, which we ordered arbitrarily, and $a_1^c, a_2^c$ are {\it $a$-type} vertices. 
\item For each variable $x\in X$, the variable gadget $Q_{x}$ is the triangle on vertices $c(x),c'(x),c''(x)$, where $c,c',c''$ are the three clauses containing~$x$. \end{itemize}
As noted in~\cite{PPR19}, $\vert{V(G)}\vert=5n$ and moreover, $G$ is $(C_5,C_4,\overline{2P_1+P_2},K_4)$-free with $\omega(G)=3$. Hence, $\overline{G}$ is  $(C_5,2P_2,2P_1+P_2,4P_1)$-free.

We claim that $\Phi$ is $1$-satisfiable if and only if ${\cal F}(\overline{G})=\{\frac{7}{3}n\}$, which would prove the lemma. In order to show this statement we will combine some new arguments with Claims~1--3 from the proof of Theorem~5 in~\cite{PPR19}. We give these claims below in order, but we modified the statement of Claim~3 from~\cite{PPR19}, and therefore also give a proof of it.

\begin{claim}\label{cl-fall-1}
    There exists a minimum clique cover of $G$, in which each vertex $a_i^c$ ($c\in C$, $1\leq i\leq 2$) is covered by a clique of size~$2$, implying that $\sigma({G})\ge \frac{7}{3}n$.
\end{claim}

\begin{claim}\label{cl-fall-2}
    $\Phi$ is $1$-satisfiable if and only if $\sigma({G})=\frac{7}{3}n$.
\end{claim}

\begin{claim}\label{cl-fall-3}
    If $G$ has a clique cover ${\cal K}=\{K_1,\ldots,K_{\frac{7}{3}n}\}$, then each $K_i$ with only $c$-type vertices has size~$3$ and each $K_i$ that contains an $a$-type vertex has size~$2$.
\end{claim}
\begin{claimproof}
    We repeat the arguments of~\cite{PPR19}. By Claim~\ref{cl-fall-1}, we have that
    $\sigma({G})\ge\frac{7}{3}n$. As $\vert{\cal K}\vert=\frac{7}{3}n$, this means that ${\cal K}$ is a minimum clique cover.
    For $K_i\in \cal K$ and $v\in K_i$, we define the {\it weight} $w_v={1/\vert K_i\vert}$. Note that, as $\omega({G})= 3$, we have $w_v\in\{\frac{1}{3},\frac{1}{2},1\}$. Moreover, as shown in~\cite{PPR19}:
    \[\displaystyle\sum_{v\in V_c}w_v=\frac{7}{3}\; \mbox{for all}\; c\in C.\] 
    Every $a_i^c$ has exactly two neighbours and these neighbours are not adjacent. Hence, $w_{a_i^c}\in \{\frac{1}{2},1\}$ and every $K_i$ with an $a$-type vertex has size~$1$ or~$2$. However, as shown in~\cite{PPR19}, if some $a_i^c$ has $w_{a_i^c}=1$, then $\sum_{v\in V_c}w_v>\frac{7}{3}$, which contradicts the above. Therefore, each $K_i$ with an $a$-type vertex has size~$2$. 
    
    Now consider a clause $c\in C$. From the above, $w_{a_1^c}=w_{a_2^c}=\frac{1}{2}$. Hence, at least two vertices of $c(x),c(y),c(z)$ have weight $\frac{1}{2}$. As $\sum_{v\in V_c}w_v=\frac{7}{3}$, this means that the remaining third vertex of $c(x),c(y),c(z)$, which does not belong in a clique of ${\cal K}$ together with an $a$-type vertex, must have weight $\frac{1}{3}$. Hence, every $K_i$ with only $c$-type vertices has size~$3$.
\end{claimproof}

\noindent
We also prove a new claim.

\begin{claim}\label{cl-fall-4}
    $\overline{G}$ has no fall colouring with more than $\frac{7}{3}n$ colours.
\end{claim}
\begin{claimproof}
    For a contradiction, assume that $\overline{G}$ has a fall colouring $f$ with more than $\frac{7}{3}n$ colours. Let ${\cal K}_f$ be the corresponding clique cover of $G$ obtained from the colour classes of $f$. As every vertex in~$G$ has at least one neighbour in~$G$, no vertex of $\overline{G}$ is a dominating vertex of $\overline{G}$. 
    In a fall colouring, the single vertex of a singleton colour class must be dominating.
    As $f$ is a fall colouring, we therefore conclude that every colour class of~$f$, and thus every clique in ${\cal K}_f$, has size at least~$2$. 
    
    Now consider a clause $c\in C$. From the above, $a_1^c$ and $a_2^c$ belong, together with a vertex of $c(x),c(y),c(z)$, to two (different) cliques of ${\cal K}_f$ of size~$2$.
    Consider the remaining third vertex from $c(x),c(y),c(z)$, which belongs
    to a clique $K\in {\cal K}_f$ with only $c$-type vertices. If $K$ has size~$2$, then $K$ is properly contained in a triangle~$T$ of $c$-type vertices. However, we now find that in $\overline{G}$, the vertex of $V(T)\setminus K$ is not adjacent to any of the two vertices of $K$, which form a colour class of $f$. This means that $f$ is not a fall colouring of $\overline{G}$, a contradiction. Hence, $K$ must have size~$3$.
    
    From the above, we conclude that every clique of ${\cal K}_f$ with only $c$-type vertices has size~$3$, and that every clique of ${\cal K}_f$ with an $a$-type vertex has size~$2$.
    There are $2n$ $a$-type vertices in total, and each $a$-type vertex belongs to a clique in ${\cal K}_f$ with exacty one $c$-type vertex. Hence, there are $2n$ cliques in ${\cal K}_f$ with an $a$-type vertex. Consequently, there are $3n-2n=n$ remaining $c$-type vertices, which all belong to cliques in ${\cal K}_f$ that consist of three $c$-type vertices. Hence, the number of such cliques in ${\cal K}_f$ is $\frac{1}{3}n$. This means that ${\cal K}$ consists of $2n+\frac{1}{3}n=\frac{7}{3}n$ cliques in total, so $f$ is a fall colouring of $\overline{G}$ with only $\frac{7}{3}n$ colours, a contradiction.
\end{claimproof}

\noindent
We are now ready to prove that $\Phi$ is $1$-satisfiable if and only if ${\cal F}(\overline{G})=\{\frac{7}{3}n\}$.

\medskip
\noindent
First suppose that $\Phi$ is $1$-satisfiable. By Claim~\ref{cl-fall-2},  $\sigma({G})=\frac{7}{3}n$, implying that $G$ has a clique cover ${\cal K}=\{K_1,\ldots,K_{\frac{7}{3}n}\}$. By Claim~\ref{cl-fall-3}, 
each $K_i\in\cal K$ with only $c$-type vertices has size~$3$ and each $K_i$ that contains an $a$-type vertex has size~$2$. This means that no vertex of $V(G)$ is complete to a clique in ${\cal K}$.
Consequently, ${\cal K}$ corresponds to a fall colouring of $\overline{G}$, and thus $\frac{7}{3}n\in {\cal F}(\overline{G})$.

By Claim~\ref{cl-fall-1}, we have that $\chi(\overline{G})=\sigma({G})\ge\frac{7}{3}n$. By Observation~\ref{o-known}, this means that every integer smaller than $\frac{7}{3}n$ does not belong to ${\cal F}(\overline{G})$. By Claim~\ref{cl-fall-4}, the same holds for every integer greater than $\frac{7}{3}n$. Hence, ${\cal F}(\overline{G})=\{\frac{7}{3}n\}$.

\medskip
\noindent
Now suppose that $\Phi$ is not $1$-satisfiable. By Claims~\ref{cl-fall-1} and~\ref{cl-fall-2}, we find that $\chi(\overline{G})=\sigma({G})>\frac{7}{3}n$. By Observation~\ref{o-known}, every fall colouring of 
$\overline{G}$ must use at least $\chi(\overline{G})>\frac{7}{3}n$ colours. By Claim~\ref{cl-fall-4}, this means that ${\cal F}(\overline{G})=\emptyset$.  
\end{proof}

\noindent
We are now ready to show Theorem~\ref{t-fall}, which we restate below.

\tfall*

\begin{proof}
Let $H$ be a graph. If $H$ has an induced $C_3$,  we apply Lemma~\ref{l-fallbip}.
If $H$ has an induced $C_r$ for some $r\geq 4$, we apply Lemma~\ref{l-chordal}.
From now on, assume $H$ is $(C_3,C_4,\ldots)$-free, so $H$ is a forest.
If $H$ has an induced claw, then we apply Lemma~\ref{l-line}. Now also assume $H$ is claw-free. This means that $H$ is a linear forest.

If $H$ has at least four connected components, then $H$ contains an induced~$4P_1$ and we apply Lemma~\ref{l-hardnew}.
Suppose $H$ has exactly three connected components. If $H$ has an edge, then $H$ contains an induced $P_2+2P_1$, and we apply Lemma~\ref{l-hardnew} again. Otherwise $H=3P_1\ssi P_3+P_1$, meaning we can apply Lemma~\ref{l-p13}.

Now suppose $H$ has two connected components. Hence, as $H$ is a linear forest, $H=P_s+P_r$ for some $r,s$ with $r\leq s$. If $r\geq 2$, then $H$ has an induced $2P_2$, and we apply Lemma~\ref{l-hardnew}. Now assume $r=1$. If $s\geq 4$, then $P_2+2P_1\ssi P_4+P_1\ssi H$, and we apply Lemma~\ref{l-hardnew}. If $s\leq 3$, then $H\ssi P_3+P_1$, and we apply Lemma~\ref{l-p13}.

Finally, suppose $H$ is connected. Hence, as $H$ is a linear forest, $H=P_r$ for some $r\geq 1$. If $r\geq 5$, then $H$ has an induced $2P_2$, and we apply Lemma~\ref{l-hardnew}. If $r\leq 4$, then $H\ssi P_4$, and we apply Lemma~\ref{l-fallp4}.  
\end{proof}

\section{Conclusions}\label{s-con}

We extended the known results for {\sc b-Chromatic Number}, {\sc Tight Chromatic Number}, {\sc Fall Chromatic Number} and {\sc Fall Achromatic Number} with several new polynomial-time algorithms and hardness results for $H$-free graphs to obtain three complexity dichotomies and one partial complexity classification. Our results provided the first evidence that the computational complexities of {\sc b-Chromatic Number} and {\sc Tight b-Chromatic Number} may differ in $H$-free graphs, namely if $3P_1\ssi H\ssi 2P_2+P_1$ or $H=P_3+P_1$.
To obtain these new polynomial-time cases, we introduced the technique of tight b-precolouring extension.

As our main open problem, we ask about the computational complexity of {\sc Tight b-Chromatic Number} for $H$-free graphs for the remaining cases not covered by Theorem~\ref{t-tight}: 

\begin{open}\label{o-1}
Determine the complexity of {\sc Tight b-Chromatic Number} for $H$-free graphs if 
\begin{itemize}
\item $H=P_4+P_2+sP_1$ for some $s\geq 0$,
\item $H=P_4+sP_1$ for some $s\geq 1$, 
\item $H=P_3+P_2+sP_1$ for some $s\geq 0$,
\item $H=P_3+sP_1$ for some $s\geq 2$,
\item $H=2P_2+sP_1$ for some $s\geq 2$, 
\item $H=P_2+sP_1$ for some $s\geq 3$,
\item $H=sP_1$ for some $s\geq 4$.
\end{itemize}
\end{open}

\noindent
Due to our systematic approach, we identified the following open problem in particular:
does there exist an integer $s\geq 4$, such that {\sc Tight b-Chromatic Number} is \NP-complete on $sP_1$-free graphs, or equivalently, graphs of independence number at most $s-1$?

Graphs of bounded independence number form a set of well-studied graph classes, for which new complexity results continue to be found (see e.g.~\cite{FGSS,JK26}).
All known hardness gadgets~for {\sc Tight b-Chromatic Number} have arbitrarily large independent sets.
It also does not seem straightforward to adjust the hardness gadget for {\sc b-Chromatic Number} in co-bipartite graphs from~\cite{BSSV15} 
or the hardness gadget 
from Lemma~\ref{l-hardnew}.
Hence, new ideas are needed.

\bibliography{chrom}

\end{document}